\theoremstyle{definition}
\newtheorem{thm}{Theorem}
\newtheorem{lem}{Lemma}
\newtheorem{cor}{Corollary}
\newtheorem{prop}{Proposition}
\theoremstyle{remark}
\theoremstyle{remark}\newtheorem{rem}{Remark}
\theoremstyle{remark}\newtheorem{notat}{Notation}
\def\lra{\longrightarrow}
\def\n{\noindent}
\newcommand{\E}{\ensuremath{\mathbb E}}
\begin{document}

\title{Empirical Quantile CLTs for Time Dependent Data}

\date{ }
\author{James Kuelbs \\ Department of Mathematics\\   University of Wisconsin  \\ Madison, WI 53706-1388 \ \and Joel Zinn \\ Department of Mathematics \\ Texas A\&M University \\ College Station, TX 77843-3368 }

\maketitle

\begin{abstract}  We establish empirical quantile process CLTs based on $n$ independent copies of a stochastic process $\{X_t: t  \in E\}$ that are uniform in $t \in E$ and quantile levels $\alpha \in I$, where $I$ is a closed sub-interval of $(0,1)$. Typically $E=[0,T]$, or a finite product of such intervals. Also included are CLT's  for the empirical process based on $\{I_{X_t \le y} - \rm {Pr}(X_t \le y): t \in E, y \in R \}$ that are uniform in $t \in E, y \in R$. The process $\{X_t: t \in E\}$ may be chosen from a broad collection of Gaussian processes, compound Poisson processes, stationary independent increment stable processes, and martingales.
\end{abstract}

\section{Introduction}\label{sec1}

\indent 

Let  $X=\{X(t)\colon\ t \in E\}$ be a stochastic process with $P(X(\cdot) \in D(E))=1,$ where $E$ is a set and $D(E)$ is a collection of real valued 
functions on $E$. Also, let $\mathcal{C}=\{C_{s,x}\colon\  s \in E, x \in \mathbb{R}\},$ where $C_{s,x}=\{z \in D(E)\colon\  z(s) \le x\}, s \in E, x \in \mathbb{R}$. If  $\{X_{j}\}_{j=1}^{\infty}$ are i.i.d. copies of the stochastic process $X$ and $F(t,x):=P(X(t) \le x)=P( X(\cdot) \in C_{t,x})$, then the empirical distributions built on $\mathcal{C}$ (or built on the process $X$) are defined by 
\[
F_n(t,x)= \frac{1}{n} \sum_{i=1}^n I_{(-\infty, x]}(X_i(t))= \frac{1}{n}\sum_{i=1}^n I_{\{X_i \in C_{t,x}\}}, C_{t,x} \in \mathcal{C},
\]
and we say $X$ is the input process.

The empirical processes indexed by $\mathcal{C}$ (or just $E\times\mathbb{R})$ and built from the process, $X$, are given by
\[\nu_{n}(t,x):=\sqrt{n}\bigl(F_{n}(t,x)-F(t,x)\bigr).
\] 

In \cite{kkz} we studied the central limit theorem in this setting, that is, we found sufficient conditions for a pair $(\mathcal{C}, P)$, where $P$ is the law of $X$ on $D(E)$, ensuring that the sequence of empirical processes $\{\nu_n(t,x): (t,x) \in E\times \mathbb{R}\}, n \ge1,$
converge to a centered Gaussian process, $G=\{G_{t,x}\colon\  (t,x)\in E\times\mathbb{R}\}$ with covariance
$$
\E(G(s,x)G(t,y)) = \E([I(X_s \leq x)-P(X_s \leq x)][I(X_t \leq y) -P(X_t \leq y)])
$$
This requires that the law of $G$ on $\ell_{\infty}(E \times \mathbb{R})$ (with the usual sup-norm) be Radon, or equivalently, (see Example 1.5.10 in \cite{vw}), that $G$ has sample paths which are bounded and uniformly continuous on $E \times \mathbb{R}$ with respect to the psuedo-metric
\begin{equation}\label{eq1.1}
d((s,x),(t,y)) 
= \E([I(X_s \leq x)-I(X_t \leq y) -(P(X_s \leq x) - P(X_t \leq y))]^2)^{\frac{1}{2}}.
\end{equation}
It also requires that for every bounded, continuous $F\colon\  \ell_{\infty}(E\times \mathbb{R})\lra \mathbb{R}$, 
\[
\lim_{n\to\infty}\E^{*}F(\nu_{n})=\E F(G),
\]
where $\E^{*}$ denotes the upper expectation (see, e.g., p. 94 in \cite{Dudley-unif-clt}).

The quantiles and empirical quantiles are defined as the left-continuous inverses of $F(t,x)$ and $F_{n}(t,x)$ in the variable $x$, respectively: 
\begin{equation}\label{quantile} \tau_{\alpha}(t)=F^{-1}(t,\alpha)=\inf\{x\colon F(t,x)\ge \alpha\}
\end{equation}
and 
\begin{equation}\label{empirical quantile} \tau^{n}_{\alpha}(t)=F_{n}^{-1}(t,\alpha)=\inf\{x\colon F_{n}(t,x)\ge \alpha\}.
\end{equation}

The empirical quantile processes are defined as 
\begin{equation}\sqrt{n}\bigl(F_{n}^{-1}(t,\alpha)-F^{-1}(t,\alpha)\bigr)\notag,
\end{equation}
and we also use the more compact notation
\begin{equation*}\sqrt{n}\bigl( \tau^{n}_{\alpha}(t)- \tau_{\alpha}(t)\bigr),
\end{equation*}
for these processes. Since we are seeking limit theorems with non-degenerate Gaussian limits, it is appropriate to mention that for $\alpha \in (0,1)$ and $t$ fixed, that is, for a one-dimensional situation, a necessary condition for the weak convergence of 
\begin{equation}\label{alpha t fixed}\sqrt{n}\bigl(\tau_{\alpha}^{n}(t)-\tau_{\alpha}(t)\bigr)\Longrightarrow \xi,
\end{equation}
where $\xi$ has a strictly increasing, continuous distribution, is that the distribution function $F(t,\cdot)$ be differentiable at $\tau_{\alpha}(t)$ and $F'(t, \tau_{\alpha}(t))>0$. Hence $F(t,\cdot)$ is strictly increasing near $\tau_{\alpha}(t)$ as a function of $x$, and if we keep $t$ fixed, but ask that (\ref{alpha t fixed}) holds for all $\alpha \in (0,1)$, then $F(t,x)$ will be differentiable, with strictly positive derivative $F'(t,x)$ on the the set $J_t=\{x: 0< F(t,x)<1\}$. Moreover, by Theorem 8.21, p 168, of \cite{rudin}, if $F'(t,x)$ is locally in $L_1$ with respect to Lebesgue measure on $J_t$, then $F'(t,x)$ is the density of $F(t,\cdot)$ and it is strictly positive  on $J_t$. For many of the base processes we study here, $J_t = \mathbb{R}$ for all $t \in E$, but should that not be the case, it can always be arranged by adding an independent random variable $Z$ with strictly positive density to our base process in order to have a suitable input  process.  In p
 articular, the reader should consider a base process as one which, after possibly some modification, will be a suitable input process. At first glance perhaps this may seem like a convenient shortcut, but we know from \cite{kkz} that when $E=[0,T]$ and the base process is a fractional Brownian motion starting at zero when $t=0$, then the empirical CLT over $\mathcal{C}$ fails, but by adding $Z$ as indicated above it will hold.  In these cases adding $Z$ is just starting the process with $Z$. Hence a typical assumption throughout sections 2, 3, and 4 will be that the distributions $F(t,\cdot)$ are continuous and strictly increasing on $\mathbb{R}$, but in section 5 we show how to remove this assumption for fractional Brownian motions and symmetric stable processes with stationary independent increments  when $E=[0,T]$, and the processes start at zero at $t=0$.

In section \ref{vervaat} we extend a result of Vervaat  \cite{vervaat-quantiles} on the relation between empirical and quantile processes. Because of this relationship, when such a theorem is applied to empirical quantile processes built on a stochastic process, of necessity, the hypotheses includes the CLT for the empirical process as well as the existence of and conditions on the densities of $F(t,x)$. Hence, in order to prove empirical quantile CLT's for stochastic processes we will need the associated empirical CLT's. Some of these CLT's follow from the results in \cite{kkz}, but in section \ref{applications} we also show that several other classes of processes fall within the scope of those results. This includes the cases when the base process is a strictly stable process with stationary and independent increments, certain martingales, and even other independent increment processes. Section 4 then turns to the task of obtaining the empirical quantile process CLT's for t
 hese examples, and as mentioned above, section 5 looks at empirical quantile results for some important examples where one can get around the difficulty imposed when the input process starts zero at time zero. 
The results of section 5 were motivated by the CLT for the sample median of independent Brownian motions with value $0$ at $0$, a result of J. Swanson (\cite{swanson-scaled-median}), and we extend that result by proving a CLT for such empirical quantiles uniform, not only in the time parameter, but also uniform in all quantiles. Moreover, we do this for symmetric independent increment stable processes and fractional Brownian motions.

\section{Vervaat's Approach}\label{vervaat}

Throughout we assume the notation of section one. In particular, in this section we are assuming that for all $t \in E$, $F(t,x)$ is strictly increasing and continuous in $x \in \mathbb{R}$. Our goal in this section is to prove an analogue of Vervaat's Lemma 1 in \cite{vervaat-quantiles}. We follow Vervaat's idea of using an almost sure version of the empirical CLT.  
 
\begin{notat}For a function $f: S\lra \bar{\mathbb{R}}$ we use the notation $f^{*}$ to denote a measurable cover function (see Lemma 1.2.1 \cite{vw}). 
\end{notat}

An important result regarding weak convergence or convergence in law, in a general context, is that it also has a form allowing almost sure convergence. Such results have a long history, and here we use Theorem 3.5.1 in \cite{Dudley-unif-clt}, which is slightly more general.   

\begin{thm}[\cite{Dudley-unif-clt}]\label{thm2.1}
Let $(D,d_{\infty})$ be a metric space, $({\Omega}, \mathcal{A}, Q)$ be a probability space and $f_n\colon\  {\Omega} \rightarrow D$ for each $n=0,1,\cdots$. Suppose $f_0$ has separable range, $D_0$, and is measurable with respect to the Borel sigma algebra on $D_0$. Then $\{f_n\colon\  n\geq1\}$ converges weakly, or in law,  to $f_0$  iff there exists a probability space $(\widehat{\Omega}, \widehat{\mathcal{F}},\widehat{P})$ and perfect measurable functions $g_n$ from $(\widehat{\Omega},\widehat{\mathcal{F}})$ to $({\Omega}, \mathcal{A})$ for $n=0,1,\cdots$, such that
\begin{equation}
\label{distribution-perfect}
\widehat{P}\circ g_n^{-1} = Q \text{ on } \mathcal{A} 
\end{equation}
for each $n$, and
\begin{equation}\label{a.s. perfect}
d_{\infty}^{*}(f_n\circ g_n, f_0\circ g_0) \underset{\text{a.s}}{\rightarrow} 0. 
\end{equation}
where $d_{\infty}^{*}(f_n\circ g_n, f_0\circ g_0)$ denotes the measurable cover function for $d_{\infty}(f_n\circ g_n, f_0\circ g_0)$ and the $a.s.$ convergence is with respect to $\widehat P$.
\end{thm}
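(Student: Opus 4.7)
The statement is an iff, which I would split into the easy ``if'' direction and the substantive ``only if'' direction. For the ``if'' direction, given a bounded continuous $F\colon D\to\R$, the cover inequality $d_\infty\le d_\infty^*$ together with \eqref{a.s. perfect} gives $d_\infty(f_n\circ g_n, f_0\circ g_0)\to 0$ $\widehat P$-a.s., and continuity of $F$ yields $F(f_n\circ g_n)\to F(f_0\circ g_0)$ a.s. Bounded convergence, the identity $\widehat P\circ g_n^{-1}=Q$ from \eqref{distribution-perfect}, and the defining property of perfect maps (which forces $\widehat E^{*} F(f_n\circ g_n)=E^{*} F(f_n)$) then deliver $E^{*}F(f_n)\to E F(f_0)$, which is the notion of weak convergence in force.

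For the ``only if'' direction I would follow the Skorohod--Dudley--Wichura philosophy, exploiting that $f_0$ has separable range $D_0$. For each $k\ge 1$, cover $D_0$ by countably many balls of radius $1/k$ around a fixed countable dense subset and, by the standard shell/annulus trick, pass to a countable Borel partition $\{B_{k,j}\}_{j\ge 1}$ of $D_0$ with diameters less than $2/k$ and boundaries of $Q\circ f_0^{-1}$-measure zero. The Portmanteau theorem, read through upper expectations to accommodate non-Borel-measurability of $f_n$ in the possibly non-separable $D$, yields $Q^{*}(f_n\in B_{k,j})\to Q(f_0\in B_{k,j})$ for every fixed $k,j$.

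Next I would construct the coupling on $\widehat\Omega=\Omega\times[0,1]^{\N}$ with $\widehat P=Q\times\text{Leb}^{\N}$, setting $g_0(\omega,u)=\omega$ and using the auxiliary uniforms to randomize $g_n(\omega,u)$ cell by cell so that $f_n(g_n(\omega,u))$ lies in the same $B_{k,j}$ as $f_0(\omega)$ with probability close to one, while maintaining $\widehat P\circ g_n^{-1}=Q$. The cell-mass matching supplied by the Portmanteau step is precisely what makes this coupling feasible. Diagonalizing along a sufficiently slow sequence $k=k_n\uparrow\infty$ then produces $d_\infty^{*}(f_n\circ g_n,f_0\circ g_0)\to 0$ $\widehat P$-a.s.

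The main obstacle is not the discretization or the cell matching, which are classical, but upgrading the constructed $g_n$ to be \emph{perfect}. Perfectness is essential because the non-separable nature of $D$ forces the whole argument to be carried out through outer measures, and only perfect maps transport outer measures correctly between $\Omega$ and $\widehat\Omega$, so that the identity $E^{*}F(f_n)=\widehat E^{*}F(f_n\circ g_n)$ used in the easy direction is actually available in reverse. Realizing the couplings as genuinely measurable maps on an atomless extension (whence the $[0,1]^{\N}$ factor), rather than as abstract consistency statements on a product of marginals, is the delicate technical portion; for this I would appeal to the construction carried out in detail in Theorem~3.5.1 of \cite{Dudley-unif-clt}.
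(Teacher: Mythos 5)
The paper does not actually prove this statement: Theorem \ref{thm2.1} is quoted, with attribution, as Theorem 3.5.1 of \cite{Dudley-unif-clt}, so there is no in-paper argument to compare yours against. Your outline is the standard Skorohod--Dudley--Wichura representation argument and matches the strategy of the cited source.

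Two points deserve flagging. In the ``if'' direction, the step ``$F(f_n\circ g_n)\to F(f_0\circ g_0)$ a.s., hence bounded convergence applies'' is too quick: $F(f_n\circ g_n)$ need not be measurable, and pointwise convergence of a non-measurable sequence does not imply convergence of its measurable covers (a decreasing sequence of non-measurable sets of full outer measure with empty intersection gives a counterexample). The standard repair is to first verify convergence of $\E^{*}F(f_n)$ for bounded Lipschitz $F$, where the bound $|F(f_n\circ g_n)-F(f_0\circ g_0)|^{*}\le \big(\|F\|_{\mathrm{Lip}}\, d_{\infty}^{*}(f_n\circ g_n,f_0\circ g_0)\big)\wedge 2\|F\|_{\infty}$ ties the measurable cover of the increment directly to the hypothesis \eqref{a.s. perfect}, and then to pass to all bounded continuous $F$ via the Portmanteau theorem for outer expectations. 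In the ``only if'' direction your outline of the discretization, cell-mass matching, randomized coupling on $\Omega\times[0,1]^{\N}$, and diagonalization is faithful to the construction, but it ends by appealing to Theorem 3.5.1 of \cite{Dudley-unif-clt} for the perfectness of the $g_n$ --- that is, to the theorem being proved --- so as written it is a correct roadmap rather than a self-contained proof. Since the paper itself uses the result as a black box, that is an acceptable place to stop, but you should present the citation as constituting the proof rather than as a lemma inside your own.
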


In our setting the metric space $D$ is $\ell_{\infty}(E \times \mathbb{R})$, with distance $d_{\infty}$ the usual sup-norm there, and the probability space $(\Omega, \mathcal{A}, Q)$  supports the i.i.d.\ sequence $\{X_j\colon\  j \geq 1\}$ and the Gaussian process $G$. 
Then, for $\omega \in \Omega, n \geq 1,$ the $f_{n}$ of Dudley's result is our $\nu_{n}$,
\begin{equation}\label{old1.4}
f_n(\omega) = \sqrt n(F_n(\cdot,\cdot)(\omega)-F(\cdot,\cdot)) \in \ell_{\infty}(E \times R), 
\end{equation}
and $\{f_n\colon\  n\geq 1\}$ converges in law to
\begin{equation}\label{old1.5}
f_0(\omega)= G(\cdot,\cdot)(\omega) \in \ell_{\infty}(E \times R).
\end{equation}
That is, we are assuming the empirical  CLT over $\mathcal{C}$, and therefore 
Theorem \ref{thm2.1}
implies there is a suitable probability space $(\widehat{\Omega},\widehat{\mathcal{F}},\widehat{P})$ and a set
$\widehat{\Omega}_1 \subset \widehat{\Omega}$ with $P(\widehat{\Omega}_1)=1$  such that for all $\hat{\omega}\in\widehat{\Omega}_{1}$, 
\begin{equation}\label{old1.6}
|| (f_n \circ g_n)(\hat{\omega}) - (f_0 \circ g_0)(\hat{\omega})||^{*} \equiv (\sup_{t \in E, x  \in \mathbb{R}} |(f_n \circ g_n)(\hat{\omega})-(f_0 \circ g_0)(\hat{\omega})|)^{*}\rightarrow 0. 
\end{equation}
Hence, if
$$
\widehat F_n(t,x)(\hat{ \omega})= F_n(t,x)(g_n(\hat{\omega})) \text{ and } \widehat G(t,x)(\hat{\omega})=G(t,x)(g_0(\hat{\omega})), 
$$
then on $\widehat{\Omega_1}$ we have the empirical distribution functions $\{\widehat F_n\colon\  n\geq 1\}$ satisfying
\begin{equation}\label{old1.7}
\|\sqrt n( \widehat F_n- F) - \widehat G \|^{*} 
\equiv (\sup_{t \in E, x  \in \mathbb{R}} |\sqrt n(\widehat F_n(t,x)(\hat{ \omega}) -F(t,x))-\widehat G(t,x)(\hat{\omega})|)^{*}\rightarrow 0.
\end{equation}

\begin{rem}\label{rem1.1}
The functions $\widehat F_n$ are still distribution functions as functions of $x$, and on $\Omega$ we have $\sqrt n(\widehat F_n -F)-\widehat G \in \ell_{\infty}(E \times \mathbb{R})$. In addition, since the functions $\{g_n\colon\  n\geq 0\}$ are perfect and (\ref{distribution-perfect}) holds, it follows for every bounded, real valued function $h$ on $\ell_{\infty}(E\times  \mathbb{R})$, and $n\geq 1$, that
\begin{equation}\label{old1.8}
\E_{\widehat{P}}^{*}[h(\sqrt n(\widehat F_n-F))] = \E_Q^{*}[h(\sqrt n( F_n-F))] \text{ and } \E_{\widehat{P}}[h(\widehat G)] = \E_Q[h(G)]. 
\end{equation}
Since we are assuming $\{\sqrt n( F_n-F)\colon\  n \geq 1\}$ converges weakly to the Gaussian limit $G$, and $G$ has separable support in $\ell_{\infty}(E\times \mathbb{R})$, then \eqref{old1.8} immediately implies $\{\sqrt n(\widehat F_n-F)\colon\  n \geq 1\}$ also converges weakly to $G$.
\end{rem}

The generalized inverse of $\widehat F_n(t,\cdot$) in the second variable is given by
\begin{equation}\label{tau_{n} hat}
\widehat \tau_{\alpha}^n(t)\equiv \widehat F_n^{-1}(t,\alpha)=\inf\{x\colon\  \widehat F_n(t,x) \ge \alpha\},~t\in E, \alpha \in (0,1), n \ge1,
\end{equation}
and as before for each $t \in E, \alpha \in (0,1)$, the inverse function
\begin{equation}\label{tau}
\tau_{\alpha}(t)\equiv  F^{-1}(t,\alpha)=\inf\{x: F(t,x) \ge \alpha\}. 
\end{equation}
Of course, since we are assuming $F(t,x)$ is strictly increasing, this is a classical inverse function, and to emphasize that the inverse is only on the second variable we also will write $\widehat F_{n,t}^{-1}$ and $F_{t}^{-1}$ for these inverses. Then, for each $t \in E$ we have  $\widehat F_{n,t}^{-1}(\cdot)\colon\ [0,1] \overset{\text{into}}{\longrightarrow} \mathbb{R}$ and since $F_t$ is assumed continuous we have $F_{t}^{-1}(\cdot)\colon\ (0,1) \overset{\text{onto}}{\longrightarrow} \mathbb{R}$. It is also useful to define $F_t^{-1}(0)=-\infty, F_{t}(-\infty)= \widehat F_{n,t}(-\infty)=0, F_t^{-1}(1)=\infty, F_{t}(+\infty)= \widehat F_{n,t}(+\infty)=1$, and $\widehat G_t(-\infty)= \widehat G_t(+\infty)=0$. We also set  $\overline{\mathbb{R}}= \mathbb{R} \cup \{-\infty \} \cup \{ +\infty \}$.

To use (\ref{old1.7})  we will need the function $\widehat F_{n,t} \circ F_t^{-1}$ and its inverse, which is determined in the next lemma.

\begin{lem}\label{lem2.1}
For each $t \in E$
\begin{equation}\label{eq2.3}
(\widehat F_{n,t}\circ F_t^{-1})^{-1}= F_t \circ \widehat F_{n,t}^{-1} 
\end{equation}
where the inverses are defined as in (\ref{tau_{n} hat}) and (\ref{tau}).
\end{lem}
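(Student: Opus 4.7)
My plan is to verify the identity pointwise on $[0,1]$ by directly manipulating the infimum that defines the generalized inverse, exploiting the fact that $F_t$ is a continuous strictly increasing bijection from $\overline{\mathbb{R}}$ onto $[0,1]$ under the boundary conventions $F_t(\pm\infty)=0,1$.

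Fix $t\in E$ and set $H(\alpha):=\widehat F_{n,t}(F_t^{-1}(\alpha))$ for $\alpha\in[0,1]$. Since both $\widehat F_{n,t}$ and $F_t^{-1}$ are non-decreasing, $H$ is a non-decreasing function from $[0,1]$ into $[0,1]$, and by definition of generalized inverse,
\[
H^{-1}(\beta)=\inf\{\alpha\in[0,1]\colon H(\alpha)\ge \beta\}
=\inf\{\alpha\in[0,1]\colon \widehat F_{n,t}(F_t^{-1}(\alpha))\ge \beta\}.
\]
The first step is to perform the change of variable $x=F_t^{-1}(\alpha)$, equivalently $\alpha = F_t(x)$. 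Because $F_t\colon \overline{\mathbb{R}}\to [0,1]$ is a continuous strictly increasing bijection (including the extended endpoints), this is a genuine order-preserving bijection between $\{\alpha\in[0,1]\colon \widehat F_{n,t}(F_t^{-1}(\alpha))\ge \beta\}$ and $\{x\in\overline{\mathbb{R}}\colon \widehat F_{n,t}(x)\ge \beta\}$.

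The second step is to commute $F_t$ with the infimum. Since $F_t$ is continuous and strictly increasing on $\overline{\mathbb{R}}$, one has $F_t(\inf S)=\inf F_t(S)$ for every non-empty $S\subset\overline{\mathbb{R}}$ (using that $F_t$ is lower semicontinuous and order preserving, so the infimum is attained in the limit). Applying this with $S=\{x\colon \widehat F_{n,t}(x)\ge \beta\}$ gives
\[
H^{-1}(\beta)=F_t\!\left(\inf\{x\in\overline{\mathbb{R}}\colon \widehat F_{n,t}(x)\ge \beta\}\right)=F_t(\widehat F_{n,t}^{-1}(\beta)),
\]
which is the desired identity.

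The final step is a sanity check of the boundary values $\beta=0$ and $\beta=1$ using the conventions fixed just before the lemma ($F_t(-\infty)=0$, $\widehat F_{n,t}(-\infty)=0$, $F_t(+\infty)=1$, $\widehat F_{n,t}(+\infty)=1$); in both cases the two sides collapse to $0$ and $1$ respectively. The only potentially subtle point is the commutation of $F_t$ with the infimum, but the strict monotonicity and continuity of $F_t$ on $\overline{\mathbb{R}}$ make this immediate, so I expect no substantive obstacle.
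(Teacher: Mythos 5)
Your proof is correct and is essentially identical to the paper's: both rewrite the defining infimum via the substitution $\alpha=F_t(x)$ (a bijection because $F_t$ is continuous and strictly increasing onto $[0,1]$ with the stated endpoint conventions) and then pull $F_t$ through the infimum using its continuity and monotonicity. The boundary check at $\beta=0,1$ is a harmless addition; no gaps.
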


\begin{proof}
 For each $t \in E, \alpha \in [0,1]$, we have, since we are assuming $F_t(\cdot)$ is strictly increasing and continuous, that
\begin{align*}
(\widehat F_{n,t} \circ F_t^{-1})^{-1}(\alpha) &= \inf \{ \beta\colon\  \widehat F_{n,t} \circ F_{t}^{-1}(\beta) \geq \alpha \}\\ 
&= \inf \{F_t(x)\colon\  \widehat F_{n,t}(x) \geq \alpha \}\\
& = F_t(\inf \{x\colon\  \widehat F_{n,t}(x) \geq \alpha\})\\
& = (F_t\circ \widehat F_{n,t}^{-1})(\alpha).\qquad\qed
\end{align*}
\renewcommand{\qed}{}\end{proof}

The next lemma is our modification of Lemma 1 in \cite{vervaat-quantiles} applicable to the present situation. 

\begin{lem}\label{lem2.2}
Let $a_{n} \rightarrow 0$, assume that uniformly in  $t \in E$, $\widehat G_t(F_t^{-1}(\alpha))$ is a uniformly continuous function of $\alpha \in (0,1)$, 
 and
\begin{equation} \label{old 2.4}
\left(\sup_{t \in E, x \in \mathbb{R}}\left|\frac{\widehat F_{n,t}(x) - F_t(x)}{a_n} -\widehat G_t(x)\right|\right)^{*} \rightarrow 0 
\end{equation}
as $n$ tends to infinity. Then, setting $I_t(\alpha)= \alpha$ for $t \in E, \alpha \in [0,1]$, we have

\begin{equation}\label{old 2.5}
\left(\sup_{t \in E, \alpha \in [0,1]}\left|\frac{(\widehat F_{n,t} \circ F_t^{-1})(\alpha) - I_t(\alpha)}{a_n} -(\widehat G_t \circ F_t^{-1})(\alpha)\right|\right)^{*} \rightarrow 0, 
\end{equation}
and
\begin{equation}\label{old 2.6}
\left(\sup_{t \in E, u \in [0,1]}\left|\frac{(F_{t} \circ \widehat F_{n,t}^{-1})(u) - I_t(u)}{a_n} +(\widehat G_t \circ F_t^{-1}(u)\right|\right)^{*} \rightarrow 0.
\end{equation}
\end{lem}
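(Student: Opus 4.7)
My plan is to establish (\ref{old 2.5}) by a direct substitution in the hypothesis (\ref{old 2.4}), and then to derive (\ref{old 2.6}) from (\ref{old 2.5}) by a Vervaat-style inversion argument, exploiting the fact from Lemma \ref{lem2.1} that $(\widehat F_{n,t}\circ F_t^{-1})^{-1} = F_t\circ \widehat F_{n,t}^{-1}$, together with the assumed uniform continuity (in $\alpha$, uniformly in $t$) of $g_t(\alpha):=\widehat G_t(F_t^{-1}(\alpha))$.

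For (\ref{old 2.5}), observe that since $F_t$ is strictly increasing and continuous, $F_t(F_t^{-1}(\alpha)) = \alpha = I_t(\alpha)$ for every $\alpha \in (0,1)$, and as $\alpha$ ranges over $(0,1)$, $F_t^{-1}(\alpha)$ ranges over all of $\mathbb{R}$. Substituting $x=F_t^{-1}(\alpha)$ therefore rewrites the ratio in (\ref{old 2.4}) as $\tfrac{(\widehat F_{n,t}\circ F_t^{-1})(\alpha)-I_t(\alpha)}{a_n}-g_t(\alpha)$, so the supremum over $t\in E, \alpha\in(0,1)$ equals the supremum in (\ref{old 2.4}) over $t \in E, x \in \mathbb{R}$. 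The two endpoints $\alpha=0,1$ contribute $0$ thanks to the conventions $\widehat F_{n,t}(\pm\infty)=F_t(\pm\infty)\in\{0,1\}$ and $\widehat G_t(\pm\infty)=0$. The measurable-cover machinery is untouched, since we are just reparametrizing the supremum.

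For (\ref{old 2.6}), write $h_{n,t}:=\widehat F_{n,t}\circ F_t^{-1}$, so by Lemma \ref{lem2.1}, $h_{n,t}^{-1}=F_t\circ\widehat F_{n,t}^{-1}$. Set
\[
\eta_n \;:=\; \Bigl(\sup_{t\in E,\,\alpha\in[0,1]}\bigl|h_{n,t}(\alpha)-\alpha - a_n g_t(\alpha)\bigr|\Bigr)^{*}\!\big/\,a_n,
\]
which goes to zero a.s.\ by (\ref{old 2.5}). Fix $\hat\omega$ in the almost sure set, fix $u\in(0,1)$, and put $\alpha^{*}:=h_{n,t}^{-1}(u)$. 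Because $h_{n,t}$ is non-decreasing and right-continuous, $h_{n,t}(\alpha^{*})\ge u$, while for every $\beta<\alpha^{*}$, $h_{n,t}(\beta)<u$. Applying the bound coming from (\ref{old 2.5}) to $\alpha^{*}$ and to $\beta$, then letting $\beta\uparrow\alpha^{*}$ and using continuity of $g_t$, yields the two-sided estimate
\[
\bigl|\alpha^{*}-u + a_n g_t(\alpha^{*})\bigr| \;\le\; a_n\eta_n.
\]
Dividing by $a_n$ and then replacing $g_t(\alpha^{*})$ by $g_t(u)$ by means of the uniform continuity hypothesis (which gives a modulus of continuity $\omega$ on $[0,1]$ valid uniformly in $t$, since $|\alpha^{*}-u|\le a_n(\eta_n+\|\widehat G\|_\infty)\to 0$) gives
\[
\left|\frac{h_{n,t}^{-1}(u)-u}{a_n}+g_t(u)\right| \;\le\; \eta_n \;+\; \omega\!\bigl(a_n(\eta_n+\|\widehat G\|_\infty)\bigr),
\]
uniformly in $t\in E$ and $u\in(0,1)$. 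Since $\widehat G$ has bounded sample paths a.s.\ (its law on $\ell_\infty(E\times\mathbb R)$ being Radon), and $\eta_n\to 0$ a.s., the right-hand side tends to zero a.s.; the endpoints $u=0,1$ are handled by the same conventions as in Step 1 (at $u=0$ both sides vanish; at $u=1$ one uses that $\widehat F_{n,t}(\widehat F_{n,t}^{-1}(1))=1$ forces the argument to run symmetrically). Because the right-hand side is measurable and dominates the expression inside the supremum, the starred supremum on the left of (\ref{old 2.6}) is $\le$ this measurable bound, hence also converges to zero a.s.

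The main obstacle is the Vervaat inversion step itself: extracting a \emph{two-sided} control on $\alpha^{*}-u$ from a one-sided approximation of a right-continuous step function, and then doing so \emph{uniformly} in $t \in E$. The assumed uniform (in $t$) uniform continuity of $\widehat G_t\circ F_t^{-1}$ is exactly what lets us pass from $g_t(\alpha^{*})$ to $g_t(u)$ without losing uniformity; the measurability bookkeeping, while delicate, is routine once the pointwise bound above is in hand.
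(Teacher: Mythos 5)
Your proof is correct and follows essentially the same route as the paper's: you obtain (\ref{old 2.5}) by reparametrizing the supremum in (\ref{old 2.4}) via $x=F_t^{-1}(\alpha)$ and the $\pm\infty$ conventions, and you obtain (\ref{old 2.6}) by a Vervaat inversion resting on Lemma \ref{lem2.1} together with the uniform-in-$t$ uniform continuity and a.s.\ boundedness of $\widehat G_t\circ F_t^{-1}$, exactly the ingredients the paper uses. The only difference is presentational: where you derive the two-sided bound directly at $\alpha^{*}=h_{n,t}^{-1}(u)$ via monotonicity, right continuity, and the one-sided limit $\beta\uparrow\alpha^{*}$, the paper packages the identical estimate through the completed graphs $\Gamma_{n,t}$, $\Gamma_{n,t}^{-1}$ and their reflection property.
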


\begin{proof} 
Since we are assuming for each $t \in E$ that $F(t,\cdot)$ is strictly increasing and continuous on $\mathbb{R}$, it follows that $\{F_t^{-1}(\alpha)\colon\   \alpha \in (0,1)\}= \mathbb{R}$. Therefore, if one restricts $\alpha$ in (\ref{old 2.5}) to be in $(0,1)$, then (\ref{old 2.5}) follows immediately from (\ref{old 2.4}). To obtain (\ref{old 2.5}) for $\alpha=0$ and $\alpha=1$, then follows from the  conventions we made prior to the statement of the lemma involving $\pm \infty$.

To show (\ref{old 2.5}) implies (\ref{old 2.6}) we define for each $t \in E$ the completed graph of $(\widehat F_{n,t} \circ F_t^{-1})(\cdot)$  on $[0,1]$ to be given by 
$$
\Gamma_{n,t} = \{(\alpha,u)\colon\   \alpha \in [0,1], (\widehat F_{n,t}\circ F_t^{-1})(\alpha - 0) \leq u \leq  (\widehat F_{n,t} \circ F_t^{-1})(\alpha + 0) \}.
$$
Here $ (\widehat F_{n,t}\circ F_t^{-1})(\alpha \pm 0)$ denotes the left and right hand limits of $ (\widehat F_{n,t}\circ F_t^{-1})(\cdot)$ when $\alpha \in (0,1)$, and are given through the conventions above when $\alpha=0$ or 1, i.e.\ we understand the left and right hand limits at zero to both be zero, and the left and right hand limits at one to both be one.
Now (\ref{old 2.5}) implies that
\[
\lim_{n \rightarrow \infty} \left(\sup \left\{\left|\tfrac{ u - \alpha}{a_n} - (\widehat G_t \circ F_{t}^{-1})(\alpha)\right|\colon\  (t,\alpha) \in E \times [0,1], (\alpha,u) \in \Gamma_{n,t}\right\}\right)^{*}=0,
\]
which also implies that
\begin{equation} \label{old 2.7}
 \lim_{n \rightarrow \infty} \left(\sup \left\{\left|\tfrac{  \alpha - u}{a_n} + (\widehat G _t \circ F_{t}^{-1})(\alpha)\right|\colon\  (t,\alpha) \in E \times [0,1], (\alpha,u) \in \Gamma_{n,t}\right\}\right)^{*}=0. 
\end{equation}
For each $t \in E$ we set
\[
\Gamma_{n,t}^{-1} = \{(u,\alpha)\colon\    u \in [0,1], (F_{t} \circ \widehat F_{n,t}^{-1})(u-0) \leq \alpha \leq  (F_{t}\circ \widehat F_{n,t}^{-1})(u+0)\},
\]
where one can check that the left hand limit of $F_t \circ \widehat F_{n,t}^{-1}(\cdot)$ at zero is zero and we take the right hand limit at one to be one.

Then one can check that 
\begin{equation}\label{old 2.8}
(\alpha,u) \in \Gamma_{n,t} \text{ if and only if } (u,\alpha) \in \Gamma_{n,t}^{-1}.
\end{equation}
Moreover, (\ref{old 2.6}) is implied by
\begin{equation}\label{old 2.9}
 \lim_{n \rightarrow \infty} \left( \sup \left\{\left|\tfrac{  \alpha - u}{a_n} + (\widehat G _t \circ F_{t}^{-1})(u)\right|\colon\  (t,u) \in E \times [0,1], (u,\alpha) \in \Gamma_{n,t}^{-1}\right\}\right)^{*}=0,  
\end{equation}
and (\ref{old 2.7}) and (\ref{old 2.8}) implies (\ref{old 2.9}) provided we show
\begin{equation}\label{old 2.10}
 \lim_{n \rightarrow \infty} (\sup \{|(\widehat G _t \circ F_{t}^{-1})(u) - (\widehat G _t \circ F_{t}^{-1})(\alpha) |\colon\   t \in E, (u,\alpha) \in \Gamma_{n,t}^{-1}\})^{*}=0. 
\end{equation}

Since we are assuming the empirical CLT holds over $\mathcal{C}$ with Gaussian limit process $\{G(t,x): (t,x) \in E \times \mathbb{R}\}$, it follows that $G$ has a version which is sample uniformly continuous
on $E\times \mathbb{R}$ with respect to its $L_2$-distance, $d(\cdot,\cdot)$, given in (\ref{eq1.1}). This is a consequence of the addendum to Theorem 1.5.7 of \cite{vw}, p. 37.
When referring to $G$ we will mean this version. By the total boundedness of the distance, the associated space of uniformly continuous functions is separable in the uniform topology. This space is a closed subspace of  $\ell_{\infty}(E \times \mathbb{R})$, which then implies that this version of $G$ is measurable with respect to the Borel sets of $\ell_{\infty}(E \times \mathbb{R})$.
Using the definition of $\widehat{G}$ following (\ref{old1.6}), and (\ref{distribution-perfect}) with $n=0$, we
have the laws of $G$ and $\widehat{G}$ are equal on $\ell_{\infty}(E \times \mathbb{R})$. In particular, $\widehat{G}$ is also measurable to the Borel sets of $\ell_{\infty}(E\times \mathbb{R})$ with separable support there, it has the same covariance $G$ and its $L_2$-distance is $d$, and it is sample continuous on $(E \times \mathbb{R}, d) $ with $\widehat{P}$-probability one.

Now for each $t \in E$ and $\alpha, \beta \in (0,1)$ we have
\begin{equation}\label{old 2.11}
 d((t,F_t^{-1}(\alpha),(t,F_t^{-1}(\beta))= |\alpha - \beta| - |\alpha - \beta|^2.
\end{equation}
Thus for each $t \in E$ we have $d((t,F_t^{-1}(\alpha),(t,F_t^{-1}(\beta)) \rightarrow 0$ as $\alpha, \beta \rightarrow 0$ or $\alpha,\beta \rightarrow 1$. We also have
\begin{align*}
d((t,F_t^{-1}(\alpha),(t,F_t^{-1}(0)) &= \alpha -\alpha^2 \leq |\alpha-0|,\\
 d((t,F_t^{-1}(\alpha),(t,F_t^{-1}(1)) &= \alpha - \alpha^2 \leq |\alpha -1|,
\end{align*}
and
\[
 d((t,F_t^{-1}(0),(t,F_t^{-1}(1))=0,
\]
and hence the uniform continuity of $\widehat G$ along with $\widehat G(t, F_t^{-1}(0))\!=\! \widehat G(t, F_t^{-1}(1))$ $=0$ implies that uniformly in $t \in E$ we have $\widehat G_t(F_t^{-1}(\alpha))$ is uniformly continuous in $\alpha \in [0,1]$ with probability one. Moreover,  the process $\{\widehat G(t,x)\colon\  (t,x)$ $\in E \times \mathbb{R}\}$  has  separable support in $\ell_{\infty}(E\times \mathbb{R})$, and hence the upper cover used in (\ref{old 2.10}) is unnecessary as the function there is measurable. We also have with $\widehat{P}$-probability one that
\begin{equation}\label{old 2.12}
\sup_{t \in E, \alpha \in [0,1]} |\widehat G_t(F_t^{-1}(\alpha))| < \infty, 
\end{equation}
and hence $a_n$ converging to zero,  and  (\ref{old 2.7}) implies
\begin{equation}\label{old 2.13}
 \lim_{n \rightarrow \infty} \sup \{|u-\alpha|\colon\   t \in E, (\alpha, u) \in \Gamma_{n,t}\}=0. 
\end{equation}
Therefore, (\ref{old 2.10}) follows from (\ref{old 2.8}) and that uniformly in $t \in E$ we have  $\widehat G_t \circ F_t^{-1}(\alpha)$  uniformly continuous in $\alpha \in E  \times [0,1]$. Therefore, (\ref{old 2.10})  holds, and this implies (\ref{old 2.6}), so the lemma is proven.
\end{proof}

\subsection{Applying Lemma \ref{lem2.2} to an Empirical CLT}\label{sec2.1}

\indent 

 Assuming the empirical CLT holds over $\mathcal{C}$, the conclusions of Lemma~\ref{lem2.2} hold with $a_n= \frac{1}{\sqrt n}$, and we have proved the following lemma.

\begin{lem} \label{lem3.3}
For all $t \in E$ assume the distribution function $F(t,x)$ is strictly increasing and continuous
in $ x \in \mathbb{R}$, and that the CLT holds on $\mathcal{C}$ with limit $\{G(t,x)\colon\  (t,x) \in E \times \mathbb{R}\}$.
Then, with $\widehat{P}$-probability one, we have 
\begin{equation}\label{strongCLTeq}
\Big(\sup_{t \in E, \alpha \in[0,1]}|\sqrt n [(F_{t} \circ \widehat F_{n,t}^{-1})(\alpha) - I_t(\alpha)]+(\widehat G_t \circ F_t^{-1}(\alpha))|\Big)^{*} \rightarrow 0. 
\end{equation}
\end{lem}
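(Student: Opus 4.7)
The plan is to deduce Lemma \ref{lem3.3} as a direct application of Lemma \ref{lem2.2}, with the rate sequence specialized to $a_n = 1/\sqrt{n}$. There are really only two things to check: (i) that the uniform approximation hypothesis \eqref{old 2.4} holds almost surely for a suitable realization, and (ii) that uniformly in $t \in E$, the map $\alpha \mapsto \widehat G_t(F_t^{-1}(\alpha))$ is uniformly continuous on $(0,1)$. Once both are in hand, \eqref{old 2.6} of Lemma \ref{lem2.2} is exactly the conclusion \eqref{strongCLTeq}.

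To obtain (i), I would invoke Theorem \ref{thm2.1} applied to the i.i.d.\ sequence $\{X_j\}$ and its Gaussian limit $G$: since by hypothesis the empirical CLT over $\mathcal{C}$ holds with limit $G$, and $G$ has a version with separable support in $\ell_{\infty}(E \times \mathbb{R})$ (as noted after \eqref{old 2.11}), Dudley's representation produces the probability space $(\widehat\Omega, \widehat{\mathcal F}, \widehat P)$, the perfect maps $g_n$, and the versions $\widehat F_n$ and $\widehat G$ introduced after \eqref{old1.6}, so that \eqref{old1.7} is satisfied. Rewriting \eqref{old1.7} with $a_n = 1/\sqrt{n}$ gives precisely the hypothesis \eqref{old 2.4} of Lemma \ref{lem2.2}, on a $\widehat P$-full set.

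For (ii), the required uniform continuity has already been established inside the proof of Lemma \ref{lem2.2}: the pseudo-metric identity \eqref{old 2.11}, namely $d((t,F_t^{-1}(\alpha)),(t,F_t^{-1}(\beta))) = |\alpha-\beta| - |\alpha-\beta|^2$, together with the endpoint estimates and the sample uniform continuity of $\widehat G$ on $(E\times\mathbb{R},d)$ with $\widehat P$-probability one, force $\widehat G_t(F_t^{-1}(\alpha))$ to be uniformly continuous in $\alpha \in [0,1]$ uniformly in $t$, $\widehat P$-almost surely. Thus the hypothesis of Lemma \ref{lem2.2} is satisfied on the same almost-sure set on which (i) holds.

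The main (and essentially only) subtlety is bookkeeping: one must fix a single $\widehat P$-null set off which both the almost-sure empirical approximation and the almost-sure uniform continuity of $\widehat G_t \circ F_t^{-1}$ hold, so that Lemma \ref{lem2.2} can be invoked pathwise. On that full-measure set, \eqref{old 2.6} with $a_n = 1/\sqrt n$ is exactly \eqref{strongCLTeq}, completing the proof. No further estimates are needed; the work has been front-loaded into Theorem \ref{thm2.1} and Lemma \ref{lem2.2}.
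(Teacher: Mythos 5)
Your proposal is correct and follows exactly the paper's route: the paper derives Lemma \ref{lem3.3} in one line by applying Lemma \ref{lem2.2} with $a_n = 1/\sqrt{n}$, with hypothesis \eqref{old 2.4} supplied by the Dudley representation \eqref{old1.7} and the uniform continuity of $\widehat G_t \circ F_t^{-1}$ supplied by the argument around \eqref{old 2.11} inside the proof of Lemma \ref{lem2.2}. Your write-up just makes the null-set bookkeeping explicit, which the paper leaves implicit.
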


Up to this point we have only assumed that the distribution functions  $\{F_t(\cdot)\colon\  t \in E\}$ are continuous and strictly increasing on $\mathbb{R}$, and that the empirical processes satisfy the CLT over $\mathcal{C}$. Now we add the assumptions that 
these distribution functions 
have  densities $\{f(t,\cdot)\colon\  t \in E\}$  such that 
\begin{equation}\label{unif-cont-densities}
 \lim_{\delta\to 0}\sup_{t \in E} \sup_{|u-v|\le\delta}|f(t,u) -f(t,v)| =0, 
\end{equation}
and for every closed interval $I$ in $(0,1)$ there is an $\theta(I)>0$ such that 
\begin{equation}\label{inf-eq}
\inf_{t \in E,\alpha \in I,|x-\tau_{\alpha}(t)|\le \theta(I)} f(t,x) \equiv c_{I,\theta(I)}>0. 
\end{equation}

\begin{lem}\label{quantile-LLN}
Assume for all $t \in E$ that the distribution functions $F(t,x)$ are strictly increasing and continuous, and that their densities $f(t,\cdot)$ satisfy (\ref{unif-cont-densities}) and (\ref{inf-eq}). If the CLT holds on $\mathcal{C}$, then
for every closed subinterval $I$ of $(0,1)$ 
\begin{equation}\label{quant-LLN}
\lim_{n \rightarrow \infty} [\sup_{t \in E, \alpha \in I}|\widehat{\tau}_{\alpha}^n(t) - \tau_{\alpha}(t)|]^{*}=0.
\end{equation}
in $\widehat{P}$ probability.
\end{lem}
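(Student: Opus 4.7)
My plan is to deduce Lemma \ref{quantile-LLN} directly from Lemma \ref{lem3.3} by quantitatively inverting $F_t$ near each true quantile, using only the density lower bound (\ref{inf-eq}).

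The first step is to convert the $\sqrt n$-rate statement (\ref{strongCLTeq}) into an $o(1)$ bound on $(F_t \circ \widehat F_{n,t}^{-1})(\alpha) - \alpha$. The proof of Lemma \ref{lem2.2} already established that the Gaussian limit is sample bounded, so $B := \sup_{t \in E,\,\alpha \in [0,1]}|(\widehat G_t \circ F_t^{-1})(\alpha)|$ is finite with $\widehat P$-probability one. Dividing the expression inside the cover in (\ref{strongCLTeq}) by $\sqrt n$ and applying the elementary inequality $(f+g)^{*} \le f^{*} + |g|$ valid for measurable $g$, I obtain that, with $\widehat P$-probability one,
\begin{equation*}
M_n^{*} \;:=\; \Bigl(\sup_{t \in E,\,\alpha \in [0,1]} \bigl|(F_t \circ \widehat F_{n,t}^{-1})(\alpha) - \alpha\bigr|\Bigr)^{*} \;\longrightarrow\; 0.
\end{equation*}

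The second, deterministic step turns this into control of $\widehat F_{n,t}^{-1}(\alpha)$ itself. Fix a closed subinterval $I \subset (0,1)$ and set $c := c_{I,\theta(I)}$ and $\theta := \theta(I)$. For any $t \in E$, $\alpha \in I$, and $y \in \mathbb{R}$ with $|y - \tau_\alpha(t)| \le \theta$, the fundamental theorem of calculus together with (\ref{inf-eq}) gives
\begin{equation*}
|F_t(y) - \alpha| \;=\; \Bigl|\int_{\tau_\alpha(t)}^{y} f(t,s)\,ds\Bigr| \;\ge\; c\,|y - \tau_\alpha(t)|.
\end{equation*}
Applying this at $y = \tau_\alpha(t) \pm \theta$ and using strict monotonicity of $F_t$ shows that $|F_t(y) - \alpha| < c\theta$ already forces $|y - \tau_\alpha(t)| < \theta$; both observations combine, with $y = \widehat F_{n,t}^{-1}(\alpha)$, into the deterministic implication
\begin{equation*}
M_n(\hat\omega) < c\theta \;\;\Longrightarrow\;\; \sup_{t \in E,\,\alpha \in I} \bigl|\widehat{\tau}_\alpha^n(t)(\hat\omega) - \tau_\alpha(t)\bigr| \;\le\; \frac{M_n(\hat\omega)}{c}.
\end{equation*}

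The final step is to reconcile this pointwise statement with measurable covers. Setting $H_n := \sup_{t \in E,\,\alpha \in I}|\widehat{\tau}_\alpha^n(t) - \tau_\alpha(t)|$, on the measurable event $\{M_n^{*} < c\theta\}$ one has $M_n \le M_n^{*} < c\theta$, so the implication above yields $H_n \le M_n^{*}/c$; since $M_n^{*}/c$ is measurable this lifts to $H_n^{*} \le M_n^{*}/c$ on that event. For any $\eta > 0$, choosing $\varepsilon := c\min(\eta,\theta)$ gives $\{M_n^{*} < \varepsilon\} \subset \{H_n^{*} < \eta\}$, and therefore
\begin{equation*}
\widehat P(H_n^{*} \ge \eta) \;\le\; \widehat P(M_n^{*} \ge \varepsilon) \;\longrightarrow\; 0
\end{equation*}
by the first step. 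The main bookkeeping nuisance is this cover argument — the analytic content is the elementary density-based inversion above — and I note in passing that no use is made of the uniform continuity assumption (\ref{unif-cont-densities}), only of the positivity (\ref{inf-eq}).
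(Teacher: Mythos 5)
Your proof is correct, but it takes a genuinely different route from the paper's. The paper does not touch Lemma~\ref{lem3.3} at all here: it invokes the Glivenko--Cantelli consequence of the Donsker property (Lemma~2.10.14 of \cite{vw}) to make $\sup_{t,x}|\widehat F_n(t,x)-F(t,x)|$ small outside an event $B_n$ of small probability, and then runs the classical quantile-consistency argument ($F(t,\tau_\alpha(t)\pm\epsilon)$ separated from $\alpha$ by $\delta$, via (\ref{inf-eq})) on $B_n^c$. You instead bootstrap from the already-proved (\ref{strongCLTeq}): dividing by $\sqrt n$ and peeling off the a.s.\ finite Gaussian supremum gives $\sup_{t,\alpha}|(F_t\circ\widehat F_{n,t}^{-1})(\alpha)-\alpha|\to 0$ outer almost surely, and the density lower bound (\ref{inf-eq}) inverts this into $|\widehat\tau^n_\alpha(t)-\tau_\alpha(t)|\le M_n/c$ once $M_n<c\theta$; your handling of the measurable covers (bounding $H_n$ by the measurable function $M_n^*/c$ on the measurable event $\{M_n^*<c\theta\}$) is sound. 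Both proofs use only (\ref{inf-eq}) and not (\ref{unif-cont-densities}), as you observe (the paper cites both but its derivation of (\ref{emp-LLN-3})--(\ref{emp-LLN-4}) likewise uses only the density lower bound). What each buys: the paper's route is self-contained at the level of the empirical distribution functions and needs only the (weaker) Glivenko--Cantelli property; yours is shorter given that Lemma~\ref{lem3.3} sits immediately above, yields outer almost sure rather than in-probability convergence, and in fact delivers the rate $O_{\widehat P}(n^{-1/2})$ for $\sup_{t,\alpha\in I}|\widehat\tau^n_\alpha(t)-\tau_\alpha(t)|$, anticipating part of the work of Theorem~\ref{density top}. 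One small point of hygiene: since the endpoint conventions at $\alpha\in\{0,1\}$ in (\ref{strongCLTeq}) are delicate, it is cleaner to define $M_n$ with the supremum restricted to $\alpha\in I$, which is all your argument uses.
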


\begin{proof} Since we are assuming (\ref{inf-eq}), fix $I$ a closed subinterval of $(0,1)$, and take $0<\epsilon<\theta(I)$. Let
\begin{equation}\label{quant-est-LLN}
A_n = \{ [\sup_{t \in E, \alpha \in I}|\widehat{\tau}_{\alpha}^n(t)-\tau_{\alpha}(t)|]^{*}>\epsilon\},
\end{equation}
and
\begin{equation}\label{emp-est-LLN}
B_n = \{ [\sup_{t \in E, x \in \mathbb{R}}|\widehat{F}_n(t,x)-F(t,x)|]^{*}>\delta\},
\end{equation}
where $0< \delta< \delta(\epsilon) \le \epsilon c_{I, \theta(I)}/2$. Then, since we have the CLT over $\mathcal{C}$ with respect to $\widehat{P}$,  Lemma 2.10.14 on page 194 of \cite{vw} implies there exists $n_{\delta}<\infty$ such that $n \ge n_{\delta}$ implies
\begin{equation}\label{B_n}
\widehat{P}(B_n) <\epsilon.
\end{equation}
In addition, (\ref{unif-cont-densities}) and (\ref{inf-eq}) imply we also have
\begin{equation}\label{emp-LLN-3}
\sup_{\alpha \in I, t \in E}F(t, \tau_{\alpha}(t)- \epsilon) < \alpha - \delta, 
\end{equation}
and 
\begin{equation}\label{emp-LLN-4}
\inf_{\alpha \in I, t \in E}F(t, \tau_{\alpha}(t) + \epsilon) > \alpha + \delta.
\end{equation}
That is, (\ref{emp-LLN-3}) holds by (\ref{inf-eq}) since
$$
\sup_{t \in E,\alpha \in I}F(t, \tau_{\alpha}(t)- \epsilon) \le \alpha - \inf_{t \in E,\alpha \in I}\int_{ \tau_{\alpha}(t)- \epsilon}^{\tau_{\alpha}(t)} f(t,x)dx,
$$
and if $\delta=\delta(\epsilon)\le \frac{\epsilon c_{I,\theta(I)}}{2}, 0<\epsilon \le \theta(I),$ we then have
$$
\inf_{t \in E, \alpha \in I}\int_{\tau_{\alpha}(t)- \epsilon}^{\tau_{\alpha}(t)} f(t,x)dx \ge \epsilon c_{I,\theta(I)}>\delta.
$$
Thus on $B_n^c$, for all $t \in E, \alpha \in I$,
$$
F(t, \widehat{ \tau}_{\alpha}^n(t)) \ge \widehat{F}_n(t,\widehat{\tau}_{\alpha}^n(t))- \delta  \ge \alpha -\delta,
$$
where the second inequality follows by definition of $\widehat{\tau}_{\alpha}^n(t)$. Combined with (\ref{emp-LLN-3}), on $B_n^c$ this implies that for all $t\in E,$ all $\alpha \in I$ 
\begin{equation}\label{LLN-ineq-1}
\widehat{\tau}_{\alpha}^n(t) \ge \tau_{\alpha}(t) - \epsilon.  
\end{equation}

Similarly, (\ref{emp-LLN-4}) holds by (\ref{inf-eq}) since
$$
\inf_{\alpha \in I, t \in E}F(t, \tau_{\alpha}(t) + \epsilon) = \alpha + \inf_{\alpha \in I, t \in E}\int_{ \tau_{\alpha}(t)}^{\tau_{\alpha}(t) + \epsilon} f(t,x)dx > \alpha +\delta,
$$
and if $\delta=\delta(\epsilon)\le \frac{\epsilon c_{I,\theta(I)}}{2}, 0<\epsilon \le \theta(I),$ we then have
$$
\inf_{\alpha \in I, t \in E}\int_{\tau_{\alpha}(t)}^{\tau_{\alpha}(t)+ \epsilon} f(t,x)dx \ge \epsilon c_{I,\theta(I)}>\delta.
$$
Thus for $ x < \widehat{\tau}_{\alpha}^n(t)$ and all $t\in E,\alpha \in I,$ on $B_n^c$ we have
$$
F(t,x) \le \widehat{F}_n(t,x) + \delta \le \alpha + \delta < F(t, \tau_{\alpha}(t) + \epsilon),
$$
where the first inequality follows from the definition of $B_n^c$, the second by definition of $\widehat{\tau}_{\alpha}^n(t)$ and that $x <\widehat{\tau}_{\alpha}^n(t)$, and the third by (\ref{emp-LLN-4}).
Thus
$\tau_{\alpha}(t) + \epsilon >x$ for all $ x < \widehat{\tau}_{\alpha}^n(t)$, and on $B_n^c$ we have
$$
\tau_{\alpha}(t) + \epsilon \ge  \widehat{\tau}_{\alpha}^n(t)
$$
for all $t \in E, \alpha \in I$. Combining this with (\ref{LLN-ineq-1}), on $B_n^c$  we have for all $t\in E,$ all $\alpha \in I,$ that
\begin{equation}\label{LLN-ineq-2}
\tau_{\alpha}(t)- \epsilon \le \widehat{\tau}_{\alpha}^n(t)  \le \tau_{\alpha}(t) +\epsilon.
\end{equation}
Hence on $B_n^c$
$$
 [\sup_{t \in E, \alpha \in I}|\widehat{\tau}_{\alpha}^n(t)-\tau_{\alpha}(t)|]^{*} \le \epsilon,
 $$
and on $B_n$ it is certainly bounded by $\infty$. Since $B_n$ is measurable, we thus have for $n \ge n_{\delta}$ that
$$
\widehat{P}( [\sup_{t \in E, \alpha \in I}|\widehat{\tau}_{\alpha}^n(t)-\tau_{\alpha}(t)|]^{*} > \epsilon) \le\widehat{P}(B_n) \le \epsilon.
$$
Since $\epsilon>0$ can be taken arbitrarily small, letting $n \rightarrow \infty$ implies (\ref{quant-LLN}). 
Thus the lemma is proven.
\end{proof}

\begin{thm}\label{density top}
Assume for all $t \in E$ that the distribution functions $F(t,x)$ are strictly increasing, their densities $f(t,\cdot)$ satisfy (\ref{unif-cont-densities}) and (\ref{inf-eq}), and the CLT holds on $\mathcal{C}$ with limit $\{G(t,x)\colon\  (t,x) \in E \times \mathbb{R}\}$.
Then, 
for $I$ a closed subinterval of $(0,1)$ we have
\begin{equation}\label{prob-CLT}
 \Big(\sup_{ t \in E, \alpha \in I}|\sqrt n (\widehat \tau_{\alpha}^n(t)- \tau_{\alpha}(t)) f(t, \tau_{\alpha}(t)) + \widehat G(t, \tau_{\alpha}(t)|\Big)^{*} \rightarrow 0 
\end{equation}
in $\widehat{P}$-probability, and therefore the quantile processes $\{\sqrt n (\widehat \tau_{\alpha}^n(t)- \tau_{\alpha}(t))$ $f(t, \tau_{\alpha}(t))\colon\  n \geq 1\}$ satisfy the CLT in $\ell_{\infty}(E \times I)$ with Gaussian limit process $\{ \widehat G(t,\tau_{\alpha}(t))\colon\  (t,\alpha) \in E \times I \}$. Moreover, the quantile processes $\{\sqrt n ( \widehat{\tau}_{\alpha}^n(t)- \tau_{\alpha}(t))\colon\  n \geq 1\}$ also satisfy the CLT in $\ell_{\infty}(E \times I)$ with Gaussian limit process $\{\frac{\widehat{G}(t,\tau_{\alpha}(t))}{ f(t, \tau_{\alpha}(t))}\colon\  (t,\alpha) \in E \times I \}$.
\end{thm}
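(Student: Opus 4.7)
The plan is to linearize $F(t,\widehat\tau_\alpha^n(t))-F(t,\tau_\alpha(t))$ around $\tau_\alpha(t)$ and combine this linearization with the strong approximation from Lemma~\ref{lem3.3}. Since $I_t(\alpha)=\alpha=F(t,\tau_\alpha(t))$ and $(F_t\circ\widehat F_{n,t}^{-1})(\alpha)=F(t,\widehat\tau_\alpha^n(t))$, Lemma~\ref{lem3.3} immediately rewrites as
\begin{equation*}
\Big(\sup_{t\in E,\alpha\in[0,1]}\big|\sqrt{n}[F(t,\widehat\tau_\alpha^n(t))-F(t,\tau_\alpha(t))]+\widehat G(t,\tau_\alpha(t))\big|\Big)^{*}\to 0
\end{equation*}
with $\widehat P$-probability one, and this serves as the main input. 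The fundamental theorem of calculus then yields
\begin{equation*}
F(t,\widehat\tau_\alpha^n(t))-F(t,\tau_\alpha(t))=f(t,\tau_\alpha(t))(\widehat\tau_\alpha^n(t)-\tau_\alpha(t))+R_n(t,\alpha),
\end{equation*}
where $R_n(t,\alpha)=\int_{\tau_\alpha(t)}^{\widehat\tau_\alpha^n(t)}[f(t,x)-f(t,\tau_\alpha(t))]\,dx$ is controlled by $|R_n(t,\alpha)|\le \omega_f(|\widehat\tau_\alpha^n(t)-\tau_\alpha(t)|)\cdot|\widehat\tau_\alpha^n(t)-\tau_\alpha(t)|$, where $\omega_f(\delta)=\sup_{t\in E,|u-v|\le\delta}|f(t,u)-f(t,v)|\to 0$ as $\delta\to 0$ by (\ref{unif-cont-densities}). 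Thus (\ref{prob-CLT}) will follow once we show $\sqrt n\sup_{t,\alpha\in I}|R_n(t,\alpha)|^{*}\to 0$ in $\widehat P$-probability.

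The main technical step is a uniform $O_P(1/\sqrt n)$ rate for $\widehat\tau_\alpha^n(t)-\tau_\alpha(t)$ over $E\times I$. On the event $\{\sup_{t\in E,\alpha\in I}|\widehat\tau_\alpha^n(t)-\tau_\alpha(t)|^{*}\le\theta(I)\}$, which by Lemma~\ref{quantile-LLN} has $\widehat P$-probability tending to one, (\ref{inf-eq}) applied to the integral representation gives
\begin{equation*}
|F(t,\widehat\tau_\alpha^n(t))-F(t,\tau_\alpha(t))|\ge c_{I,\theta(I)}\,|\widehat\tau_\alpha^n(t)-\tau_\alpha(t)|,
\end{equation*}
so $\sqrt n|\widehat\tau_\alpha^n(t)-\tau_\alpha(t)|$ is dominated by $c_{I,\theta(I)}^{-1}\sqrt n|F(t,\widehat\tau_\alpha^n(t))-\alpha|$, which by Lemma~\ref{lem3.3} together with the sample boundedness of $\widehat G\circ F^{-1}$ on $E\times[0,1]$ (established inside the proof of Lemma~\ref{lem2.2}) is uniformly bounded in outer probability. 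Combining this with $\omega_f(\sup|\widehat\tau_\alpha^n(t)-\tau_\alpha(t)|^{*})\to 0$ in probability, we obtain $\sqrt n|R_n|^{*}\to 0$ uniformly in outer probability, and substituting back into the linearization yields (\ref{prob-CLT}).

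For the CLT conclusions, the limit process $\{\widehat G(t,\tau_\alpha(t))\colon(t,\alpha)\in E\times I\}$ is sample uniformly continuous on $E\times I$ because of the pseudo-metric identity $d((t,F_t^{-1}(\alpha)),(t,F_t^{-1}(\beta)))=|\alpha-\beta|-|\alpha-\beta|^2$ derived in the proof of Lemma~\ref{lem2.2}, and is therefore Radon on $\ell_\infty(E\times I)$; convergence in outer probability to a Radon limit is equivalent to the CLT in $\ell_\infty(E\times I)$, which gives the first assertion. The second CLT follows by noting that division by $f(t,\tau_\alpha(t))$ is a bounded continuous linear operation on $\ell_\infty(E\times I)$ since $f(t,\tau_\alpha(t))\ge c_{I,\theta(I)}>0$ on $E\times I$ by (\ref{inf-eq}), so the weak limit transfers to $\{\sqrt n(\widehat\tau_\alpha^n(t)-\tau_\alpha(t))\colon n\ge 1\}$ with Gaussian limit $\widehat G(t,\tau_\alpha(t))/f(t,\tau_\alpha(t))$. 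The principal obstacle throughout is the disciplined handling of outer probabilities and measurable cover functions, since the quantile processes and their suprema need not be Borel measurable.
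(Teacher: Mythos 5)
Your proposal is correct and follows essentially the same route as the paper: both proofs feed the transported strong approximation of Lemma~\ref{lem3.3} into a Taylor expansion of $F(t,\cdot)$ at $\tau_\alpha(t)$, bound the remainder by the uniform modulus of continuity of the densities together with the consistency from Lemma~\ref{quantile-LLN}, extract the uniform $O_{\widehat P}(1/\sqrt n)$ rate from the density lower bound (\ref{inf-eq}), and then pass to the second CLT by dividing by $f(t,\tau_\alpha(t))$, which is bounded away from zero. The only cosmetic differences are your use of the integral form of the remainder in place of the paper's mean value theorem, and that you could note explicitly that the symmetry of the centered Gaussian process is what lets you replace the limit $-\widehat G(t,\tau_\alpha(t))$ by $\widehat G(t,\tau_\alpha(t))$ in the stated CLT.
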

\begin{proof} 
Applying Theorem 3.6.1 of \cite{Dudley-unif-clt}, the first CLT asserted follows immediately from (\ref{prob-CLT}). Hence we next turn to the proof of (\ref{prob-CLT}).

First we observe that under the given assumptions, we have (\ref{strongCLTeq}) holding. Furthermore, since the densities are assumed continuous, 
\begin{equation}\label{mvt-R}
F(t,y) -F(t,x) = f(t,x)(y-x) + R(t,x,y) (y-x), 
\end{equation}
where $R(t,x,y)=f(t,\xi(t)) - f(t,x)$, and $\xi(t)$ between $x$ and $y$ is determined by the mean value theorem applied to $F(t,\cdot)$. Of course, $R(t,\cdot,\cdot)$ depends on $F(t,\cdot)$, but we suppress that, and simply note that since $\xi(t)$ is between $x$ and $y$,
\begin{equation}\label{R-est}
|R(t,x,y)| \leq \sup_{u\in[x,y]\cup [y,x]} |f(t,u) -f(t,x)|. 
\end{equation}
Therefore, for $M>0$ we have
$$
\widehat{P}([\sup_{t \in E, \alpha \in I}|\sqrt n(\widehat{\tau}_{\alpha}^n(t) - \tau_{\alpha}(t))|\frac{f(t,\tau_{\alpha}(t))}{2}]^{*} \ge M) \le a_n(M) +b_n,
$$
where
\begin{align*}
a_n(M)&=\widehat{P}(A_{n,1} \cap A_{n,2}),\\
A_{n,1}&= \{[\sup_{t \in E, \alpha \in I}|\sqrt n(\widehat{\tau}_{\alpha}^n(t) - \tau_{\alpha}(t))|(f(t,\tau_{\alpha}(t))+ R(t,\widehat{\tau}_{\alpha}^n(t),\tau_{\alpha}(t)))]^{*} \ge M\},\\
A_{n,2}&=\{ [\sup_{t \in E, \alpha \in I}|R(t,\widehat{\tau}_{\alpha}^n(t),\tau_{\alpha}(t))|]^{*} \le \frac{ c_{I,\theta(I)}}{2}\},\\
b_n &=P([\sup_{t \in E, \alpha \in I}|R(t,\widehat{\tau}_{\alpha}^n(t),\tau_{\alpha}(t))|]^{*} > \frac{ c_{I,\theta(I)}}{2}),
\end{align*}
and 
$c_{I,\theta(I)}>0$ is given as in (\ref{inf-eq}).
Thus
$$
\widehat{P}([\sup_{t \in E, \alpha \in I}|\sqrt n(\widehat{\tau}_{\alpha}^n(t) - \tau_{\alpha}(t))|\frac{f(t,\tau_{\alpha}(t))}{2}]^{*} \ge M) \le P(A_{n,1})+b_n,
$$
and by (\ref{mvt-R}) we also have 
$$
A_{n,1}=\{[\sup_{t \in E, \alpha \in I}|\sqrt n(F_t(\widehat{\tau}_{\alpha}^n(t)) - F_t(\tau_{\alpha}(t)))|]^{*} \ge M\},
$$
which implies
\begin{align}\label{ineq-1}
&\widehat{P}([\sup_{t \in E, \alpha \in I}|\sqrt n(\widehat{\tau}_{\alpha}^n(t) - \tau_{\alpha}(t))|\frac{f(t,\tau_{\alpha}(t))}{2}]^{*} \ge M)\\
\qquad \le~ &\widehat{P}([\sup_{t \in E, \alpha \in I}|\sqrt n(F_t(\widehat{\tau}_{\alpha}^n(t)) - F_t(\tau_{\alpha}(t)))|]^{*} \ge M)+b_n.\notag
\end{align}

Since $I_t(\alpha)= F_t(F_t^{-1}(\alpha)), \alpha \in (0,1),t \in E,$ and $I \subseteq (0,1)$
\begin{align}\label{ineq-2}
&[\sup_{t \in E, \alpha \in I}|\sqrt n(F_t(\widehat{\tau}_{\alpha}^n(t)) - F_t(\tau_{\alpha}(t)))|]^{*}\\
\le~ &[\sup_{t \in E, \alpha \in[0,1]}|\sqrt n [(F_{t} \circ \widehat F_{n,t}^{-1})(\alpha) - I_t(\alpha)]+(\widehat G_t \circ F_t^{-1}(\alpha))|]^{*}\notag\\
 +~ &[\sup_{t\in E,\alpha \in I}|\widehat G_t \circ F_t^{-1}(\alpha))|]^{*}, \notag
\end{align}
and since the process $\{\widehat{G}(t,x):t \in E, x \in \mathbb{R}\}$ is sample continuous on $E\times \mathbb{R}$ in the semi-metric $d$ given in (\ref{eq1.1}) with Radon support in $\ell_{\infty}(E\times\mathbb{R})$ we also have
\begin{equation}\label{ineq-3}
[\sup_{t \in E, \alpha \in I}| \widehat G_t \circ F_t^{-1}(\alpha)|]^{*}= \sup_{t \in E, \alpha \in I}| \widehat G_t \circ F_t^{-1}(\alpha)|. 
\end{equation}
Therefore, for every $\epsilon>0$ and all $n \ge 1$, by combining (\ref{strongCLTeq}), (\ref{ineq-2}) and(\ref{ineq-3}) we have an $M=M(\epsilon)$ sufficiently large that
\begin{equation}\label{ineq-4}
 \widehat{P}([\sup_{t \in E, \alpha \in I}|\sqrt n(F_t(\widehat{\tau}_{\alpha}^n(t)) - F_t(\tau_{\alpha}(t)))|]^{*} \ge M) \le \epsilon.
\end{equation}

We now turn to showing that  
\begin{equation}\label{quant-bdd-prob}
[\sup_{t \in E, \alpha \in I}|\sqrt n  (\widehat {\tau}_{\alpha}^n(t)- \tau_{\alpha}(t))|]^{*} 
\end{equation}
is bounded in $\widehat{P}$-probability. That is, let
$$
\lambda(t,\delta) = \sup_{|u-v| \le \delta}|f(t,u)-f(t,v)|.
$$
Then
$$
|R(t,x,y)| \leq \lambda(t,|x-y|),
$$
and hence by (\ref{unif-cont-densities}) for every $\epsilon>0$ there exists $\delta>0$ such that $|x-y|\le \delta$ implies
$$
\sup_{t \in E}|R(t,x,y)| < \epsilon.
$$
Therefore, for every $\epsilon\in (0, \frac{c_{I,\theta(I)}}{2})$ there exists $\delta=\delta(\epsilon)>0$ suitably chosen such that
\begin{align}\label{R-bd-1}
b_n &\le \widehat{P}([\sup_{t \in E, \alpha \in I}|R(t,\widehat {\tau}_{\alpha}^n(t), \tau_{\alpha}(t))|]^{*} \ge \epsilon)\\
&= \widehat{P}^{*}(\sup_{t \in E, \alpha \in I}|R(t,\widehat {\tau}_{\alpha}^n(t), \tau_{\alpha}(t))| \ge \epsilon),\notag
\end{align}
and since
\begin{align}\label{R-bd-2}
\widehat{P}^{*}(\sup_{t \in E, \alpha \in I}|R(t,\widehat {\tau}_{\alpha}^n(t), \tau_{\alpha}(t))| \ge \epsilon) &\le \widehat{P}^{*}(\sup_{t \in E, \alpha \in I}|\widehat {\tau}_{\alpha}^n(t)- \tau_{\alpha}(t))| \ge \delta)\\
&= \widehat{P}([\sup_{t \in E, \alpha \in I}|\widehat {\tau}_{\alpha}^n(t)- \tau_{\alpha}(t))|]^{*} \ge \delta),\notag
\end{align}
Lemma \ref{quantile-LLN} implies for every $\epsilon \in (0, \frac{c_{I,\theta(I)}}{2})$ that
\begin{equation}\label{R-bd-3}
\lim_{n \rightarrow \infty} b_n=0.
\end{equation}
Combining (\ref{ineq-1}), (\ref{ineq-4}), and (\ref{R-bd-3}), we have (\ref{quant-bdd-prob}), i.e. $[\sup_{t \in E, \alpha \in I}|\sqrt n  (\widehat {\tau}_{\alpha}^n(t)- \tau_{\alpha}(t))|]^{*}$ 
is bounded in $\widehat{P}$-probability. Furthermore, we then also have that
\begin{equation}\label{R-bd-4}
[\sup_{t \in E, \alpha \in I}|\sqrt n  (\widehat {\tau}_{\alpha}^n(t)- \tau_{\alpha}(t))||R(t,\widehat{\tau}_{\alpha}^n(t), \tau_{\alpha}(t))|]^{*}
\end{equation}
converges in $\widehat{P}$ probability to zero.

Now, by (\ref{strongCLTeq}) and (\ref{mvt-R}) we have with $\widehat{P}$-probability one that
\begin{equation}\label{clt-R-bd}
\lim_{n \rightarrow \infty} \Big(\sup_{ t \in E, \alpha \in I}|\sqrt n (\widehat \tau_{\alpha}^n(t)- \tau_{\alpha}(t))[ f(t, \tau_{\alpha}(t))+ R(t,\tau_{\alpha}(t), \widehat \tau_{\alpha}^n(t))] + \widehat G(t, \tau_{\alpha}(t)|\Big)^{*} =0,
\end{equation}
and since
$$
[\sup_{ t \in E, \alpha \in I}|\sqrt n (\widehat \tau_{\alpha}^n(t)- \tau_{\alpha}(t))f(t, \tau_{\alpha}(t)) + \widehat G(t, \tau_{\alpha}(t)|]^{*}\leq u_n +v_n,  
$$
where 
$$
u_n \le [\sup_{ t \in E, \alpha \in I}|\sqrt n (\widehat \tau_{\alpha}^n(t)- \tau_{\alpha}(t))[ f(t, \tau_{\alpha}(t))+ R(t,\tau_{\alpha}(t), \widehat \tau_{\alpha}^n(t))] + \widehat G(t, \tau_{\alpha}(t)|]^{*}  
$$
and
$$
v_n \le[\sup_{ t \in E, \alpha \in I}|\sqrt n (\widehat \tau_{\alpha}^n(t)- \tau_{\alpha}(t))R(t,\tau_{\alpha}(t), \widehat \tau_{\alpha}^n(t))|] ^{*},
$$
we have by combining (\ref{R-bd-4}) and (\ref{clt-R-bd}) that
$$
[\sup_{ t \in E, \alpha \in I}|\sqrt n (\widehat \tau_{\alpha}^n(t)- \tau_{\alpha}(t))f(t, \tau_{\alpha}(t)) + \widehat G(t, \tau_{\alpha}(t)|]^{*}\rightarrow 0
$$
in $\widehat{P}$ probability. Hence (\ref{prob-CLT}) is proven.

To finish the proof it remains to check that the quantile processes
$$ 
\{\sqrt n ( \widehat{\tau}_{\alpha}^n(t)- \tau_{\alpha}(t))\colon\  n \geq 1\}.
$$ 
also satisfy the CLT in $\ell_{\infty}(E \times I)$ with Gaussian limit process $\{\frac{\widehat{G}(t,\tau_{\alpha}(t))}{ f(t, \tau_{\alpha}(t))}\colon\  (t,\alpha) \in E \times I \}$.
Since (\ref{prob-CLT}) holds, and by (\ref{inf-eq}) we have the non-random quantity
$$
\sup_{t \in E, \alpha \in I} \frac{1}{f(t, \tau_{\alpha}(t))} <\infty,
$$
we thus have
\begin{equation}\label{prob-CLT2}
 \Big(\sup_{ t \in E, \alpha \in I}|\sqrt n (\widehat \tau_{\alpha}^n(t)- \tau_{\alpha}(t))+ \frac{\widehat G(t, \tau_{\alpha}(t)}{f(t, \tau_{\alpha}(t))   }|\Big)^{*} \rightarrow 0 
\end{equation}
in $\widehat{P}$-probability.
The CLT  then follows from Theorem 3.6.1 of \cite{Dudley-unif-clt}, and that the Gaussian process $\widehat{G}$ is symmetric. Hence the theorem is proven.
\end{proof}

The next result shows that the conclusions of Theorem \ref{density top} also hold for the relevant processes as defined on the original probability space $(\Omega,\mathcal{A},Q)$. 

\begin{cor}\label{clt unhat}
Assume for all $t \in E$ that the distribution functions $F(t,x)$ are strictly increasing, their densities $f(t,\cdot)$ satisfy (\ref{unif-cont-densities}) and (\ref{inf-eq}), and the CLT holds on $\mathcal{C}$ with limit $\{G(t,x)\colon\  (t,x) \in E \times \mathbb{R}\}$.
Then, 
for $I$ a closed subinterval of $(0,1)$, 
the quantile processes $\{\sqrt n ( \tau_{\alpha}^n(t)- \tau_{\alpha}(t)) f(t, \tau_{\alpha}(t))\colon\  n \geq 1\}$ satisfy the CLT in $\ell_{\infty}(E \times I)$ with Gaussian limit process $\{ G(t,\tau_{\alpha}(t))\colon\  (t,\alpha) \in E \times I \}$. Moreover, the quantile processes $\{\sqrt n ( \tau_{\alpha}^n(t)- \tau_{\alpha}(t))\colon\  n \geq 1\}$ also satisfy the CLT in $\ell_{\infty}(E \times I)$ with Gaussian limit process $\{\frac{G(t,\tau_{\alpha}(t))}{ f(t, \tau_{\alpha}(t))}\colon\  (t,\alpha) \in E \times I \}$.
\end{cor}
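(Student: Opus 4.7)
The plan is to transfer the CLT proved on the hat space $(\widehat{\Omega},\widehat{\mathcal F},\widehat P)$ in Theorem \ref{density top} back to the original space $(\Omega,\mathcal A,Q)$, exploiting the fact that the maps $g_n$ from the almost sure representation are perfect and satisfy $\widehat P \circ g_n^{-1}=Q$, as in (\ref{distribution-perfect}).

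First I would observe the pointwise identity $\widehat\tau_{\alpha}^n(t)(\hat\omega)=\tau_{\alpha}^n(t)(g_n(\hat\omega))$: since $\widehat F_n(t,x)(\hat\omega)=F_n(t,x)(g_n(\hat\omega))$ for every $(t,x)$, the infimum characterization of the generalized inverse in $x$ commutes with the composition by $g_n$. Define $T_n\colon\Omega\to\ell_\infty(E\times I)$ by $T_n(\omega)(t,\alpha)=\sqrt n(\tau_{\alpha}^n(t)(\omega)-\tau_{\alpha}(t))f(t,\tau_{\alpha}(t))$; then the analogous map on the hat space is exactly $T_n\circ g_n$. Since $g_n$ is perfect and law-preserving, the outer-expectation identity already invoked in Remark \ref{rem1.1} extends immediately from $h(\sqrt n(F_n-F))$ to the present nonlinear functional: for every bounded $h\colon\ell_\infty(E\times I)\to\mathbb R$,
\begin{equation*}
\E_{\widehat P}^{*}[h(T_n\circ g_n)] = \E_Q^{*}[h(T_n)].
\end{equation*}

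Next I would apply Theorem \ref{density top}: for every bounded continuous $h$, the left-hand side converges to $\E[h(\widehat G(\cdot,\tau_{\cdot}(\cdot)))]$. Because $\widehat G=G\circ g_0$ and $\widehat P\circ g_0^{-1}=Q$, the processes $\widehat G$ and $G$ have the same law on $\ell_\infty(E\times\mathbb R)$, both being Borel measurable with separable support (as established inside the proof of Lemma \ref{lem2.2}); hence the limit equals $\E[h(G(\cdot,\tau_{\cdot}(\cdot)))]$. The Portmanteau characterization of weak convergence through bounded continuous functionals then delivers the first asserted CLT on $(\Omega,\mathcal A,Q)$. The second CLT follows by applying the bounded continuous transformation $(t,\alpha,z)\mapsto z/f(t,\tau_{\alpha}(t))$, which is legitimate because by (\ref{inf-eq}) the quantity $f(t,\tau_{\alpha}(t))$ is bounded away from zero on $E\times I$.

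The only subtlety is bookkeeping: one must verify that the map $F_n\mapsto F_n^{-1}(\cdot,\alpha)$ commutes with $g_n$ (immediate from definitions), and invoke the defining property of perfect functions to move outer expectations across the composition $h\circ T_n$, exactly as was done for $h(\sqrt n(F_n-F))$ in (\ref{old1.8}). There is no genuine analytical obstacle; all the work has been absorbed into Theorem \ref{density top}, and the corollary is essentially a clean transfer across the perfect maps of the almost sure representation.
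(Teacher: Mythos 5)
Your proposal is correct and follows essentially the same route as the paper: both proofs exploit that the hat-space quantile process is literally the original quantile functional composed with the perfect map $g_n$, use perfectness and $\widehat P\circ g_n^{-1}=Q$ to equate outer expectations of bounded functionals, and identify the laws of $\widehat G$ and $G$ to read off the limit. The only cosmetic difference is that for the second CLT the paper re-runs the transfer argument for the un-normalized quantile process, whereas you apply the (continuous, since $1/f(t,\tau_\alpha(t))$ is bounded by (\ref{inf-eq})) multiplication map to the first conclusion; both are valid.
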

 
\begin{proof} 
Recall the notation established at the start of this section in connection with the statement of Theorem \ref{thm2.1}, and the perfect mappings $g_n: \widehat{\Omega} \rightarrow \Omega$ such that $Q=\widehat{P} \circ g_n^{-1}$. In particular, equations (\ref{distribution-perfect}) to (\ref{tau}) are relevant.

For $u_1,\cdots,u_n \in D(E)$ and $n \geq 1,  t\in E, \alpha \in (0,1)$ define
\begin{align}\label{unhat-1}
 k_n(u_1,\cdots,u_n, t, \alpha) &=\sqrt n [ \inf \{ x: \sum_{j=1}^n I(u_j(t) \leq x) \geq n \alpha\}\\
&\quad - (F_t)^{-1}(\alpha)]f(t,\tau_{\alpha}(t)). \notag
\end{align}
where $\tau_{\alpha}(t)=  (F_t)^{-1}(\alpha)$.
Hence setting
$$
 r_n(t,\alpha,\omega) \equiv k_n(X_1,\cdots,X_n, t, \alpha)( \omega)\equiv k_n(X_1(\cdot, \omega),\cdots,X_n(\cdot,  \omega), t, \alpha),
$$
we then have 
\begin{equation}\label{unhat-2}
\sqrt n [(F_{n,t})^{-1}(\alpha)(\omega)- \tau_{\alpha}(t)]f(t,\tau_{\alpha}(t)) =  r_n( t, \alpha, \omega) 
\end{equation}
and
\begin{equation}\label{unhat-3}
\sqrt n [(\widehat {F}_{n,t})^{-1}(\alpha)(\hat \omega)- \tau_{\alpha}(t)]f(t,\tau_{\alpha}(t) ) =  r_n( t, \alpha,g_n(\hat \omega)) . 
\end{equation}
Therefore, for $ \hat \omega \in\widehat{ \Omega}$
$$
(\widehat{F}_{n,t})^{-1}(\alpha)( \hat \omega)= (F_{n,t})^{-1}(\alpha)( g_n( \hat\omega)),
$$
and for $h$ bounded on $\ell_{\infty}(E \times I)$ we have
$$
h(\sqrt n [(\widehat {F}_{n,t})^{-1}(\alpha)( \hat \omega)- \tau_{\alpha}(t)]f(t,\tau_{\alpha}(t)))= (h \circ r_n(t, \alpha, \cdot) \circ g_n)(\hat \omega),
$$
and hence the upper integrals
\begin{align*}
&\int_{\widehat{\Omega}}^{*} h(\sqrt n [(\widehat {F}_{n,t})^{-1}(\alpha)( \hat \omega)- \tau_{\alpha}(t)]f(t,\tau_{\alpha}(t)))d\widehat{P}\hat (\omega)\\
 =~ &\int_{\widehat{\Omega}}^{*}(h \circ r_n(t, \alpha, \cdot) \circ g_n)(\hat \omega)d\widehat{P}(\hat \omega)
\end{align*}
\begin{align}\label{unhat-4}
=  &\int_{\widehat{\Omega}}[(h \circ r_n(t, \alpha, \cdot) \circ g_n)]^{*}(\hat \omega)d\widehat{P}(\hat \omega)\\
= &\int_{\widehat{\Omega}}([h \circ r_n(t, \alpha, \cdot)]^{*} \circ g_n)(\hat \omega)d\widehat{P}(\hat \omega),\notag
\end{align}
where the last equality holds since $g_n$ is perfect. Now
\begin{align*}
 \int_{\widehat{\Omega}}([h \circ r_n(t, \alpha, \cdot)]^{*} \circ g_n)(\hat \omega)d\widehat{P}(\omega)&=  \int_{\Omega}[h \circ r_n(t, \alpha,  \omega)]^{*}dQ(\omega)\\
& = \int_{ \Omega}^{*}(h \circ r_n)(t, \alpha,  \omega)dQ( \omega),
\end{align*}
and therefore by (\ref{unhat-2}) and (\ref{unhat-4}), for all $h$ bounded on $\ell_{\infty}(E \times I)$,
\begin{align}
&\int_{\widehat{\Omega}}^{*} h(\sqrt n [(\widehat {F}_{n,t})^{-1}(\alpha)(\hat \omega)- \tau_{\alpha}(t)]f(t,\tau_{\alpha}(t)))d\widehat{P}(\hat\omega)\notag\\
\label{unhat-5}
 =  &\int_{\Omega}^{*}h(\sqrt n [(F_{n,t})^{-1}(\alpha)( \omega)- \tau_{\alpha}(t)]f(t,\tau_{\alpha}(t)))dQ( \omega). 
\end{align}
Now the equality in (\ref{unhat-5}) implies that the quantile processes
$$
\{\sqrt n [(\widehat {F}_{n,t})^{-1}(\alpha)( \hat \omega)- \tau_{\alpha}(t)]f(t,\tau_{\alpha}(t) ): n\geq 1, t \in E, \alpha \in I\}
$$
satisfy the $CLT$ in $\ell{_\infty}(E \times I)$ if and only if 
$$ 
\{\sqrt n [(F_{n,t})^{-1}(\alpha)( \omega)- \tau_{\alpha}(t)]f(t,\tau_{\alpha}(t) ): n\geq 1, t \in E, \alpha \in I\}
$$
satisfy the $CLT$ there, and they have the same Gaussian limit, namely 
$$
\{G(t,\tau_{\alpha}(t)): t \in E, \alpha \in I\}.
$$

A similar argument implies the quantile processes
$$
\{\sqrt n [(\widehat {F}_{n,t})^{-1}(\alpha)( \hat \omega)- \tau_{\alpha}(t)]: n\geq 1, t \in E, \alpha \in I\}
$$
satisfy the $CLT$ in $\ell{_\infty}(E \times I)$ if and only if 
$$ 
\{\sqrt n [(F_{n,t})^{-1}(\alpha)( \omega)- \tau_{\alpha}(t)]: n\geq 1, t \in E, \alpha \in I\}
$$
satisfy the $CLT$ there, and they have the same Gaussian limit. Since Theorem \ref{density top} implies the
Gaussian limit of 
$$
\{\sqrt n [(\widehat {F}_{n,t})^{-1}(\alpha)( \hat \omega)- \tau_{\alpha}(t)]: n\geq 1, t \in E, \alpha \in I\}
$$
is given by  
$$
\{\frac{\widehat{G}(t,\tau_{\alpha}(t))}{f(t,\tau_{\alpha}(t))}: t \in E, \alpha \in I\},
$$
which has the same Radon law on $\ell_{\infty}(E \times I)$ as
$$
\{\frac{G(t,\tau_{\alpha}(t))}{f(t,\tau_{\alpha}(t))}: t \in E, \alpha \in I\},
$$
the corollary is proven.

\end{proof}

\section{The Empirical CLT over $\mathcal{C}$}\label{applications}

In order to prove the empirical quantile CLT of Theorem~\ref{density top}, and its corollary,  we assumed the empirical CLT over $\mathcal{C}$ holds. Empirical results of this type were established in \cite{kkz} for fractional Brownian motions and the Brownian sheet as long as these processes were not fixed to be zero at some point, and later in the paper we will use these facts to establish the empirical quantile CLT with those processes as the base process. The purpose of this section is to broaden the class of base processes to which \cite{kkz} applies, and that then will also be potential applications for our quantile process results. In particular, in this section we show how \cite{kkz} yields the empirical CLT for a broader  class of Gaussian processes, all compound Poisson processes, and also many stationary independent increment processes and martingales. In particular, these results apply to all symmetric stable processes, and below we will show that under certain ci
 rcumstances empirical quantile CLT's also hold for such processes. 

In the typical empirical process result over $\mathcal{C}$ that we establish, the input process $\{X(t): t \in E\}$ is given in terms of a base process $\{Y(t): t \in E \}$ where $P(Y(0)=0)=1$ when $E=[0,T]$, and $X(t)=Y(t)+Z, t \in E$. The random variable $Z$ is assumed independent of the base process, and has a density which is uniformly bounded on $\mathbb{R}$, or in $L_a(\mathbb{R})$ for some $a \in (1,\infty)$. The use of $Z$ allows to say the densities of each $X(t)$ have a uniform property provided the density of $Z$ has that property, and is an efficient way to do this. More important, however, is that in many classical examples the base process
 $\{Y(t): t \in E\}$  with $P(Y(0)=0)=1$ fails the empirical CLT over $\mathcal{C}$, yet the input process $X(t)=Y(t)+Z, t \in E,$ satisfies it. Examples of this type include fractional Brownian motions on $E=[0,T]$, the d-parameter Brownian sheet on $E=[0,T]^d$, and also strictly stable processes with stationary independent increments. This was pointed out for fractional Brownian motions in \cite{kkz}, and we will say more about the other examples at appropriate points of this section.
   
\subsection{ Additional Gaussian process empirical CLT's over $\mathcal{C}$ }
Throughout this subsection we assume $E$ is a compact subset of the d-fold product of $[0,T]$, which we denote by $[0,T]^d$, and that $\{X_t: t \in E\}$ is a centered Gaussian process whose $L_2$-distance $d_X$ is such that for some $k_1<\infty, s,t \in E$,
\begin{equation}\label{euclidean-eq}
d_X(s,t)= [\E((X_t-X_s)^2)]^{\frac{1}{2}} \leq k_1e^{\gamma}(s,t),
\end{equation}
where $e(s,t)$ is the usual $L_2$-distance on $\mathbb{R}^d$ and $0< \gamma \leq1$ . Furthermore, applying Theorem 6.11 on page 144 of \cite{araujo-gine-book} we have from (\ref{euclidean-eq}) that $\{X_t: t \in E\}$ has a sample continuous version $\{\tilde X_t: t \in E\}$ such that for $s,t \in E$
\begin{equation}\label{eq3.2}
| \tilde X_t - \tilde X_s | \leq \Gamma e^r(s,t), 
\end{equation} 
where $ \Gamma < \infty$ with probability one, and $0<r < \gamma$. Hence, without loss of generality, we may also assume throughout the sub-section that $\{X_t: t \in E\}$ is sample continuous with (\ref{eq3.2})
holding.

\begin{prop}\label{prop3.1}
Let $E$ be a compact subset of $[0,T]^d$, and assume $\{X_t: t \in E\}$ is a sample continuous centered Gaussian process such that (\ref{euclidean-eq}) holds and for all $x,y \in \mathbb{R}, k_2<\infty,$ and some $\beta \in(0,1]$
\begin{equation}\label{eq3.3}
\sup_{t \in E} |  F_t(x) - F_t(y) | \leq k_2 |x-y|^{\beta}. 
\end{equation}  
Then, the empirical CLT built from the process $\{X_t: t \in E\}$ holds over $\mathcal{C}$. Moreover, if $\{Y_t: t \in E\}$ is a sample continuous centered Gaussian process such that (\ref{euclidean-eq}) holds, and $Z$ is a random variable independent of $\{Y_t: t \in E\}$ whose density is uniformly bounded on $\mathbb{R}$, or in $L_p(\mathbb{R})$ for some $p \in (1,\infty)$, then  the empirical CLT based on the process $\{X_t: t \in E\}$ holds over $\mathcal{C}$, where $X_t=Y_t +Z, t \in E$.
\end{prop}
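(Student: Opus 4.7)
The plan is to verify that the hypotheses of the empirical CLT over $\mathcal{C}$ from \cite{kkz} are satisfied by the input process $\{X_t\colon t\in E\}$. The key technical step is to control the $L_2(P)$-pseudo-metric $d$ on $E\times\mathbb{R}$ from \eqref{eq1.1} by a polynomial function of the Euclidean distance $e(s,t)$ and $|x-y|$; this yields total boundedness of $(E\times\mathbb{R},d)$ and an integrable entropy estimate suitable for the framework of \cite{kkz}.

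For the first assertion, I would start from
\[
d^{2}((s,x),(t,y))\le 2\,\E\bigl[(I(X_s\le x)-I(X_t\le x))^{2}\bigr]+2\,|F_t(x)-F_t(y)|.
\]
The second term is bounded by $2k_2|x-y|^{\beta}$ via \eqref{eq3.3}. For the first, for any $\ve>0$,
\[
\E\bigl[(I(X_s\le x)-I(X_t\le x))^{2}\bigr]\le 2\,P(|X_s-X_t|>\ve)+2\,P(|X_s-x|\le\ve).
\]
The Gaussian tail combined with \eqref{euclidean-eq} gives $P(|X_s-X_t|>\ve)\le 2\exp\{-\ve^{2}/(2k_1^{2}e^{2\gamma}(s,t))\}$, while \eqref{eq3.3} yields $P(|X_s-x|\le\ve)\le k_2(2\ve)^{\beta}$. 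Choosing $\ve=e^{r}(s,t)$ with $0<r<\gamma$ makes the exponential term negligible and leaves a bound of order $e(s,t)^{r\beta}+|x-y|^{\beta}$. Since the variances $\E X_t^{2}$ are uniformly bounded on the compact set $E$ (by \eqref{euclidean-eq} plus the triangle inequality), $F_t(x)\to 0$ or $1$ uniformly in $t\in E$ as $|x|\to\infty$, so $(E\times\mathbb{R},d)$ is totally bounded. The polynomial control of $d$ then produces covering numbers $N(\eta,\mathcal{C},d)$ of polynomial order in $1/\eta$, whose logarithm is integrable, so the entropy criterion of \cite{kkz} applies.

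For the second assertion, I would reduce to the first by showing that \eqref{eq3.3} is automatic when $X_t=Y_t+Z$. Conditioning on $Y_t$ gives $F_{X_t}(x)=\E[G(x-Y_t)]$, where $G$ is the distribution function of $Z$. If the density of $Z$ is uniformly bounded by $M$, then $|F_{X_t}(x)-F_{X_t}(y)|\le M|x-y|$, yielding \eqref{eq3.3} with $\beta=1$. If the density $g$ of $Z$ lies in $L_p(\R)$ for some $p\in(1,\infty)$, H\"older's inequality gives
\[
|F_{X_t}(x)-F_{X_t}(y)|\le \E\!\int_{y}^{x}g(u-Y_t)\,du\le \|g\|_{p}\,|x-y|^{1-1/p},
\]
hence \eqref{eq3.3} with $\beta=1-1/p$. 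Because the increments $X_s-X_t=Y_s-Y_t$ remain Gaussian and preserve \eqref{euclidean-eq}, and $X_t$ inherits sample continuity from $Y_t$, the argument of the first part carries over verbatim --- the Gaussian concentration was only used for the increments, not for $X_t$ itself.

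The main obstacle will be matching the polynomial bound on $d$ to the precise form of the hypothesis used in \cite{kkz}, which may be phrased via bracketing entropy, uniform entropy with envelope, or a direct joint modulus-of-continuity condition on $\{F_t\colon t\in E\}$ combined with a sample path modulus for $X$. A subsidiary point is ensuring that the Gaussian limit $G$ admits a version in $\ell_{\infty}(E\times\mathbb{R})$ with uniformly $d$-continuous paths, but this is automatic from the same bound on $d$, so the principal work remains the $L_2$-estimate described above.
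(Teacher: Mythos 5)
Your reduction of the second assertion to the first is correct and is exactly what the paper does: conditioning on $Y_t$ and using either the uniform bound on the density of $Z$ or H\"older's inequality gives (\ref{eq3.3}) with $\beta=1$ or $\beta=1-1/p$, and the increments of $X$ coincide with those of $Y$, so (\ref{euclidean-eq}) is preserved. The $L_2$-estimate for $d$ in (\ref{eq1.1}) is also fine as a computation. The gap is in the final step of the first assertion, where you assert that polynomial covering numbers $N(\eta,\mathcal{C},d)$ with respect to the $L_2(P)$-pseudometric, with integrable log, constitute ``the entropy criterion of \cite{kkz}'' and hence yield the CLT. No such criterion is available: finiteness of Dudley's entropy integral for the $L_2(P)$-distance only makes the class pregaussian (it gives the required sample-continuous version of the limit $G$), and pregaussian classes of sets need not be Donsker. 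Indeed, the failure examples in this very paper (Brownian motion or the Brownian sheet pinned at zero, Corollary \ref{cor3.3}) fail the CLT for combinatorial reasons --- the class $\mathcal{C}$ shatters arbitrarily large finite sets with probability one --- and this cannot be ruled out by an $L_2(P)$-entropy bound alone. A sufficient condition must control the empirical process itself, via uniform (Koltchinskii--Pollard) entropy, bracketing, or, as in \cite{kkz}, a majorizing Gaussian comparison.

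What the paper actually does is quite different at this point. It verifies hypotheses (I) and (II) of Theorem 5 of \cite{kkz}, which concern the sample-path modulus of $X$: from (\ref{euclidean-eq}) one gets the a.s.\ H\"older bound (\ref{eq3.2}) with random constant $\Gamma$, and the Fernique--Landau--Shepp theorem gives exponential integrability of $\Gamma$. It then verifies condition (III) via Remark 8 of \cite{kkz} by explicitly constructing an auxiliary centered Gaussian process $\{H^{\alpha}(t)\colon t\in E\}$ --- a fractional Brownian motion when $d=1$, and a sum of $d$ independent fractional Brownian motions in the coordinates when $d>1$ --- that is sample bounded and uniformly continuous and whose $L_2$-distance $\rho_{\alpha}$ dominates $e(s,t)^{r\alpha}$ for some $\alpha\in(0,\beta/2)$. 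None of these steps appears in your proposal: you neither address the exponential integrability of the H\"older constant nor construct the dominating Gaussian process, and the entropy computation you substitute for them does not imply the conclusion. The estimate you prove is a useful ingredient (it is what makes the limit $G$ Radon), but the heart of the proof is missing.
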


\begin{proof} First we assume $\{X_t: t \in E\}$ is a sample continuous centered Gaussian process such that (\ref{euclidean-eq}), (\ref{eq3.2}), and (\ref{eq3.3}) hold. Then, applying the Fernique-Landau-Shepp result we have exponential decay of the tail probability of $\Gamma$ in (\ref{eq3.2}), and hence assumptions (I ) and (II)
of Theorem 5 in \cite{kkz} hold. If (\ref{euclidean-eq}) and (\ref{eq3.2}) hold for $\{Y_t: t \in E\}$ and $X_t=Y_t + Z$, where the density of $Z$ is uniformly bounded density or in $L_p$ as indicated,  then standard convolution formulas imply (\ref{eq3.3}) holds for $\{X_t: t \in E\}$. In particular, if the  density is $Z$ is assumed to be uniformly bounded, then (\ref{eq3.3}) holds with $\beta=1$, and if it is in $L_p(\mathbb{R})$, then $\beta=1-1/p$ suffices. Therefore, under either assumption on the density of $Z$, we have assumptions (I ) and (II)
of Theorem 5 in \cite{kkz} holding for $\{X_t:t \in E\}$. 

Therefore, the conclusions of the proposition hold in either situation provided we verify condition (III) of that theorem. That is, from Remark 8 in \cite{kkz} we need to verify there exists a centered Gaussian process $\{H^{\alpha}(t):t \in E\}$ with $L_2$-distance $\rho_{\alpha}(s,t)$, which is sample bounded and uniformly continuous on $(E, \rho_{\alpha})$, and for some $\alpha \in(0,  \frac{\beta}{2})$ we have
\begin{equation}\label{eq3.4}
(e(s,t))^{r\alpha}\leq  \rho_{\alpha}(s,t), s,t \in E.
\end{equation}

To verify (\ref{eq3.4}) we first assume $d=1$, and hence that $E$ is a compact subset of $[0,T]$. The Gaussian process $\{H^{\alpha}(t): t \in E\}$ is then defined to be a centered, sample continuous, $\alpha \theta$ fractional Brownian motion on $[0,T]$ with $L_2$-distance $\rho_{\alpha}(s,t) =|s-t|^{\alpha\theta}, s,t \in [0,T]$ and $\theta \in (0,r)$ sufficiently small that (\ref{eq3.4}) holds.

If $d>1$ and $\alpha \in (0, \frac{\beta}{2})$ is fixed, then $E$ is a compact subset of $[0,T]^d$, and the Gaussian process $\{H^{\alpha}(t): t \in E\}$ is defined to be the sum 
\begin{equation}
H^{\alpha}(t) = \sum_{j=1}^d H^{\alpha} _j(t_j), t=(t_1,\cdots,t_d) \in [0,T]^d,
\end{equation}
where the $H^{\alpha}_j$ are 
centered, independent sample continuous, $\alpha \theta$ fractional Brownian motions on $[0,T]$ such that for $t_j\geq 0$ and $\theta \in (0,r)$
\begin{equation}
\E((H^{\alpha}_j(t_j))^2)= (T\vee 1)^{2r\alpha}t_j^{2\theta\alpha}.
\end{equation}
Hence the $L_2$-distance for $\{H^{\alpha}(t): t \in [0,T]^d\}$ is
\begin{equation}\label{rho alpha}
\rho_{\alpha}(s,t) =(\sum_{j=1}^d (T \vee 1)^{2r\alpha} |t_j-s_j|^{2\alpha\theta})^{\frac{1}{2}}, s,t \in [0,T]^d,
\end{equation}
and with $e(s,t)$ the Euclidean distance on $\mathbb{R}^d$, we have
\begin{equation}
e(s,t)^{r\alpha} = (\sum_{j=1}^d |t_j-s_j|^2)^{\frac{r\alpha}{2}} \leq (\sum_{j=1}^d (\frac{ |t_j-s_j|}{T\vee 1})^{2r\alpha} (T\vee 1)^{2r\alpha})^{\frac{1}{2}},  
\end{equation}
where the  inequality holds since $0< r\alpha < 1$. Since $\theta \in (0,r)$ we therefore have 
\begin{equation}\label{eralpha}
e(s,t)^{r\alpha} = (\sum_{j=1}^d |t_j-s_j|^2)^{\frac{r\alpha}{2}} \leq (\sum_{j=1}^d (\frac{ |t_j-s_j|}{T\vee 1})^{2\alpha\theta} (T\vee 1)^{2r\alpha})^{\frac{1}{2}}. 
\end{equation}
Therefore, by combining   (\ref{rho alpha}), and (\ref{eralpha}) we have  (\ref{eq3.4}), and the proof is complete.
\end{proof}

Our first application of Proposition~\ref{prop3.1} is to fractional Brownian motions. This result was obtained in \cite{kkz}, but we include it here as its proof is an immediate application of this proposition, and an empirical quantile CLT result will also be obtained for these processes later in the paper.
\begin{cor}\label{cor3.1}
Let $E=[0,T]$, and assume $\{Y_t\colon\  t \in E]\}$ is a centered sample continuous $\gamma$-fractional Brownian motion for $0<\gamma< 1$ such that $Y_0=0$ with probability one and $\E(Y_t^2)= t^{2\gamma}$ for $t \in E$. Set $X_t = Y_t + Z$, where $Z$ is independent of  $\{Y_t\colon\  t \in E\}$, and assume $Z$ has a density that is uniformly bounded on $\mathbb{R}$ or is in $L_p(\mathbb{R})$ for some $p \in (1, \infty)$.
Then, the empirical CLT holds over $\mathcal{C}$. 
\end{cor}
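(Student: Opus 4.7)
The plan is to invoke the ``moreover'' clause of Proposition~\ref{prop3.1} directly, so the work reduces to checking the hypothesis (\ref{euclidean-eq}) for the base process $\{Y_t:t\in E\}$ and then reading off that all other hypotheses are satisfied automatically. Here $d=1$, so the Euclidean distance on $E=[0,T]$ is simply $e(s,t)=|s-t|$.

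First I would compute the $L_2$-distance of the $\gamma$-fractional Brownian motion $\{Y_t:t \in [0,T]\}$. Using the stated normalization $\E(Y_t^2)=t^{2\gamma}$, $Y_0=0$, and the well-known covariance $\E(Y_sY_t)=\tfrac12(s^{2\gamma}+t^{2\gamma}-|s-t|^{2\gamma})$, one obtains
\begin{equation*}
d_Y(s,t)=\bigl(\E((Y_t-Y_s)^2)\bigr)^{1/2}=|s-t|^{\gamma}=e(s,t)^{\gamma},
\end{equation*}
so (\ref{euclidean-eq}) holds with $k_1=1$ and with the same exponent $\gamma\in(0,1)$. Sample continuity of $Y$ is part of the hypothesis, so there is nothing further to verify about the base process.

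Next I would simply invoke the second assertion of Proposition~\ref{prop3.1}: since $Y$ is a sample continuous centered Gaussian process satisfying (\ref{euclidean-eq}), since $Z$ is independent of $Y$ and its density is either uniformly bounded on $\mathbb R$ or lies in $L_p(\mathbb R)$ for some $p\in(1,\infty)$, the proposition yields the empirical CLT over $\mathcal C$ for the input process $X_t=Y_t+Z$. (Implicitly, the convolution step inside Proposition~\ref{prop3.1} produces the uniform H\"older bound (\ref{eq3.3}) on $F_t$, with $\beta=1$ or $\beta=1-1/p$ according to which assumption on the density of $Z$ is in force.)

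There is no genuine obstacle here: every hypothesis of Proposition~\ref{prop3.1} is either built into the statement of the corollary or produced for free by the fractional Brownian covariance computation above. The only thing a reader might want to double-check is the $\gamma$ in the corollary versus the $\gamma$ in (\ref{euclidean-eq})---they coincide, because for fBm the index parameter controls both the variance growth and the $L_2$ modulus.
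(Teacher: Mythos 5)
Your proposal is correct and follows essentially the same route as the paper: compute the fBm $L_2$-distance $|s-t|^{\gamma}$ to verify (\ref{euclidean-eq}) with $k_1=1$, and then invoke the ``moreover'' clause of Proposition~\ref{prop3.1}, with the density hypotheses on $Z$ supplying (\ref{eq3.3}) via convolution. No gaps.
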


\begin{proof} The $L_2$-distance for  $\{X_t\colon\  t \in E\}$ is $d_X(s,t) = |s-t|^{\gamma}$, and hence (\ref{euclidean-eq}) holds with $k-1=1$. Also, (\ref{eq3.2}) holds with $0< r< \gamma$, and the assumptions on the density of $Z$ then imply (\ref{eq3.3}). Therefore, Proposition~\ref{prop3.1} applies to complete the proof.
\end{proof}

Our next application of Proposition~\ref{prop3.1} is to the d-dimensional Brownian sheet. A result for $d=2$ appeared in \cite{kkz}, but once we have Proposition~\ref{prop3.1} in hand, the d-dimensional case follows easily.

\begin{cor}\label{cor3.2}
Let $E=[0,T]^d$ for $d \geq 2$, and assume  $\{Y_{t}\colon\  t \in E\}$ is a centered sample continuous Brownian sheet with covariance function 
\begin{equation}\label{sheet covariance}
\E(Y_sY_t)= \prod_{j=1}^d (s_j \wedge t_j),~ s=(s_1,\cdots,s_d),t=(t_1,\cdots,t_d) \in E.
\end{equation}
For $t \in E$, let $X_t = Y_t + Z$, where $Z$ is  independent of  $\{Y_{t}\colon\  t \in E\}$, and assume $Z$ has a density that is uniformly bounded on $\mathbb{R}$ or is in $L_p(\mathbb{R})$ for some $p \in (1, \infty)$. Then,  the empirical CLT based on the process $\{X_t: t \in E\}$ holds over $\mathcal{C}=\{C_{t,x}\colon\  (t,x) \in E \times \mathbb{R}\}$, where in this setting $C_{t,x} = \{ z\in D(E)\colon\  z(t) \leq x\}$, and $D(E)$ denotes the continuous functions on $E$. Moreover, the empirical CLT over $\mathcal{C}$ fails for the base process $\{Y(t): t \in E\}$.
\end{cor}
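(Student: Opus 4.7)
The plan is to reduce Corollary \ref{cor3.2} to Proposition \ref{prop3.1} by verifying (\ref{euclidean-eq}) for the Brownian sheet $\{Y_t\colon t\in E\}$, and then showing that the addition of $Z$ ensures (\ref{eq3.3}) holds for $X_t=Y_t+Z$. For (\ref{euclidean-eq}), I would use the white-noise representation $Y_t=W([0,t_1]\times\cdots\times[0,t_d])$, which gives $\E((Y_t-Y_s)^2)=|A_t\triangle A_s|$, where $A_u=\prod_{j=1}^d[0,u_j]$ and $|\cdot|$ denotes Lebesgue measure on $\mathbb{R}^d$. A one-coordinate-at-a-time telescoping argument then yields
\[
|A_t\triangle A_s|\le T^{d-1}\sum_{j=1}^{d}|t_j-s_j|\le T^{d-1}\sqrt{d}\,e(s,t),
\]
so $d_Y(s,t)\le k_1 e(s,t)^{1/2}$ with $k_1=T^{(d-1)/2}d^{1/4}$. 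Hence (\ref{euclidean-eq}) holds with $\gamma=1/2$, and sample continuity of $Y$ is classical.

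For (\ref{eq3.3}) I would use a standard convolution argument. Write $f_{X_t}(x)=\E[f_Z(x-Y_t)]$ where $f_Z$ is the density of $Z$. If $f_Z$ is uniformly bounded by some constant $M$, then $f_{X_t}(x)\le M$ uniformly in $t$ and $x$, giving (\ref{eq3.3}) with $\beta=1$ and $k_2=M$. If $f_Z\in L_p(\mathbb{R})$, then Minkowski's integral inequality gives $\|f_{X_t}\|_p\le\|f_Z\|_p$, since $L_p$-norms are translation invariant; combined with Hölder's inequality
\[
|F_t(x)-F_t(y)|=\left|\int_y^x f_{X_t}(u)\,du\right|\le\|f_{X_t}\|_p|x-y|^{1-1/p},
\]
this delivers (\ref{eq3.3}) with $\beta=1-1/p$ and $k_2=\|f_Z\|_p$. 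Proposition \ref{prop3.1} now directly applies to $\{X_t\colon t\in E\}$ and gives the empirical CLT over $\mathcal{C}$.

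For the second assertion, I would argue that the CLT over $\mathcal{C}$ must fail for $\{Y_t\colon t\in E\}$ because $Y_t=0$ whenever any coordinate $t_j$ equals $0$, creating a concentration of mass of $F_t(\cdot)$ at $0$ on the boundary of $E$. Consequently, the map $t\mapsto F_t(0)$ has a jump from $0$ to $1$ as one approaches the boundary of $E$ where $\min_j t_j=0$, and therefore the pseudo-metric $d$ in (\ref{eq1.1}) fails to be totally bounded on $E\times\mathbb{R}$ in any neighborhood of such points. By the necessary conditions for the empirical CLT (i.e.\ sample bounded uniformly continuous Gaussian limit on $(E\times\mathbb{R},d)$), this rules out the CLT. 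The detailed obstruction was spelled out in \cite{kkz} for $d=2$ and the same argument carries over verbatim to $d\ge 2$; I would simply cite that reference rather than reproduce the computation.

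The main obstacle is really the bookkeeping in the telescoping estimate for $|A_t\triangle A_s|$, but this is routine. The convolution part is standard, and the failure assertion reduces to an already-established reference. Thus the proof is essentially an exercise in unpacking Proposition \ref{prop3.1} in the Brownian-sheet setting.
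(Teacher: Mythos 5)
Your verification of the first assertion is correct and essentially the paper's own argument: the paper bounds $d_X^2(s,t)=|\prod_j t_j+\prod_j s_j-2\prod_j(s_j\wedge t_j)|$ directly by $T^{d-1}\sum_j|t_j-s_j|$ via the same telescoping inequality, arriving at exactly your constant $k_1=T^{(d-1)/2}d^{1/4}$ and $\gamma=1/2$ in (\ref{euclidean-eq}); your convolution computation for (\ref{eq3.3}) fills in what the paper merely calls ``standard convolution formulas,'' with the same exponents $\beta=1$ and $\beta=1-1/p$. So that half is fine.

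The argument for the failure of the CLT for the base process $\{Y_t\colon t\in E\}$ has a genuine gap. First, a small error: for $t$ interior, $F_t(0)=P(Y_t\le 0)=\tfrac12$, so the jump of $t\mapsto F_t(0)$ at the boundary is from $\tfrac12$ to $1$, not from $0$ to $1$. More seriously, the inference ``$F_t(0)$ jumps at the boundary, therefore $(E\times\mathbb{R},d)$ is not totally bounded'' is a non sequitur: a single discontinuity of $t\mapsto F_t(0)$ only adds one point to the completion and is perfectly compatible with total boundedness. What actually destroys total boundedness (and it does fail here) is the existence of infinitely many pairwise $\epsilon$-separated points, e.g.\ $(t^{(k)},0)$ with $t^{(k)}=\epsilon^k(1,\dots,1)$: since the correlation of $Y_{t^{(j)}}$ and $Y_{t^{(k)}}$ is $(\epsilon^{k-j})^{d/2}$ for $j<k$, one gets $d^2((t^{(j)},0),(t^{(k)},0))=1-2P(Y_{t^{(j)}}\le 0,Y_{t^{(k)}}\le 0)\ge \tfrac12-\tfrac{1}{\pi}\arcsin(\epsilon^{d/2})$, uniformly bounded below. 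You assert the conclusion but supply no such computation, and your fallback --- that the $d=2$ obstruction in \cite{kkz} ``carries over verbatim'' to general $d$ --- is precisely the step that needs an argument rather than a citation. The paper's route is cleaner and complete: the restriction $W(r)=Y(r^{1/d}(1,\dots,1))$, $0\le r\le T^d$, is a standard Brownian motion started at $0$, so Lemma 5 of \cite{kkz} shows the empirical CLT fails over the countable-index subclass $\mathcal{C}_1=\{C_{r,x}\}$ built on the diagonal, and since $\mathcal{C}_1\subseteq\mathcal{C}$ the CLT over $\mathcal{C}$ must also fail. If you want to keep your total-boundedness route, you must exhibit the separated net explicitly; otherwise adopt the diagonal reduction.
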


\begin{proof} 
First we observe that if $0\leq s_j \leq t_j \leq T$ for $j=1,\cdots,d$, then for $d \geq 1$
\begin{equation}\label{product difference}
\prod_{j=1}^d t_j - \prod_{j=1}^d s_j \leq T^{d-1} \sum_{j=1}^d |t_j- s_j|.
\end{equation}
This elementary fact is obvious for $d=1$ with $T^0=1$, and for $d \geq 2$ it follows by an easy induction argument.
Moreover, the $L_2$-distance for $\{X_t:t \in E\}$ satisfies

$$
d^2_X(s,t) = |\prod_{j=1}^d t_j +\prod_{j=1}^d s_j -2\prod_{j=1}^d (s_j \wedge t_j) | ,
$$
and hence
$$
d^2_X(s,t) \leq T^{d-1}[\sum_{j=1}^{d} (t_j- t_j\wedge s_j) +\sum_{j=1}^{d} (s_j- t_j\wedge s_j)]= T^{d-1}\sum_{j=1}^{d} |t_j-  s_j|.
$$
Therefore,
$$
d^2_X(s,t) \leq dT^{d-1}\sum_{j=1}^{d} |t_j-  s_j|\frac{1}{d} \leq dT^{d-1}(\sum_{j=1}^{d} |t_j-  s_j|^2\frac{1}{d})^{\frac{1}{2}} = d^{\frac{1}{2}}T^{d-1}e(s,t),
$$
which implies $ d_X(s,t) \leq T^{\frac{d-1}{2}} d^{\frac{1}{4}}e^{\frac{1}{2}}(s,t)$, and hence (\ref{euclidean-eq}) holds. Either assumption for the density of $Z$ implies (\ref{eq3.3}) for a suitable $\beta$, and thus
Proposition~\ref{prop3.1} applies to show the CLT over $\mathcal{C}$ holds for $\{X(t): t \in [0,T]^d\}$ holds.

To see  why this CLT fails for the base process $\{Y(t): t \in [0,T]^d\}$, observe that the process $W(r)= Y(r^{\frac{1}{d}}(1,\cdots,1)), 0 \leq r \leq T^d,$ is a Brownian motion with $P(W(0)=0)=1$. Thus by Lemma 5 of \cite{kkz} we have $\{Y(t):t \in [0,T]^d\}$ fails the CLT over the the class of sets $\mathcal{C}_1= \{C_{r,x}: 0 \leq r \leq T^d, x \in \mathbb{R}\}$, where $C_{r,x}= \{ z \in D(E): z(r^{\frac{1}{d}}(1,\cdots,1))\leq x \}$. Since $\mathcal{C}_1 \subseteq \mathcal{C}$, it follows from that the CLT  for $Y$ over $\mathcal{C}$ must also fail.
\end{proof}

\subsection{ Compound Poisson process empirical CLT's over $\mathcal{C}$}
Here we examine the empirical CLT over $\mathcal{C}$ when our base process is an arbitrary compound Poisson process. This will be done in the next proposition by applying Theorem 3 of \cite{kkz}. 
We will see from its proof that the Gaussian process needed for this application can be taken to be a sample continuous Brownian motion, and the space of functions $D(E)$, when $E=[0,T]$, is the standard $D$-space of functions on $[0,T]$ which are right continuous on$[0,T)$ and have left limits on $(0,T]$. These examples  are somewhat surprising since the sample paths of the base process $\{Y(t)\colon\   t \in [0,T]\}$ have jumps, while those of significance in \cite{kkz} and the previous subsection were all sample path continuous. 

To define the base process  in these examples we let  $\{N(t)\colon\  0 \leq t < \infty\}$ be a Poisson process with parameter $\lambda \in (0,\infty)$, and jump times $\tau_1,\tau_2,\cdots$. As usual we assume $P(N(0)=0)=1$, and that the sample paths $\{N(t)\colon\  0 \leq t \leq \infty\}$  are right continuous and  nondecreasing. Also, let $\{Y_k\colon\  k \geq 1\}$
be i.i.d. real-valued random variables, independent of $\{N(t)\colon\  0 \leq t \leq \infty\}$, and without mass at zero. Then, $Y(t)$ is defined to be
zero on $[0,\tau_1)$, $Y_1$ on $[\tau_1,\tau_2)$, and $Y_1+\cdots + Y_k$ on $[\tau_k, \tau_{k+1})$ for $k \geq 1.$

\begin{prop}\label{prop3.2}
The empirical process built from i.i.d.\ copies of the compound Poisson process  $\{Y(t)\colon\   t \in E \}$ with parameter $\lambda \in (0,\infty)$ and $E=[0,T]$ satisfies the CLT over $\mathcal{C}$.
\end{prop}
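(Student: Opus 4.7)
The plan is to apply Theorem 3 of \cite{kkz}, the same tool used for Proposition \ref{prop3.1}. That theorem yields the empirical CLT over $\mathcal{C}$ once one exhibits a sample-continuous Gaussian majorant whose $L_2$-distance dominates the intrinsic pseudo-distance $d$ of (\ref{eq1.1}), together with the associated entropy/regularity condition on the class of indicators. The ambient path space must now be the Skorohod space $D[0,T]$ of c\`adl\`ag functions, since $Y$ has jumps, and, as announced in the preamble to the proposition, the intended Gaussian majorant is a scaled Brownian motion on $[0,T]$.

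The critical estimate exploits the pure-jump structure of $Y$: on the event $\{N(t)-N(s)=0\}$ the process is constant on $[s,t]$, so $Y_s=Y_t$ and hence for $s\le t$ and any $x,y\in\mathbb{R}$,
\[
\{Y_s\le x\}\;\triangle\;\{Y_t\le y\}\;\subseteq\;\{N(t)-N(s)\ge 1\}\;\cup\;\{Y_t\in(x\wedge y,\,x\vee y]\}.
\]
Since $P(N(t)-N(s)\ge 1)=1-e^{-\lambda(t-s)}\le\lambda(t-s)$, this gives
\[
d((s,x),(t,y))^2\;\le\;\lambda|t-s|\;+\;|F_t(x)-F_t(y)|.
\]
The first summand is exactly $\E(B(t)-B(s))^2$ for the Brownian motion $B$ with $\E(B(t)^2)=\lambda t$, which is the proposed majorant in the $t$-direction. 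The $x$-variable at a fixed time $t$ is controlled through the monotonicity of $x\mapsto I(Y_t\le x)$ and the VC-subgraph structure of $\{I_{(-\infty,x]}:x\in\mathbb{R}\}$, together with an envelope $\Phi(x)=\sup_{t\in E}F_t(x)$ majorizing the $x$-increments of the $F_t$.

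The main obstacle is the atom of each $F_t$ at $x=0$ of mass $P(N(t)=0)=e^{-\lambda t}>0$, which forces $\Phi$ to be discontinuous at $0$ and so prevents the second term above from being uniformly continuous in $x$. I would handle this by splitting the index along $x=0$: the one-parameter subfamily $\{C_{t,0}:t\in E\}$ is an elementary Donsker class controlled solely by the Brownian majorant, since $d((s,0),(t,0))^2\le\lambda|t-s|$; on each open half-line $\{x<0\}$ and $\{x>0\}$ the envelope $\Phi$ is continuous and the full majorant construction applies. Verifying the entropy hypothesis of Theorem 3 of \cite{kkz} on each piece, which rests on the VC structure of the indicators in $x$ together with exponential tail bounds on $N(T)$ (and hence on the total variation of $Y$ on $[0,T]$), and then patching, completes the proof.
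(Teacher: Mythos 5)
There is a genuine gap, and it lies in the claim that the only obstruction in the $x$-direction is the atom at $x=0$. The marginals of a compound Poisson process are $F_t=\sum_{k\ge 0}e^{-\lambda t}(\lambda t)^k(k!)^{-1}\mu^{*k}$, where $\mu$ is the law of $Y_1$, and the only standing assumption on $\mu$ is that it has no mass at zero. If, say, $Y_k\equiv 1$, then $Y(t)=N(t)$ and $F_t$ has atoms at every nonnegative integer; if $\mu$ is supported on the rationals, the atoms of $F_t$ are dense. So your envelope $\Phi(x)=\sup_{t\in E}F_t(x)$ is in general discontinuous on a countable, possibly dense, set, your assertion that $\Phi$ is continuous on each of $\{x<0\}$ and $\{x>0\}$ is false, and the proposed three-piece split plus patching does not isolate the discontinuities. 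Relatedly, your bound $d((s,x),(t,y))^2\le\lambda|t-s|+|F_t(x)-F_t(y)|$ is correct but cannot be dominated by the $L_2$-distance of any sample-continuous Gaussian process in the $x$-variable across an atom, so the strategy of building a continuous Gaussian majorant for $d$ in both coordinates is structurally blocked, not merely at $x=0$.

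The paper's proof avoids this entirely. Theorem 3 of \cite{kkz} is not invoked through a majorant for the distance in (\ref{eq1.1}); its hypothesis is the $L$-condition, which is deliberately phrased in terms of the distributional transform $\tilde F_t(x)=F(t,x^-)+V\bigl(F(t,x)-F(t,x^-)\bigr)$ precisely so that discontinuous marginals cause no trouble. The verification is then a one-line observation in the $x$-direction: $\tilde F_t(Y(s))-\tilde F_t(Y(t))=0$ on the event that $N$ has no jump between $s$ and $t$, so with $\rho^2(s,t)=4(\lambda\vee 1)|t-s|$ the quantity $\sup_t P\bigl(\sup_{\rho(s,t)\le\epsilon}|\tilde F_t(Y(s))-\tilde F_t(Y(t))|>\epsilon^2\bigr)$ is bounded by $1-\exp\{-\lambda\epsilon^2/(2(\lambda\vee 1))\}\le\epsilon^2$, which is the $L$-condition. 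No continuity or VC/entropy analysis in $x$ is needed, and no decomposition of the index set is required. (A further small point: Proposition \ref{prop3.1} is proved via Theorem 5 of \cite{kkz}, not Theorem 3, so the two propositions do not in fact use the same tool in the way your opening sentence suggests.) To repair your argument you would need to replace the envelope/patching step by the distributional-transform formulation, at which point you recover the paper's proof.
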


\begin{proof} 
The proof follows by applying Theorem 3 of \cite{kkz}. This is accomplished by showing  $\{Y(t)\colon\   t \in E \}$ satisfies the $L$ condition of \cite{kkz} when the Gaussian process involved is Brownian motion and the $\rho$ distance is a multiple of standard Euclidean distance on $[0,T]$. Since the distribution function of $Y(t)$ is not necessarily continuous, the $L$-condition involves distributional transforms of the the distribution functions 
$F(t,x)=P(Y(t) \leq x)$ denoted by $\tilde F_t(x)$. They are defined for $t \in E,x \in \mathbb{R}$ as
$$
\tilde F_t(x)= F(t,x^-) +V(F(t,x)-F(t,x^-)),
$$
where $V$ is a uniform random variable on $[0,1]$ independent of the process $\{Y(t)\colon\  t \in E \}$.  

To verify the $L$-condition for the $Y$ process, let $\{H(t)\colon\ 0 \leq t < \infty\}$ be a sample continuous Brownian motion with $P(H(0)=0)=1$ satisfying
$$
\rho^2(s,t) =\E((H(s)-H(t))^2) = 4(\lambda\vee1)|t-s|.
$$
Then, for $\epsilon >0$
\begin{align*}
\Lambda &\equiv \sup_{t \in [0,T]} P\Big(\sup_{\{s\colon\ \rho (s,t) \le \epsilon\}}|\tilde{F}_t(Y(s)) - \tilde{F}_t(Y(t))| > \epsilon^2\Big)\\
& \leq \sup_{t \in [0,T]} P\Big(\sup_{\{s\colon\ \rho (s,t) \le \epsilon\}}|Y(s) - Y(t)| > 0\Big)\\
&= \sup_{t \in [0,T]} \Big[1 -P\Big(\sup_{\{s\colon\ \rho (s,t) \le \epsilon\}}|Y(s) - Y(t)| = 0\Big)\Big].
\end{align*}
Since $Y(s)-Y(t) =0$ whenever $N(s)-N(t)=0$, and for $t \in [0,T]$ fixed
\begin{align*}
&\left\{\sup_{\{s\colon\ \rho (s,t) \le \epsilon\}}|N(s) - N(t)| = 0\right\}\\
 =~ &\left\{N\left(\left(t+\frac{\epsilon^2}{4(\lambda \vee 1)}\right)\wedge T\right) -N\left(\left(t-\frac{\epsilon^2}{4(\lambda \vee 1)}\right)\vee0\right)=0\right\},
\end{align*}
it follows that
\begin{align*}
P\bigg( N\left(\left(t+\frac{\epsilon^2}{4(\lambda \vee 1)}\right)\wedge T\right) &-N\left(\left(t-\frac{\epsilon^2}{4(\lambda \vee 1)}\right)\vee0\right)=0\bigg)\\
&= P(\sup_{\{s\colon\ \rho (s,t) \le \epsilon\}}|N(s) - N(t)| = 0)\\
&\leq P(\sup_{\{s\colon\ \rho (s,t) \le \epsilon\}}|Y(s) - Y(t)| = 0).
\end{align*}
Now
\begin{align*}
&P\left( N\left(\left(t+\frac{\epsilon^2}{4(\lambda \vee 1)}\right)\wedge T\right) -N\left(\left(t-\frac{\epsilon^2}{4(\lambda \vee 1)}\right)\vee0\right)=0\right)\\\
&\quad \geq \exp\left\{- \frac{\lambda \epsilon^2}{2(\lambda\vee1)}\right\},
\end{align*}
and hence for $0< \epsilon < \epsilon_0$ we have
$$
\Lambda \leq 1-\exp\left\{-\frac{ \epsilon^2}{2}\right\} \leq \epsilon^2.
$$
Taking $L$ suitably large we have for all $\epsilon>0$ that $\Lambda  \leq L\epsilon^2$, and hence the $L$-condition holds for the compound Poisson process $Y$, completing the proof of the proposition.
\end{proof}

\begin{rem}\label{rem3.1}
Let $Z$ be a random variable independent of both $\{N(t)\colon\  0 \leq t < \infty\}$ and $\{Y_k\colon\  k \geq 1\}$, and define
$$
X(t)=Y(t) +Z, t \in [0,T].
$$
Since the $L$-condition for the input process $\{X(t)\colon\  t \in E\}$ involves only its increments, and those are identical to those of the base process $\{Y(t)\colon\  t \in E\}$,
the argument above implies the $L$-condition also  holds for  $\{X(t)\colon\  t \in E\}$ Therefore, the empirical process built from i.i.d.\ copies of $X$ satisfies the CLT on $\mathcal{C}$.
\end{rem}

\subsection{Empirical process CLT's over $\mathcal{C}$ for other independent increment processes and martingales}

The processes we study here are either martingales, or stationary independent increment processes. There is some overlap  between these examples and the compound Poisson processes of the previous sub-section, as such processes have stationary independent increments, and could also be martingales. However, it is easy to check that there are examples which fit into one and only one of the classes we study. 

Let $E=[0,T]$, and assume $\{Y(t): t \geq 0 \}$ is a stochastic process whose sample paths are right continuous , with left hand limits on $[0,\infty)$, and satisfying $P(Y(0)=0)=1$.We say that  $\{Y(t): t \geq 0\}$ has $L_p$-increments that are
Lip-$\beta$ on $E$ if for some $p \in (0,1]$ and for
all $s,t \in E$ there is a $\beta \in (0,1]$ and $C<\infty$ such that
\begin{equation}\label{eq3.12}
\E(|Y(t)-Y(s)|^p) \leq C|t-s|^{\beta}. 
\end{equation}
For example, if  $\{Y(t): t \geq 0 \}$ is a strictly stable process with stationary independent increments and index $r \in (0,2]$ , then for $r \in (1,2]$ we have $\E(|Y(t)|)= t^{\frac{1}{r}}\E(|Y(1)|)$ and hence 
$$
\E(|Y(t)-Y(s)|) = |t-s|^{\frac{1}{r}}\E(|Y(1)|),
$$
which implies it has $L_1$-increments that are Lip-${\frac{1}{r}}$. Of course, it is also a martingale when $r \in (1,2]$. If $0<r \leq 1$ , then for $0<p<r$ we have
$$
\E(|Y(t)-Y(s)|^p) = |t-s|^{\frac{p}{r}}\E(|Y(1)|^p),
$$ 
which implies it has $L_p$-increments that are Lip-${\frac{p}{r}}$.
 
If $\{Y(t): t \geq 0\}$ is a square integrable martingale with $\lambda(t)=\E(Y^2(t)), t  \geq 0,$ then for  $0 \leq s \leq t$ the orthogonality of the increments of $\{Y(t): t \geq 0\}$  implies
\begin{equation}\label{eq3.13}
\E((Y(t)-Y(s))^2) = \lambda (t) -\lambda(s). 
\end{equation}
Hence, if $\lambda(\cdot)$ is Lip-$\gamma$ on $E$, then (\ref{eq3.13}) implies (\ref{eq3.12}) with $p=1, \beta=\gamma/2$. In addition, if $\{Y(t): t \geq 0 \}$ also has stationary, independent increments with $P(Y(0) =0)=1$ and
$\lambda(t) = \E(|Y(t)|)<\infty, t \geq 0,$ then for $ s,t \in E$ we have
\begin{equation}\label{eq3.14}
\E(|Y(t)-Y(s)|) =\E(|Y(|t-s|)|) \leq \lambda (|t-s|).
\end{equation}
Therefore, if $\lambda(t) \leq Ct^{\beta}$ for $t \in [0,\delta]$ and  some $\delta>0, \beta \in (0,1]$, then it is easy to check that (\ref{eq3.14}) implies (\ref{eq3.12}) with $p=1$ and the given $\beta$ for all $s,t \in E$, and a possibly larger constant C.

We also assume $Z$ is a random variable independent of $\{Y(t): t \geq 0 \}$ with   density $g(\cdot)$ on $\mathbb{R}$ such that
\begin{equation}\label{eq3.15}
k= \sup_{x \in \mathbb{R}} |g(x)|< \infty~\rm {or}~ g \in L_a(\mathbb{R})
\end{equation}
for some $a \in (1,\infty)$.
Let $X(t)= Z + Y(t), t \geq 0$, and denote the distribution function of $X(t), t \geq 0$ by $F_t(x)$. Then, if $g$ is uniformly bounded
\begin{equation}\label{eq3.16}
 \sup_{ t \in E}|F_t(x)- F_t(y)| \leq k|x-y|, x,y \in \mathbb{R},
\end{equation}
and if $g \in L_a(\mathbb{R})$ we have a $\tilde k< \infty$
\begin{equation}\label{eq3.17} 
\sup_{ t \in E}|F_t(x)- F_t(y)| \leq \tilde k |x-y|^{1-\frac{1}{a}}, x,y \in \mathbb{R}.
\end{equation}

\begin{prop}\label{prop3.3}
Let $E=[0,T]$, and assume $\{Y(t): t \geq 0 \}$ is a stochastic process whose sample paths are right continuous, with left hand limits on $[0,\infty)$, and satisfying $P(Y(0)=0)=1$. Furthermore, assume
 $\{Y(t)\colon\   t \in E \}$ is a martingale whose $L_1$-increments are Lip-$\beta$ for some $\beta \in (0,1]$ , or a stationary independent increments process satisfying (\ref{eq3.12}) for some $p \in (0,1)$ and $\beta \in (0, 1]$. Let $X(t)= Z + Y(t), t \geq 0$, where  $Z$ is a random variable independent of $\{Y(t): t \geq 0 \}$ and having  density $g(\cdot)$ on $\mathbb{R}$ satisfying (\ref{eq3.15}).
Then, the empirical process built from i.i.d. copies of  $\{X(t)\colon\   t \in E \}$ satisfies the CLT over $\mathcal{C}$.
\end{prop}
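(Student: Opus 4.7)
The plan is to reduce the proposition to an application of Theorem 3 of \cite{kkz}, exactly as in the proof of Proposition~\ref{prop3.2}. What must be checked is the $L$-condition for the input process $\{X(t)\colon t \in E\}$ with respect to a suitably chosen sample continuous Gaussian process $\{H(t)\colon t \in E\}$ with $L_2$-distance $\rho$. Since $Z$ has a density, each $F_t$ is continuous on $\mathbb{R}$, and hence the distributional transform $\tilde F_t$ coincides with $F_t$; moreover, by (\ref{eq3.16})--(\ref{eq3.17}) we have $\sup_{t \in E}|F_t(x)-F_t(y)| \le K|x-y|^{\beta'}$ with $\beta' = 1$ if $g$ is bounded and $\beta' = 1 - 1/a$ if $g \in L_a(\mathbb{R})$. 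Because $X(s)-X(t) = Y(s)-Y(t)$, the event inside the $L$-condition is contained in $\{|Y(s)-Y(t)| > c_1\epsilon^{2/\beta'}\}$ for a constant $c_1$, so the task reduces to showing
\[
\sup_{t \in E}P\Bigl(\sup_{s\colon\rho(s,t)\le\epsilon}|Y(s)-Y(t)| > c_1\epsilon^{2/\beta'}\Bigr) \le L\epsilon^{2}
\]
for all sufficiently small $\epsilon>0$.

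I would take $H$ to be a sample continuous $\eta$-fractional Brownian motion on $[0,T]$ for a small $\eta\in(0,1)$ to be fixed later, so $\rho(s,t)=|s-t|^{\eta}$ and $\rho(s,t)\le\epsilon$ becomes $|s-t|\le\epsilon^{1/\eta}$. In the martingale case ($p=1$) I would bound the oscillation of $Y$ on $[(t-\delta)\vee 0,(t+\delta)\wedge T]$ centered at $t$ by twice the oscillation centered at the left endpoint, then apply Doob's maximal inequality to the shifted martingale and combine with the Lip-$\beta$ control of $L_1$-increments from (\ref{eq3.12}). Taking $\delta=\epsilon^{1/\eta}$ and $\lambda=c_1\epsilon^{2/\beta'}$ this produces a bound of order $\delta^{\beta}/\lambda=\epsilon^{\beta/\eta-2/\beta'}$, which is dominated by $L\epsilon^{2}$ once $\eta$ is chosen small enough that $\beta/\eta-2/\beta'\ge 2$.

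The stationary independent increments case with $p<1$ is the main obstacle, because $Y$ need not be integrable (for instance, stable processes of index less than one) and Doob is unavailable. Here I would invoke a continuous-time Ottaviani-type maximal inequality: approximate the supremum of $|Y(s)-Y(t)|$ over $[(t-\delta)\vee 0,(t+\delta)\wedge T]$ by a dyadic refinement of grids, apply the classical Ottaviani inequality to the corresponding finite collection of independent increments, and pass to the limit using the right-continuity of paths. Combined with Markov's inequality and (\ref{eq3.12}), this yields
\[
P\Bigl(\sup_{|s-t|\le\delta}|Y(s)-Y(t)|>\lambda\Bigr) \le c_{2}\sup_{|r|\le 2\delta}P(|Y(r)|>\lambda/c_{3}) \le c_{4}\delta^{\beta}/\lambda^{p},
\]
valid once $\delta$ is small enough that $c_{2}\sup_{|r|\le 2\delta}P(|Y(r)|>\lambda/c_{3})\le 1/2$. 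With the same choices $\delta=\epsilon^{1/\eta}$ and $\lambda=c_{1}\epsilon^{2/\beta'}$ the right-hand side becomes a multiple of $\epsilon^{\beta/\eta-2p/\beta'}$, which is $\le L\epsilon^{2}$ as soon as $\eta$ is chosen so that $\beta/\eta-2p/\beta'\ge 2$. In either case the $L$-condition is verified for $X$ relative to $H$, and Theorem 3 of \cite{kkz} delivers the empirical CLT over $\mathcal{C}$. The bulk of the remaining work is routine bookkeeping around the choice of $\eta$; the genuine subtlety lies in the Ottaviani step, where the lack of moments in the stable-type examples must be accommodated by choosing $p<r$ and controlling the boundary-of-interval effects carefully.
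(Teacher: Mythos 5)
Your proposal is correct and follows essentially the same route as the paper: reduce to the $L$-condition of Theorem 3 of \cite{kkz} against a fractional Brownian motion with $\rho(s,t)=|s-t|^{\eta}$, use the Lipschitz control (\ref{eq3.16})--(\ref{eq3.17}) to pass from $F_t(X_s)-F_t(X_t)$ to $Y_s-Y_t$, treat the martingale case by Doob's maximal inequality and the stationary independent increments case by a L\'evy-type maximal inequality followed by Markov with the $p$-th moment from (\ref{eq3.12}), and finally tune $\eta$ (the paper takes $\theta=\beta/4$, resp.\ $\theta=\beta/(2+2p)$) so the resulting exponent is at least $2$. The only deviation is that the paper invokes Montgomery-Smith's unconditional maximal inequality (passing from dyadic grids to the cadlag supremum exactly as you describe) where you use the classical Ottaviani inequality with a smallness precondition, which, as you note, is verified for small $\epsilon$ by the same Markov estimate, so the difference is immaterial.
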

 
\begin{proof} 
Let $\rho(s,t)=|s-t|^{\theta}$. Then, $\rho$ is the $L_2$-distance of a $\theta$-fractional Brownian motion on $E$, and the proposition follows from Theorem 3 of \cite{kkz} provided we verify the $L$-condition for $\{X(t)\colon$ $t \in E\}$ with respect to $\rho$ and an appropriately chosen $\theta$. That is, since the distribution functions $F_t(\cdot)$ have a density, they are continuous, and hence it suffices to show for an appropriate $\theta>0$ 
there is a constant $L < \infty$ such that for every $\epsilon> 0$
\begin{equation}\label{eq3.18}
\sup_ {t \in E} P(\sup_{\{s:s \in E, \rho(s,t) \leq \epsilon\}}|F_t(X_s) -F_t(X_t)| > \epsilon^2) \leq L\epsilon^2.
\end{equation}

We prove the $L$-condition holds assuming the density $g$ of $Z$ is uniformly bounded, and hence we have (\ref{eq3.16}) holding. The proof when $g \in L_a(\mathbb{R})$ is essentially the same, only the algebra changes, and hence the details are left to the reader.

First we examine the situation when $\{Y(t): t \geq 0 \}$ is a martingale satisfying (\ref{eq3.12}) with $p=1$ and some $\beta \in (0,1]$. Applying (\ref{eq3.16}) to (\ref{eq3.18}) we then have
\begin{equation}\label{eq3.19}
\sup_ {t \in E} P(\sup_{\{s:s \in E, \rho(s,t) \leq \epsilon\}}|F_t(X_s) -F_t(X_t)| > \epsilon^2)  \leq A_{\epsilon} + B_{\epsilon}, 
\end{equation}
where
$$
A_{\epsilon} = \sup_{ t \in E} P(\sup_{\{s: s \in [t, (t+\epsilon^{\frac{1}{\theta}})\wedge T]\}}|X_s -X_t| > \frac{\epsilon^2}{2k}),
$$
and
$$
B_{\epsilon}= \sup_{t \in E}P(\sup_{\{s: s \in [ (t-\epsilon^{\frac{1}{\theta}})\vee 0, t ]\}}|X_s -X_t| > \frac{\epsilon^2}{2k}).
$$
Now
$$
A_{\epsilon} = \sup_{ t \in E} P(\sup_{\{s: s \in [t, (t+\epsilon^{\frac{1}{\theta}})\wedge T]\}}|Y_s -Y_t| > \frac{\epsilon^2}{2k}),
$$
and hence Doob's martingale maximal inequality implies
\begin{equation}\label{eq3.20}
A_{\epsilon} \leq \sup_{ t \in E} 2k\epsilon^{-2} \E(|Y_{(t + \epsilon^{\frac{1}{\theta}}) \wedge T} -Y_t|) \leq 2kC\epsilon^{-2+\frac{\beta}{\theta}},
\end{equation}
where the last inequality follows from (\ref{eq3.12}) with $p=1$. We also have
\begin{align*}
B_{\epsilon}&\leq  \sup_{t \in E}P(|Y_{ (t-\epsilon^{\frac{1}{\theta}})\vee 0 } -Y_t| > \frac{\epsilon^2}{4k})\\
&\quad+ \sup_{t \in E}P(\sup_{\{s: s \in [ (t-\epsilon^{\frac{1}{\theta}})\vee 0, t ]\}}|Y_s - Y_{ (t-\epsilon^{\frac{1}{\theta}})\vee 0 }| > \frac{\epsilon^2}{4k}),
\end{align*}
and using Markov's inequality, the martingale maximal inequality, and (66) with $p=1$ as before, we have
\begin{equation}\label{eq3.21}
B_{\epsilon}\leq 8kC\epsilon^{-2 +\frac{\beta}{\theta}}. 
\end{equation}
Combining (\ref{eq3.19}),(\ref{eq3.20}), and (\ref{eq3.21}) we have 
\begin{equation}\label{eq3.22}
\sup_ {t \in E} P(\sup_{\{s:s \in E, \rho(s,t) \leq \epsilon\}}|F_t(X_s) -F_t(X_t)| > \epsilon^2)  \leq 10kC\epsilon^{-2 +\frac{\beta}{\theta}}.
\end{equation}
Given our assumption that (\ref{eq3.12}) holds with $p=1$ and some $\beta \in (0,1]$, we take $\theta = \frac{\beta}{4}$, and hence (\ref{eq3.21}) implies we have the $L$-condition in (\ref{eq3.18}) with
$L= 10kC< \infty.$

Now we assume $\{Y(t): t \geq 0 \}$ is a process with stationary independent increments satisfying (\ref{eq3.12}) with $p \in (0,1)$ and some $\beta \in (0,1]$. Applying (\ref{eq3.16}) to (\ref{eq3.18}) we again have (\ref{eq3.19}), and as before
\begin{equation}\label{eq3.23}
A_{\epsilon} = \sup_{ t \in E} P(\sup_{\{s: s \in [t, (t+\epsilon^{\frac{1}{\theta}})\wedge T]\}}|Y_s -Y_t| > \frac{\epsilon^2}{2k}),
\end{equation}

Since $\{Y(t): t \in E \}$ is a process with stationary independent increments and cadlag sample paths, an application of Montgomery-Smith's maximal inequality in \cite{monty-comparison} implies
$$
A_{\epsilon} \leq 3 \sup_{ t \in E} P(|Y_{(t + \epsilon^{\frac{1}{\theta}})\wedge T} -Y_t| > \frac{\epsilon^2}{20k}).
$$
This maximal inequality is stated for sequences of i.i.d. random variables, but since $\{Y(t): t \in E \}$ is a process with stationary independent increments and cadlag sample paths, for any integer $n$ we can partition any subinterval I of E into $2^n$ equal subintervals and apply \cite{monty-comparison} to the partial sums formed from increments over each of these subintervals. One can add auxiliary i.i.d. increments to form a sequence, but that is not necessary as at the $n^{th}$each stage we need only work with  the partial sums of the $2^n$ increments of that stage. We then use \cite{monty-comparison} for an upper bound, and then pass via an increasing limit to what is needed, i.e. the desired upper bound is fixed, and hence is an upper bound for the limit.

Thus by Markov's inequality, and our assumption of (\ref{eq3.12}), we have
\begin{equation}\label{eq3.24}
A_{\epsilon}  \leq  3(20k\epsilon^{-2})^p \sup_{ t \in E} \E(|Y_{(t + \epsilon^{\frac{1}{\theta}})\wedge T} -Y_t|^p) \leq 3C (20k\epsilon^{-2})^p \epsilon^{\frac{\beta}{\theta}}. 
\end{equation}
We also have
\begin{align*}
B_{\epsilon}&\leq  \sup_{t \in E}P(|Y_{ (t-\epsilon^{\frac{1}{\theta}})\vee 0 } -Y_t| > \frac{\epsilon^2}{4k})\\
&\quad+ \sup_{t \in E}P(\sup_{\{s: s \in [ (t-\epsilon^{\frac{1}{\theta}})\vee 0, t ]\}}|Y_s - Y_{ (t-\epsilon^{\frac{1}{\theta}})\vee 0 }| > \frac{\epsilon^2}{4k}),
\end{align*}
and using Montgomery-Smith's maximal inequality again we have
$$
B_{\epsilon}\leq 4 \sup_{t \in E}P(|Y_{ (t-\epsilon^{\frac{1}{\theta}})\vee 0 } -Y_t| > \frac{\epsilon^2}{40k}).
$$
Thus by Markov's inequality and (\ref{eq3.12})
\begin{equation}\label{eq3.25}
B_{\epsilon}\leq 4(40k\epsilon^{-2})^p \sup_{ t \in E} \E(|Y_{(t + \epsilon^{\frac{1}{\theta}})\wedge T} -Y_t|^p) \leq 4C (40k\epsilon^{-2})^p \epsilon^{\frac{\beta}{\theta}}.  
\end{equation}
Combining (\ref{eq3.19}),(\ref{eq3.24}), and (\ref{eq3.25}) we have 
$$
\sup_ {t \in E} P(\sup_{\{s:s \in E, \rho(s,t) \leq \epsilon\}}|F_t(X_s) -F_t(X_t)| > \epsilon^2)  \leq 7C (40k)^p \epsilon^{\frac{\beta}{\theta}-2p},
$$
and hence the $L$-condition holds with $L =7C (40k)^p$ provided $\theta = \frac{\beta}{2+2p}$.
\end{proof}

\begin{cor}\label{cor3.3}
Let $E=[0,T]$, and assume $\{Y(t): t \geq 0 \}$ is a strictly stable process of index $r \in (0,2]$ with stationary independent increments, cadlaq sample paths on $[0,\infty)$, and such that  $P(Y(0)=0)=1$.   Let $X(t)= Z + Y(t), t \geq 0$, where  $Z$ is a random variable independent of $\{Y(t): t \geq 0 \}$ and having  density $g(\cdot)$ on $\mathbb{R}$ satisfying (\ref{eq3.15}).
Then, the empirical process built from i.i.d. copies of  $\{X(t)\colon\   t \in E \}$ satisfies the CLT over $\mathcal{C}$. Moreover, except for the degenerate cases when $r=1$ and $\{Y(t): t \geq 0 \}$ is pure drift, or $Y(t)$ is degenerate at zero for all $t \in E$, the empirical CLT over $\mathcal{C}$ fails for these $\{Y(t): t \in E \}$. 
\end{cor}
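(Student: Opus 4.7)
The plan is to treat the positive and negative assertions separately. The positive assertion (CLT over $\mathcal{C}$ for $X(t)=Y(t)+Z$) follows from Proposition~\ref{prop3.3} once the appropriate Lipschitz condition on the increments of $Y$ is verified, and the negative assertion (failure of the CLT over $\mathcal{C}$ for the base process $Y$ in the non-degenerate cases) follows from Lemma 5 of \cite{kkz}, exactly as in the final step of the proof of Corollary~\ref{cor3.2}.

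For the positive part, strict stability combined with stationary independent increments and $P(Y(0)=0)=1$ yields the key identity $Y(t)-Y(s)\stackrel{d}{=}|t-s|^{1/r}Y(1)$ for all $s,t\in E$. I would split on the index: for $r\in(1,2]$, we have $\E|Y(1)|<\infty$ and $Y$ is a mean-zero martingale, so the $L_1$-increments are Lip-$(1/r)$ and the martingale branch of Proposition~\ref{prop3.3} applies with $\beta=1/r\in[1/2,1)$. For $r\in(0,1]$, pick $p\in(0,1)$ with $p<r$ when $r<1$, or any $p\in(0,1)$ when $r=1$, so that $\E(|Y(1)|^p)<\infty$ and the scaling identity gives (\ref{eq3.12}) with $\beta=p/r\in(0,1]$; the stationary independent increments branch of Proposition~\ref{prop3.3} then applies. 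The degenerate cases ($Y\equiv 0$, and $r=1$ with $Y(t)=\mu t$) fall under the independent increments branch trivially with any $p\in(0,1)$.

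For the negative part, I would assume we are not in one of the two excluded degenerate cases. Then $P(Y(0)=0)=1$ while $Y(t_0)$ has a non-trivial law for some $t_0\in(0,T]$. This is precisely the hypothesis of Lemma 5 of \cite{kkz}: because $F(0,\cdot)$ is concentrated at the single point $0$, the empirical CLT over the one-parameter subfamily $\mathcal{C}_1=\{C_{0,x}:x\in\mathbb{R}\}\subseteq\mathcal{C}$ already fails, and hence so does the CLT over the larger class $\mathcal{C}$, exactly as argued at the end of the proof of Corollary~\ref{cor3.2}.

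The main (and only) subtlety lies in the low-index range $r\le 1$, where $\E|Y(1)|$ may be infinite and $Y$ need not be a martingale; once the correct $p<r\wedge 1$ is selected, both halves of Proposition~\ref{prop3.3} combine to cover the full range $r\in(0,2]$ and the corollary reduces to bookkeeping plus the citation of Lemma 5 of \cite{kkz}.
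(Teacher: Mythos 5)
Your positive half is correct and is exactly the paper's argument: the scaling identity $Y(t)-Y(s)\stackrel{d}{=}|t-s|^{1/r}Y(1)$ gives (\ref{eq3.12}) with $p=1$, $\beta=1/r$ (martingale branch of Proposition~\ref{prop3.3}) for $r\in(1,2]$, and with any $p<r$, $\beta=p/r$ (stationary independent increments branch) for $r\in(0,1]$.

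The negative half has a genuine gap, in two respects. First, the mechanism you propose cannot work: over the subfamily $\mathcal{C}_1=\{C_{0,x}\colon x\in\mathbb{R}\}$ the empirical process is identically zero, because $P(Y(0)=0)=1$ forces $F_n(0,x)=I(0\le x)=F(0,x)$ for every sample; the CLT over that subfamily therefore holds trivially with degenerate limit, and the degeneracy of $F(0,\cdot)$ by itself proves nothing. Any failure has to be extracted from the behavior of the process at small positive times, uniformly in $t$. Second, Lemma 5 of \cite{kkz} is not the right tool here: as the paper uses it at the end of Corollary~\ref{cor3.2}, it is a law-of-the-iterated-logarithm argument for Brownian motion (applied there to the Brownian motion obtained by restricting the sheet to the diagonal), and the paper explicitly remarks that this LIL route covers only $r=2$ and fractional Brownian motions. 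For general $r\in(0,2]$ the paper instead proves that the countable class $\mathcal{C}_{\mathbb{Q}}$ almost surely shatters any $n$ independent sample paths, i.e.\ $P(\Delta^{\mathcal{C}_{\mathbb{Q}}}(Y_1,\dots,Y_n)=2^n)=1$ for every $n$: one uses that the time-inverted process $S(t)=t^{2/r}Y(1/t)$ is again strictly stable with stationary independent increments, applies the Hewitt-Savage zero-one law to conclude that every ordering $Y_{j_1}(1/k)<\cdots<Y_{j_n}(1/k)$ occurs infinitely often in $k$ with probability one, and then separates any prescribed subset of the sample by a rational threshold at a time $1/k$. The failure of the CLT then follows from Lemma 7 of \cite{kkz}. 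This combinatorial step is the substantive content of the paper's proof and is absent from your proposal.
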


\begin{proof}
The assertions about the CLT holding are immediate consequences of  
Proposition \ref{prop3.3} once we check that $\{Y(t): t \geq 0 \}$ satisfies (\ref{eq3.12}). This follows from the comments immediately following (\ref{eq3.12}), and hence this part of the proof is established. 

To show that the CLT  fails for the strictly stable stationary independent increment processes specified follows from an application of the Hewitt-Savage zero-one law, and the scaling property of such processes. The case $r=2$ was previously established in \cite{kkz} using a law of the iterated logarithm argument, which also applied to all fractional Brownian motions.  Here we exploit the stationary independent increments of the processes to obtain a proof, and there are two other facts we need to emphasize at this point. The first is that $\{Y(t): t \geq 0 \}$  strictly stable of index $r \in (0,2]$ implies $\{S(t)= t^{\frac{2}{r}}Y(\frac{1}{t}): t \geq 0\}$ is also strictly stable of index $r$. The second is that the process $\{Y(t): t \geq 0 \}$ being non-degenerate and strictly stable, implies the random variables $Y(t)$ and $S(t)$ have probability densities for all $t>0$.

Now fix $n \geq 1$ and let $Y_1,\cdots,Y_n$ be independent copies of $Y$. Let $Q$ denote the rational numbers. Then, setting $S_j(t)= t^{\frac{2}{r}}Y(\frac{1}{t}), t > 0, j = 1,\cdots,n$, we have
\begin{align}\label{eq3.26}
P(\rm{card}\{Y_1(t),\cdots,Y_n(t)\}&= \rm{card}\{S_1(t),\cdots,S_n(t)\}=n\\
&\quad \text{for all }t\in Q\cap(0,\infty))=1.\notag
\end{align}
Also, let $\mathcal{C}_Q$ denote the countable subclass of $\mathcal{C}$ given by $\mathcal{C}_Q= \{C_{t,y} \in \mathcal{C}\colon$ $t, y \in Q\}$. Then, as in the proof of Lemma 7 in \cite{kkz}, to show the empirical CLT fails for $\{Y(t): t \in E\}$ it suffices to show that 
\begin{equation}\label{eq3.27}
P(\Delta^{\mathcal{C}_Q}(Y_1,\cdots,Y_n)= 2^n)=1,
\end{equation}
where $\Delta^{\mathcal{C}_Q}(Y_1,\cdots,Y_n)=\rm~{card}\{C\cap \{Y_1,\cdots,Y_n\}: C \in \mathcal{C}_Q\}$. Hence it suffices to show for every $r, 0\le r \leq n,$ and $\{i_1,\cdots, i_r\} \subseteq \{1,\cdots,n\}$ that
\begin{equation}\label{eq3.28}
P(\{Y_{i_1},\cdots,Y_{i_r}\} \in \Delta^{\mathcal{C}_Q}(Y_1,\cdots,Y_n))=1.
\end{equation}
Next define for every permutation $\pi=(j_1,\cdots,j_n)$ of $\{1,\cdots,n\}$ the event
\begin{equation}\label{eq3.29}
E_{\pi} =\{ \omega: Y_{j_1}(\frac{1}{k},\omega)< \cdots < Y_{j_n}(\frac{1}{k},\omega)~i.o. ~\rm {in}~k \geq 1\},
\end{equation}
and observe that
\begin{equation}\label{eq3.30}
E_{\pi} =\{ \omega: S_{j_1}(k,\omega)< \cdots < S_{j_n}(k,\omega)~i.o. ~\rm {in}~k \geq 1\}.
\end{equation}
Since there are only finitely many permutations and (\ref{eq3.26}) holds, $P(E_{\pi})>0$ for some $\pi$. Therefore, (\ref{eq3.30}) and the Hewitt-Savage zero-one law implies $P(E_{\pi})=1$. Moreover, since the processes $Y_1,\cdots,Y_n$ are i.i.d. it follows that $P(E_{\pi})=1$ for all permutations $\pi$ of $\{1,\cdots,n\}$.

Returning to (\ref{eq3.28}) we take any permutation $\pi=(j_1,\cdots,j_n)$ with $i_1=j_1, \cdots, i_r=j_r$. Then for each $\omega$ and $k$ such that (\ref{eq3.29}) holds we have a rational number $q(\omega,k)$ such that
$$
Y_{j_1}(\frac{1}{k},\omega)< \cdots < Y_{j_r}(\frac{1}{k},\omega)<q(\omega, k) < Y_{j_{r+1}}(\frac{1}{k},\omega)< \cdots < Y_{j_n}(\frac{1}{k},\omega),
$$
and hence 
$$
\{Y_{i_1}(\cdot,\omega), \cdots ,Y_{i_r}(\cdot,\omega)\} = C_{\frac{1}{k}, q(k,\omega)} \cap\{Y_1(\cdot,\omega),\cdots, Y_n(\cdot,\omega)\}.
$$
Since $P(E_{\pi})=1$, we therefore have (\ref{eq3.28}), which completes the proof.
\end{proof}

\section{Applications to empirical quantile process CLTs}\label{quantileclts}

The typical empirical quantile CLT of this section starts with a base process $\{Y_t: t \in E\}$, and as before we define $X_t = Y_t + Z, t \in E$, where $Z$ is independent of $\{Y_t: t \in E\}$ and $Z$ has density $g(\cdot)$ on $ \mathbb{R}$. For the empirical process CLT's over $\mathcal{C}$ established in the previous section, we assumed $g(\cdot)$ was uniformly bounded on $\mathbb{R}$, or in $L_a(\mathbb{R})$ for some $a>1$. In order to prove our empirical quantile results, we assume a bit more about $g(\cdot)$, but these assumptions are not unusual, even for real-valued quantile CLT's. Moreover, keeping in mind  possible application to a diverse collection of base processes, we have chosen to put the assumptions we require on $g(\cdot)$, but the reader should keep in mind that if the distributions of $Y_t, t \in E,$ have densities with similar properties, then we could assume less about $g(\cdot)$. This is easily seen from the proofs, and basic facts about convolutions, 
 and hence are left for the reader to implement should the occasion arise.

Throughout we assume enough that the input process $\{X_t:t \in E\}$ satisfies the empirical CLT over $\mathcal{C}$ with centered Radon Gaussian limit on $\ell_{\infty}(E \times \mathbb{R})$ given by $\{G(t,x): t \in E, x \in \mathbb{R}\},$ where $G(\cdot,\cdot)$ is sample bounded on $E \times \mathbb{R}$, and uniformly continuous with respect to its $L_2$-distance there. Of course, as before a typical point $(t,x) \in E\times \mathbb{R}$ has been identified with $C_{t,x}$. Our empirical quantile CLT's in this setting will then be of two types, and in these results $I$ will always be a closed subinterval of $(0,1)$. The first is that the quantile processes
\begin{equation}\label{quantile-1}
\{\sqrt n ( \tau_{\alpha}^n(t)- \tau_{\alpha}(t)) f(t, \tau_{\alpha}(t))\colon\  n \geq 1\} 
\end{equation}
satisfy the CLT in $\ell_{\infty}(E \times I)$ with Gaussian limit process 
\begin{equation}\label{quantile-2}
\{G(t,\tau_{\alpha}(t))\colon\  (t,\alpha) \in E \times I\},
\end{equation}
and the second asserts that the quantile processes
\begin{equation}\label{quantile-3}
\{\sqrt n (\tau_{\alpha}^n(t)- \tau_{\alpha}(t)) \colon\  n \geq 1\}
\end{equation}
satisfy the CLT in $\ell_{\infty}(E \times I)$ with Gaussian limit process 
\begin{equation}\label{quantile-4}
\left\{\frac{G(t,\tau_{\alpha}(t))}{f(t, \tau_{\alpha}(t))}\colon\  (t,\alpha) \in E \times I \right\}. 
\end{equation}

\begin{thm}\label{quantilethm}
Assume that one of (i--iii) hold:

(i) $\{Y_t: t \in E \}$ is a centered sample continuous Gaussian process on a compact subset $E$ of $[0,T]^d$ satisfying (\ref{euclidean-eq}).

(ii) $E=[0,T]$  and $\{Y(t): t \geq 0 \}$ is a stochastic process with cadlag sample paths on $[0,\infty)$ such that $P(Y(0)=0)=1$. In addition, 
 $\{Y(t)\colon\   t \in E \}$ is a martingale whose $L_1$-increments are Lip-$\beta$ for some $\beta \in (0,1]$ , or a stationary independent increments process satisfying (\ref{eq3.12}) for some $p \in (0,1)$ and $\beta \in (0, 1]$. 
 
 (iii) $E=[0,T]$ and $\{Y_t: t \in E \}$ is a compound Poisson process built from the i.i.d random variables $\{Y_k: k \geq 1\}$ having no mass at zero and Poisson process $\{N(t): t \ge 0\}$ with parameter $\lambda \in (0,\infty)$ as in Proposition 3.2.

\noindent In addition, assume $X_t = Y_t + Z$, where $Z$ is independent of  $\{Y_t\colon\  t \in E\}$, and $Z$ has a strictly positive, uniformly bounded, uniformly continuous density function $g$ on $\mathbb{R}$. If $\{Y_t: t \in E\}$ satisfies (i), (ii), or (iii), $I$ is a closed subinterval of $(0,1)$, and we also assume that
\begin{equation}\label{Y-prob-bound}
\lim_{b \rightarrow \infty}\sup_{t \in E} P(|Y_t| \ge b)=0,
\end{equation}
then the quantile processes of (\ref{quantile-1}) and (\ref{quantile-3}) built from the input process $\{X_t: t \in E\}$ satisfy the empirical quantile CLT with corresponding Gaussian limit as in (\ref{quantile-2}) and(\ref{quantile-4}).
\end{thm}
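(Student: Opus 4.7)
The plan is to reduce to Corollary~\ref{clt unhat}, so the work breaks into (a) verifying the empirical CLT over $\mathcal{C}$ for the input process $\{X_t: t \in E\}$, and (b) verifying that each distribution function $F(t, \cdot)$ is strictly increasing and continuous with density satisfying (\ref{unif-cont-densities}) and (\ref{inf-eq}). For (a), the three cases are each handled directly by a result of Section~\ref{applications}: case (i) by Proposition~\ref{prop3.1}, case (ii) by Proposition~\ref{prop3.3}, and case (iii) by Proposition~\ref{prop3.2} combined with Remark~\ref{rem3.1}. In all three cases uniform boundedness of $g$ (which is weaker than what is assumed here) is enough; the extra hypotheses on $g$ (strict positivity, uniform continuity) are needed only for part (b).

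For (b), I would start from the convolution formula $f(t, x) = \int_{\mathbb{R}} g(x - y)\,dF_{Y_t}(y)$. Since $g > 0$ everywhere, $f(t, \cdot) > 0$, which makes $F(t, \cdot)$ strictly increasing and continuous on $\mathbb{R}$. The bound $|f(t, u) - f(t, v)| \le \sup_{z \in \mathbb{R}} |g(u - v + z) - g(z)|$ is independent of $t$ and vanishes as $|u - v| \to 0$ by the uniform continuity of $g$, giving (\ref{unif-cont-densities}).

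The main obstacle will be (\ref{inf-eq}), and this is where the tightness assumption (\ref{Y-prob-bound}) enters. Fix $I = [\alpha_0, \alpha_1] \subset (0, 1)$. First I would argue that the family of quantiles is uniformly bounded: using $P(X_t \le M) \ge P(|Y_t| \le b)\,P(Z \le M - b)$ and its lower-tail analogue, together with (\ref{Y-prob-bound}) to pin the $Y$-mass in a fixed compact set, one obtains $K_I < \infty$ with $\sup_{t \in E, \alpha \in I} |\tau_\alpha(t)| \le K_I$. Then for any fixed $\theta > 0$, choosing $b'$ via (\ref{Y-prob-bound}) so that $\inf_{t \in E} P(|Y_t| \le b') \ge 1/2$, the convolution formula gives
$$
f(t, x) \;\ge\; \tfrac{1}{2}\,\inf\bigl\{g(z) : z \in [-K_I - \theta - b',\, K_I + \theta + b']\bigr\}
$$
for every $t \in E$ and every $x$ with $|x - \tau_\alpha(t)| \le \theta$. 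The infimum on the right is strictly positive since $g$ is continuous and everywhere positive on a compact interval, yielding (\ref{inf-eq}) with the required constant $c_{I, \theta}$. With (a) and (b) in place, Corollary~\ref{clt unhat} immediately delivers both empirical quantile CLTs, with the stated Gaussian limits in (\ref{quantile-2}) and (\ref{quantile-4}).
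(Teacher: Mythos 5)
Your proposal is correct and follows essentially the same route as the paper: reduce to Corollary~\ref{clt unhat} via Propositions~\ref{prop3.1}--\ref{prop3.3} and Remark~\ref{rem3.1} for the empirical CLT over $\mathcal{C}$, then verify (\ref{unif-cont-densities}) from the uniform continuity of $g$ and (\ref{inf-eq}) by combining uniform boundedness of the quantiles $\tau_\alpha(t)$ with a convolution lower bound on $f(t,\cdot)$ over compacts, both obtained from (\ref{Y-prob-bound}). The details match the paper's argument step for step.
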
 

\begin{rem}\label{rem3.2}
It is easy to see at this point that the results of section three allow us to apply Theorem 4.1 to obtain empirical quantile results of both types for fractional Brownian motions, the Brownian sheet, strictly stable stationary independent increment processes, martingales, and compound Poisson processes. The precise corollaries are easy to formulate, and hence are not included.
\end{rem}

\begin{proof}
If the base process $\{Y_t: t \in E\}$ satisfies (i), (ii), or (iii), and $g$ is uniformly bounded on $\mathbb{R}$, then Propositions 3.1-3.3 and Remark 3 following Proposition 3.2 imply that the resulting input process $\{X_t:t \in E\}$ satisfies the empirical CLT over $\mathcal{C}$ with centered Gaussian limit given by $\{G(t,x): t \in E, x \in \mathbb{R}\},$ where $G(\cdot,\cdot)$ is sample bounded on $E \times \mathbb{R}$, uniformly continuous with respect to its $L_2$-distance there, and has Radon support in $\ell_{\infty}(E \times \mathbb{R})$. Furthermore, if $H_t(x), t \in E,$ is the distribution function of $Y_t,$ then $X_t$ has probability density function
$$
f(t,x)= \int_{\mathbb{R}}g(x-v)dH_t(v), t \in E.
$$
Hence if $g$, the density of $Z$, is strictly positive, uniformly bounded, and uniformly continuous on $\mathbb{R}$, then it is easy to check that each of the densities $f(t,\cdot), t \in E,$ have the same properties. In particular,
we have $ \lim_{\delta\to 0}\sup_{t \in E}$ $\sup_{|u-v|\le\delta}|f(t,u) -f(t,v)| =0$, which is (\ref{unif-cont-densities}).
Hence Corollary 2.1 immediately implies the empirical quantile processes satisfy the quantile CLT's with Gaussian limit as indicated in (\ref{quantile-2}) and (\ref{quantile-4})
provided we show (\ref{inf-eq}) holds. That is, it remains to verify that the densities $f(t,\cdot), t \in E,$ of the input process $\{X_t: t \in E\}$ satisfy 
\begin{equation}\label{inf-alpha-I}
\inf_{t \in E,\alpha \in I} f(t,\tau_{\alpha}(t)) =c_I>0
\end{equation}
for every closed interval $I$ in $(0,1)$. 

Now (\ref{inf-alpha-I}) holds if we show that for any closed subinterval $I$ of $(0,1)$ and all $a>0$ that
\begin{equation}\label{inf-alpha-I-2}
\inf_{t \in E, |x| \leq a}f(t,x)=c_a>0 \text{ and } \sup_{t \in E, \alpha \in I} |\tau_{\alpha}(t)| < \infty. 
\end{equation}

First we show the left expression in (\ref{inf-alpha-I-2}) holds, so take $a>0$.
Then, for every $b>0$
\begin{align*}
\inf_{t \in E, |x| \leq a} f(t,x) &= \inf_{t \in E, |x| \leq a} \int_{\mathbb{R}} g(x-v)dH_t(v)\\
& \geq \inf_{t \in E} \int_{\mathbb{R}} \inf_{|x| \leq a}g(x-v)dH_t(v)\\
&\geq \inf_{|u| \leq a + b}g(u)  \inf_{t \in E}\int_{-b}^{b}dH_t(v),
\end{align*}
and, since  $g$ satisfies (\ref{Y-prob-bound}), there exists $b_0>0$ sufficiently large that
$$
 \inf_{t \in E} \int_{-b_0}^{b_0} dH_t(v) \geq  \frac{1}{2}.
$$
Therefore,  we have 
$$
\inf_{t \in E, |x| \leq a} f(t,x) \geq \frac{1}{2} \inf_{|u| \leq a + b_0}g(u) \equiv c_a>0.
$$

Now we turn to the second term in (\ref{inf-alpha-I-2}). Since $I$ is a closed interval of $(0,1)$ there is a $\theta \in (0, \frac{1}{2})$ such that $I \subset (\theta, 1-\theta)$ and 
\begin{align*}
\sup_{t \in [0,T]} P(|Y(t) +Z| \geq a) &\leq \sup_{t \in [0,T]} P\left(|Y(t)| \geq \frac{a}{2}\right) +P\left(|Z| \geq \frac{a}{2}\right)\\
&  \leq \frac{\theta}{2},
\end{align*}
where the second inequality follows from (\ref{Y-prob-bound}) by taking $a>0$ sufficiently large. Hence for each $t \in [0,T], \alpha \in I$ we have
$\tau_{\alpha}(t) \in [-a,a]$ and the right term of (\ref{inf-alpha-I-2}) holds. Thus (\ref{inf-alpha-I}) holds, and the theorem is proven.
\end{proof}

\section{Additional quantile process CLTs with stable and Gaussian inputs}\label{sec5}

Let $E=[0,T]$, and assume $\{X(t): t \geq 0 \}$ is a symmetric stable process of index $r \in (0,2]$ with stationary independent increments, cadlaq sample paths on $[0,\infty)$, and such that  $P(X(0)=0)=1$.   Then, except for degenerate cases, Corollary \ref{cor3.3} implies the empirical process built from i.i.d. copies of  $\{X(t)\colon\   t \in E \}$ fails the CLT over $\mathcal{C}$, but it holds
for i.i.d. copies of  $\{X(t)\colon\  a \le t \le T \}$ over $\mathcal{C}_{[a,T]} \equiv  \{C_{t,x}: a \le t \le T, x \in \mathbb{R}\}$, provided $0<a<T$. Moreover, with $E=[a,T]$ and $I$ a closed subinterval of $(0,1)$, Theorem \ref{quantilethm} then implies that the empirical quantile processes given in (\ref{quantile-1}) and (\ref{quantile-3}), satisfy the CLT with limiting Gaussian processes as in (\ref{quantile-2}) and (\ref{quantile-4}), respectively.

As can be seen from the proofs,  the difference in the results for $X$ indexed by $[0,T]$ versus $[a,T]$ seems in large part due to the fact that the densities $f(t,\cdot)$ are not uniformly bounded on $\mathbb{R}$ as $ t \downarrow 0$, and that $X(0)$ is degenerate at zero when $t=0$.  However, this is not the complete story, since in this section we will prove 
that the empirical quantile processes of (\ref{quantile-3}) with input process $X$ on $E=[0,T]$ satisfy the CLT of (\ref{quantile-4}) provided $I$ is a closed subinterval of $(0,1)$.
As mentioned in the introduction, this extends the result of J. Swanson in (\cite{swanson-scaled-median}) when $I=\{\frac{1}{2}\}$ and in (\cite{swanson-fluctuations})
for other fixed $\alpha \in (0,1)$. 

To prove our result we need a number of lemmas. The first shows that if the input process is scalable, then certain information on the empirical quantile process on an interval, say, $[1,2]$, yields information on an interval $[0,\delta]$. We phrase this in slightly more general terms, but ultimately it will be applied to quantile processes. 

Let $\{W(t,\alpha):t\geq 0, \alpha\in (0,1)\}$ be a stochastic processes which is scalable in $t$, i.e. for some constant $p \in (0,\infty)$ the process 
\[\{W(c t,\alpha):t\geq 0, \alpha\in (0,1)\} \text{ \rm and } \{c^pW(t,\alpha):t\geq 0, \alpha\in (0,1)\}
\] 
have the same law for all $n\geq 1$. Also assume $P(W(0,\alpha)=0)=1$ for all $\alpha \in (0,1)$.
Let $\mathbb{Q}$ denote the rational numbers, $J=[1,2]$, $A=[1-\alpha^{*},\alpha^{*}], \frac{1}{2} <\alpha^{*} <1$, and for a subset $B$ of $\mathbb{R}$ we define $B_{\mathbb{Q}}= B \cap \mathbb{Q}$. 

\begin{lem}\label{simplified homogeneous} Let $W$ be p-scalable. Fix $0<\delta\in \mathbb{Q}$. For $q>0$ 
\begin{equation}
\E(\sup_{ u \in (0,\delta]_{\mathbb{Q}}, \alpha\in A_{\mathbb{Q}}}|W(u, \alpha)|^q])\le \dfrac{\delta^{pq}}{1-2^{-pq}}\E(\sup_{ u \in J_{\mathbb{Q}}, \alpha\in A_{\mathbb{Q}}}|W(u, \alpha)|^q]).
\end{equation}
\end{lem}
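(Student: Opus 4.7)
The plan is a standard dyadic decomposition of $(0,\delta]$, followed by an application of $p$-scalability on each sub-interval and summation of a geometric series. Since $\delta\in\mathbb{Q}$, every endpoint $\delta\,2^{-k}$ with $k\ge 0$ lies in $\mathbb{Q}$, and we have the disjoint decomposition
\[
(0,\delta]_{\mathbb{Q}} \;=\; \bigcup_{k=0}^{\infty} \bigl(\delta\,2^{-k-1},\,\delta\,2^{-k}\bigr]_{\mathbb{Q}}.
\]
Because $|W(u,\alpha)|^{q}\ge 0$, the supremum over $(0,\delta]_{\mathbb{Q}}\times A_{\mathbb{Q}}$ is bounded above by the countable sum of the sups over the dyadic pieces, and monotone convergence gives
\[
\E\Bigl[\sup_{(0,\delta]_{\mathbb{Q}}\times A_{\mathbb{Q}}} |W|^{q}\Bigr] \;\le\; \sum_{k=0}^{\infty} \E\Bigl[\sup_{(\delta\,2^{-k-1},\delta\,2^{-k}]_{\mathbb{Q}}\times A_{\mathbb{Q}}} |W|^{q}\Bigr].
\]

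Next, for each $k\ge 0$ set $c_k := \delta\,2^{-k-1}\in\mathbb{Q}$. Multiplication by $c_k$ is a bijection of $(1,2]_{\mathbb{Q}}$ onto $(\delta\,2^{-k-1},\delta\,2^{-k}]_{\mathbb{Q}}$, precisely because $c_k$ is rational. Applying the $p$-scalability hypothesis with constant $c=c_k$ yields the distributional equality of processes
\[
\bigl\{W(c_k v,\alpha):v\in(1,2]_{\mathbb{Q}},\,\alpha\in A_{\mathbb{Q}}\bigr\} \;\stackrel{d}{=}\; \bigl\{c_k^{\,p}\,W(v,\alpha):v\in(1,2]_{\mathbb{Q}},\,\alpha\in A_{\mathbb{Q}}\bigr\},
\]
and since the countable supremum of $|\cdot|^{q}$ is a measurable functional on this index set, taking $\E$ gives
\[
\E\Bigl[\sup_{(\delta\,2^{-k-1},\delta\,2^{-k}]_{\mathbb{Q}}\times A_{\mathbb{Q}}} |W|^{q}\Bigr] \;=\; c_k^{\,pq}\,\E\Bigl[\sup_{(1,2]_{\mathbb{Q}}\times A_{\mathbb{Q}}} |W|^{q}\Bigr].
\]
Since $(1,2]\subseteq J=[1,2]$, the right-hand expectation is dominated by $\E[\sup_{J_{\mathbb{Q}}\times A_{\mathbb{Q}}} |W|^{q}]$.

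Summing the geometric series,
\[
\sum_{k=0}^{\infty} c_k^{\,pq} \;=\; \delta^{pq}\sum_{k=0}^{\infty} 2^{-pq(k+1)} \;=\; \frac{\delta^{pq}\,2^{-pq}}{1-2^{-pq}} \;\le\; \frac{\delta^{pq}}{1-2^{-pq}},
\]
and combining the three displays above completes the proof. The argument is essentially mechanical once $p$-scalability is in hand; the only point requiring any attention is that the rationality of $\delta$ keeps every scaling constant $c_k$ in $\mathbb{Q}$, so that scalability transfers the distributional identity between the two countable rational index sets and the suprema remain measurable without any recourse to outer integrals.
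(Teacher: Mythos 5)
Your proof is correct and follows essentially the same route as the paper's: a dyadic decomposition of $(0,\delta]$ into the intervals $(\delta 2^{-k-1},\delta 2^{-k}]$, bounding the supremum by the sum of suprema, applying $p$-scalability with constant $\delta 2^{-k-1}$ to reduce each piece to $(1,2]_{\mathbb{Q}}$, and summing the geometric series. Your added observations (that rationality of $\delta$ keeps the scaling constants rational so the countable index sets correspond, and that the series actually sums to $\delta^{pq}2^{-pq}/(1-2^{-pq})\le \delta^{pq}/(1-2^{-pq})$) are minor refinements of details the paper passes over silently.
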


\begin{proof}
\begin{align*}
\E\sup_{ s \in (0,\delta]_{\mathbb{Q}}, \alpha\in A_{\mathbb{Q}}}|W(s, \alpha)|^q]&=\E \sup_{j\ge 1}\sup_{s\in (2^{-j}\delta, 2^{-(j-1)}\delta]_{\mathbb{Q}}, \alpha\in A_{\mathbb{Q}}}|W(s,\alpha)|^{q}\\
&\le\sum_{j=1}^{\infty}\ \E \sup_{s\in (2^{-j}\delta, 2^{-(j-1)}\delta]_{\mathbb{Q}}, \alpha\in A_{\mathbb{Q}}}|W(s,\alpha)|^{q}\\
&=\sum_{j=1}^{\infty}\ \E \sup_{s\in (1,2]_{\mathbb{Q}}, \alpha\in A_{\mathbb{Q}}}|W(2^{-j}\delta s,\alpha)|^{q}\\
&=\sum_{j=1}^{\infty} (2^{-j}\delta)^{pq}\ \E\sup_{s\in (1,2]_{\mathbb{Q}}, \alpha\in A_{\mathbb{Q}}}|W(s,\alpha)|^{q}\\
&=\frac{\delta^{pq}}{1-2^{-pq}}\E(\sup_{ u \in J_{\mathbb{Q}}, \alpha\in A_{\mathbb{Q}}}|W(u, \alpha)|^q]).\qquad \qed
\end{align*}
\renewcommand{\qed}{}\end{proof}

\begin{rem}Shortly we will apply this to the sequence of empirical quantile processes. That is, we apply this lemma to each of  $W_{n}(t,\alpha):=\sqrt{n}\bigl(F^{-1}_{n,t}(\alpha)-F^{-1}_{t}(\alpha)\bigr)$, where $t\ge 0, \alpha\in(0,1)$. Since the bounds obtained in  Lemma \ref{simplified homogeneous} depend only on the scalability constant, $c$, all of our estimates will be uniform in $n$. 
\end{rem}

We now prove that when the input process is a stationary, independent increment process with symmetric $p$-stable distribution ($0<p<2$), the empirical quantile processes uniformly satisfy the hypothesis of Lemma \ref{simplified homogeneous}. For this purpose the next  lemma is useful.

\begin{lem}\label{quantile comparison} Let $X$ be an arbitrary random variable. If $q_{\alpha}(X)$ denotes any $\alpha$-quantile for $X$, then $-q_{1-\alpha}(-X)$ is also an $\alpha$-quantile for $X$. 
\end{lem}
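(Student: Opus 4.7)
The plan is to argue directly from the two-sided definition of a quantile. Recall that a real number $q$ is an $\alpha$-quantile for a random variable $X$ precisely when
\begin{equation*}
P(X\le q)\ge \alpha \quad\text{and}\quad P(X\ge q)\ge 1-\alpha.
\end{equation*}
This symmetric formulation is what makes the statement almost tautological, so the first step of the proof is simply to recall (or verify) the equivalence of this definition with the one via the left-continuous inverse used in \eqref{quantile}. For a general distribution this is standard, but it is worth stating explicitly, because the asymmetric ``infimum'' form of the definition is slightly awkward for the argument that follows.

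Next, I would set $q := q_{1-\alpha}(-X)$, so that by the definition applied to $-X$ at level $1-\alpha$,
\begin{equation*}
P(-X\le q)\ge 1-\alpha \quad\text{and}\quad P(-X\ge q)\ge 1-(1-\alpha)=\alpha.
\end{equation*}
Rewriting these inequalities in terms of $X$ gives
\begin{equation*}
P(X\ge -q)\ge 1-\alpha \quad\text{and}\quad P(X\le -q)\ge \alpha,
\end{equation*}
which is exactly the statement that $-q = -q_{1-\alpha}(-X)$ is an $\alpha$-quantile for $X$.

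There is essentially no obstacle here; the only small subtlety is that ``$\alpha$-quantile'' is generally not unique, so one must be careful that the definition used is the symmetric two-sided one above, rather than literally the left-continuous inverse $\inf\{x\colon P(X\le x)\ge\alpha\}$. Under the latter, the claim should be read as: $-q_{1-\alpha}(-X)$ lies in the (possibly degenerate) interval of $\alpha$-quantiles of $X$, rather than necessarily equaling the specific inverse $F_X^{-1}(\alpha)$. Once this point is flagged, the argument above is complete in two short lines.
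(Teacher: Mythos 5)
Your proof is correct and follows essentially the same route as the paper's: both apply the two-sided characterization $P(X\le q)\ge\alpha$, $P(X\ge q)\ge 1-\alpha$ to $-X$ at level $1-\alpha$ and then flip the inequalities. Your added caveat about the two-sided definition versus the left-continuous inverse is a sensible clarification, since the lemma is indeed stated for ``any $\alpha$-quantile'' rather than the specific inverse in \eqref{quantile}.
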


\begin{proof} 
\begin{align*}
&P(X\ge -q_{1-\alpha}(-X))=P(-X\le q_{1-\alpha}(-X))\ge 1-\alpha\\
\intertext{and}
&P(X\le -q_{1-\alpha}(-X))=P(-X\ge q_{1-\alpha}(-X))\ge \alpha. \qquad \qed
\end{align*}
\renewcommand{\qed}{}\end{proof}

\begin{thm}\label{tail bounds}
Let $\{X(t)\colon t\ge 0\}$ be a symmetric $r$-stable process with stationary, independent increments, and such that $P(X(0)=0)=1$. Then, the centered empirical quantile process built from i.i.d copies of $\{X(t): t \ge 0\}$ satisfies the hypothesis of Lemma \ref{simplified homogeneous}, i.e., there exists a positive integer $n_0$ such that
$$
\sup_{n \ge n_0}\E[\sup_{t\in J_{\mathbb{Q}}, \alpha\in A_{\mathbb{Q}}} \sqrt n |F_{n,t}^{-1}(\alpha)-F_{t}^{-1}(\alpha)|] <\infty,
$$
and hence for every  $\epsilon>0$ there exists $\delta>0$ such that
$$
\sup_{n \geq n_0} P(\sup_{t \in [0, \delta]_ Q, \alpha \in A_{\mathbb{Q}}} \sqrt n |F_{n,t}^{-1}(\alpha)-F_{t}^{-1}(\alpha)|> \epsilon) \leq \epsilon.
$$
\end{thm}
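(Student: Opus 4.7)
The statement has two parts: the uniform $L^1$-bound on $J_{\mathbb{Q}}\times A_{\mathbb{Q}}$ and the probability bound on $[0,\delta]_{\mathbb{Q}}\times A_{\mathbb{Q}}$. The second is an immediate consequence of the first via Lemma \ref{simplified homogeneous} (with $q=1$) and Markov's inequality, so the task reduces to establishing the $L^1$-bound on $J=[1,2]$.

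The first step is the decomposition $X(t)=X(1)+(X(t)-X(1))$ for $t\in J$, which places $X$ on $J$ into the framework of Proposition \ref{prop3.3}: $Z:=X(1)$ is a nondegenerate symmetric $r$-stable random variable with strictly positive, uniformly bounded, uniformly continuous density $g$, while $Y(t):=X(t)-X(1)$ is a cadlag process with stationary independent increments, $Y(1)=0$, and Lip-increments as in (\ref{eq3.12}) (verified just after that display). Proposition \ref{prop3.3} then gives the empirical CLT over $\mathcal{C}_J$. The density $f(t,\cdot)$ of $X(t)$ is the scaled symmetric stable density $t^{-1/r}g_r(\cdot/t^{1/r})$, which on $J$ is uniformly bounded, uniformly continuous, and bounded below on every compact $x$-set; since $\tau_{\alpha}(t)=t^{1/r}\tau_{\alpha}(1)$ the quantile set $\{\tau_{\alpha}(t):t\in J,\alpha\in A\}$ is bounded, so the density hypotheses (\ref{unif-cont-densities}) and (\ref{inf-eq}) hold on $J\times A$.

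To convert the empirical process bound into a quantile bound I would use a Bahadur-type inversion. Since $|F_{n,t}(F_{n,t}^{-1}(\alpha))-\alpha|\le 1/n$ by definition, $|F_t(F_{n,t}^{-1}(\alpha))-\alpha|\le\|F_{n,t}-F_t\|_{\infty}+1/n$. Fix $M$ with $\sup_{t\in J,\alpha\in A}|\tau_{\alpha}(t)|<M$ and $c:=\inf_{t\in J,|x|\le M}f(t,x)>0$. On $E_{n,M}:=\{\sup_{t\in J_{\mathbb{Q}},\alpha\in A_{\mathbb{Q}}}|F_{n,t}^{-1}(\alpha)|\le M\}$ the mean value theorem yields
\[
\sqrt n\,|F_{n,t}^{-1}(\alpha)-\tau_{\alpha}(t)|\le c^{-1}\bigl(\sqrt n\|F_{n,t}-F_t\|_{\infty}+1/\sqrt n\bigr).
\]
On $E_{n,M}^c$, the deterministic bound $\sup_{t,\alpha}|F_{n,t}^{-1}(\alpha)|\le\max_{i\le n}\sup_{s\in J}|X_i(s)|$, combined with Hoeffding's inequality for the binomials $|\{i\le n:X_i(t)>M\}|$ (union-bounded over the countable set $J_{\mathbb{Q}}\times A_{\mathbb{Q}}$ against the polynomial growth of $\max_i\sup_s|X_i(s)|$), shows that the exceptional contribution is $o(1)$ provided $n\ge n_0$ is chosen large enough that the relevant order statistics have finite first moment. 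Finally, $W_n(t,\alpha):=\sqrt n(F_{n,t}^{-1}(\alpha)-F_t^{-1}(\alpha))$ inherits scalability with exponent $p=1/r$ from $X(ct)\stackrel{d}{=}c^{1/r}X(t)$, so Lemma \ref{simplified homogeneous} with $q=1$ followed by Markov's inequality gives the probability bound on $[0,\delta]_{\mathbb{Q}}\times A_{\mathbb{Q}}$ once $\delta$ is chosen small enough.

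The main obstacle is the uniform-in-$n$ $L^1$-bound
\[
\sup_{n\ge n_0}\E\,\sqrt n\sup_{t\in J,\,x\in\R}|F_{n,t}(x)-F_t(x)|<\infty,
\]
which goes beyond the asymptotic tightness supplied by Proposition \ref{prop3.3}. The $L$-condition verified for the input $X$ in that proof is, however, a bracketing-type modulus-of-continuity estimate, and chaining against it via standard empirical-process maximal inequalities (cf.\ Chapter 2.14 of \cite{vw}) converts it into the required uniform $L^1$-bound on $\sqrt n\|F_{n,\cdot}-F_\cdot\|_{\ell_{\infty}(J\times\R)}$. Establishing this moment strengthening of the CLT for the stable input is the crux of the argument; once it is in hand, the mean-value-theorem bookkeeping above closes everything.
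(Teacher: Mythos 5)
Your architecture matches the paper's: reduce everything to a uniform-in-$n$ moment bound on $J=[1,2]$, then invoke Lemma \ref{simplified homogeneous} plus Markov/Chebyshev. Your main regime (density lower bound near the quantiles, converting the quantile deviation into an empirical-process deviation) is also in the same spirit as the paper's step (i), which uses the unimodality of the symmetric stable density \cite{yamazato-unimodal} to get $P(F_t^{-1}(\alpha)<X(t)\le u/\sqrt n+F_t^{-1}(\alpha))\ge Du/\sqrt n$ for $u/\sqrt n\le C$. However, you leave genuine gaps at exactly the two points where the real work lies. First, the uniform moment bound $\sup_n\E\,n^{-1/2}\|\sum_{j\le n}(I_{X_j(t)>y}-P(X(t)>y))\|^q<\infty$, which you flag as ``the crux'' and propose to obtain by chaining against the $L$-condition via Chapter 2.14 of \cite{vw}: that route is not set up correctly (the $L$-condition is not a bracketing or uniform-entropy hypothesis in the form those maximal inequalities require), and in any case it is unnecessary. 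The paper gets this in one line from Hoffmann--J{\o}rgensen's inequality \cite{hoff}: the summands are bounded by $1$ and the already-established CLT over $\mathcal{C}$ restricted to $J$ gives stochastic boundedness of the norm, which together yield uniformly bounded moments of every order. You should replace your chaining sketch with this observation; as written, the centerpiece of your argument is an unproved assertion.

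Second, your treatment of the exceptional regime (where the empirical quantile escapes a fixed compact set) is hand-waved at the point where uniformity in $n$ must be extracted. Bounding $\sup_{t,\alpha}|F_{n,t}^{-1}(\alpha)|$ by $\max_{i\le n}\sup_{s\in J}|X_i(s)|$ is too crude: for an $r$-stable process this maximum grows like $n^{1/r}$ and has no first moment when $r\le 1$, so ``the relevant order statistics have finite first moment'' needs to be made quantitative before it controls $\E[\sqrt n\sup(\cdots)]$ uniformly in $n$. The paper's step (ii) does this precisely: the event $\{F_{n,t}^{-1}(\alpha)>u/\sqrt n+F_t^{-1}(\alpha)\}$ forces at least $\lfloor n(1-\alpha^{*})\rfloor$ of the independent paths to exceed $u/(2\sqrt n)$ in sup-norm over $J$, so by the binomial bound and the stable tail estimate $P(\|X\|_J>v)\le c_r v^{-r}$ (Proposition 5.6 of \cite{led-tal-book}) the tail is at most $[\lambda_r\sqrt n/u]^{r\lfloor n(1-\alpha^{*})\rfloor}$, which for $u\ge C\sqrt n$ is $\le u^{-2}$ once $n\ge n_0$; this is what makes the tail integrable in $u$ uniformly in $n$ and is also where the explicit $n_0$ comes from. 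Your Hoeffding-based sketch could in principle be pushed to a similar conclusion, but you have not carried out the integration over $u$ that the expectation requires, and without it the claimed uniform $L^1$-bound does not follow.
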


\begin{rem}
From (\ref{final minmax-1}), the $n_0$ of the theorem can be taken to be 
$$
n_0= \inf \{n \ge 1: 2^{-(r \lfloor n(1-\alpha^{*})\rfloor -2)}(\lambda_r \sqrt n)^2 \le 1\},
$$ 
where $\lambda_{r}^{r}:=\dfrac{2^{r}e c_{r}}{1-\alpha^{*}}$, and $c_r$ depends on the tail behavior of $||X||_J$ as given prior to (\ref{final minmax}). In addition, due to the scaling of our input process, $F_t^{-1}(\alpha) = t^{\frac{1}{r}}F_1^{-1}(\alpha)$, and since $\alpha \in A$ implies $|F_1^{-1}(\alpha)| \le F_1^{-1}(\alpha^{*})< \infty$, we have for $0<q<r$ that
$$
\sup_{1 \le n < n_0} \E[\sup_{t\in J_{\mathbb{Q}}, \alpha\in A_{\mathbb{Q}}} \sqrt n |F_{n,t}^{-1}(\alpha)-F_{t}^{-1}(\alpha)|^q] <\infty,
$$
i.e.,  $\max_{1 \le n \le n_0} \sup_{t \in J_{\mathbb{Q}},\alpha \in A_{\mathbb{Q}}}|F_{n,t}^{-1}(\alpha)| \le \max_{1 \le j \le n_0} \sup_{0 \le t \le T} |X_j(t)|<\infty,$
and for $0< q <r$ the right term has a $q^{th}$ moment. Thus
$$
\sup_{n \ge 1} \E[\sup_{t\in J_{\mathbb{Q}}, \alpha\in A_{\mathbb{Q}}} \sqrt n |F_{n,t}^{-1}(\alpha)-F_{t}^{-1}(\alpha)|^q] <\infty
$$
for $0<q<r$, and the conclusion of Theorem \ref{tail bounds} also holds for all $n\ge 1$ via an application of Lemma \ref{simplified homogeneous} with $n_0=1$.
\end{rem}

\begin{proof} First we note that the scaling property of the iid symmetric stable processes, $\{X_{j}(t)\}$ immediately implies scalability with the same constant for all the processes 
\[\{F_{n,t}^{-1}(\alpha)-F_{t}^{-1}(\alpha)\colon t\in [0,\infty), \alpha\in (0,1)\}.
\] 

\n We'll obtain bounds on 
\begin{equation}P(\sqrt{n}\sup_{t\in J_{\mathbb{Q}},\alpha\in A_{\mathbb{Q}}}|F_{n,t}^{-1}(\alpha)-F_{t}^{-1}(\alpha)|>u).
\end{equation}
strong enough to yield an $n_{0}$ for which 
\[\sup_{n \ge n_0} \E[\sup_{t\in J_{\mathbb{Q}}, \alpha\in A_{\mathbb{Q}}} \sqrt n |F_{n,t}^{-1}(\alpha)-F_{t}^{-1}(\alpha)|] <\infty,
\]
At this point we can apply Lemma \ref{simplified homogeneous} to obtain the bound
\begin{align}
&\E(\sup_{ u \in (0,\delta]_{\mathbb{Q}}, \alpha\in A_{\mathbb{Q}}}|F_{n,t}^{-1}(\alpha)-F_{t}^{-1}(\alpha)|^q])\le \dfrac{\delta^{pq}}{1-2^{-pq}}\\
&\E(\sup_{ u \in J_{\mathbb{Q}}, \alpha\in A_{\mathbb{Q}}}|F_{n,t}^{-1}(\alpha)-F_{t}^{-1}(\alpha)|^q]).\notag
\end{align}
An application of Chebyschev's inequality will then yield the Theorem. 
\par

We break the proof into two parts. The first part covers the case when we have a lower bound on the densities of $X_{t}$ for $t\in J=[1,2]$. In the second part we take care of the remaining case.
Now for $u\ge 0$ we have
\begin{enumerate}
\item[(i)] 
\begin{align*}
&P(\sqrt{n}\sup_{t\in J_{\mathbb{Q}}, \alpha\in A_{\mathbb{Q}}}\bigl(F_{n,t}^{-1}(\alpha)-F_{t}^{-1}(\alpha)\bigr)>u)\\
=~ &P(\exists t \in J_{\mathbb{Q}}, \alpha \in A_{\mathbb{Q}}, \sum_{j=1}^{n}I_{X_{j}(t)>\dfrac{u}{\sqrt{n}}+F_{t}^{-1}(\alpha)}\ge n(1-\alpha))\\
=~ &P(\exists t \in J_{\mathbb{Q}},\alpha \in A_{\mathbb{Q}},  \sum_{j=1}^{n}\bigl(I_{X_{j}(t)>{u}/{\sqrt{n}}+F_{t}^{-1}(\alpha)}-P(X(t)\\
&\quad >{u}/{\sqrt{n}}+F_{t}^{-1}(\alpha))\bigr)
\ge n\bigl[(1-\alpha) - P(X(t)>\dfrac{u}{\sqrt{n}}+F_{t}^{-1}(\alpha))\bigr])\\
=~ &P(\exists t \in J_{\mathbb{Q}},\alpha \in A_{\mathbb{Q}}, \sum_{j=1}^{n}\bigl(I_{X_{j}(t) >{u}{\sqrt{n}}+F_{t}^{-1}(\alpha)}-P(X(t)\\
&\quad >\dfrac{u}{\sqrt{n}}+F_{t}^{-1}(\alpha))\bigr)
\ge nP(F_{t}^{-1}(\alpha)<X(t)\le \dfrac{u}{\sqrt{n}}+F_{t}^{-1}(\alpha))).
\end{align*}

Also, since the density, $f_{1}$, of $X(1)$, is symmetric about $0$ and unimodal (\cite{yamazato-unimodal}), it is decreasing away from the origin. Hence, using $0< t \le 2$, 
\begin{align*}P&(F_{t}^{-1}(\alpha)<X(t)\le \dfrac{u}{\sqrt{n}}+F_{t}^{-1}(\alpha))=\int_{t^{-1/r}F_{t}^{-1}(\alpha)}^{t^{-1/r}[F_{t}^{-1}(\alpha)+\frac{u}{\sqrt{n}}]}f_{1}(x)\, dx\\
&\ge \bigl(\inf_{F_{1}^{-1}(\alpha)\le x\le [F_{1}^{-1}(\alpha)+\frac{t^{{-1}/{r}}u}{\sqrt{n}}]}f_{1}(x)\bigr)\dfrac{u}{2^{\frac{1}{r}}\sqrt{n}}\\
&\ge f_{1}(F_{1}^{-1}(\alpha^{*})+\frac{u}{\sqrt{n}})\dfrac{u}{2^{1/r}\sqrt{n}}
\end{align*}
So, if  $0 \le \dfrac{u}{\sqrt{n}}\le C$, since the density, $f_{1}$, is decreasing away from the origin,we have the inequality
\begin{align*}
&P (\sqrt{n}\sup_{t\in J_{\mathbb{Q}}, \alpha\in A_{\mathbb{Q}}}\bigl(F_{n,t}^{-1}(\alpha)-F_{t}^{-1}(\alpha)\bigr)>u)\\
\le &P(\dfrac1{\sqrt{n}}\|\sum_{j=1}^{n}\bigl(I_{X_{j}(t)>y}-P(X(t)>y)\bigr)\|_{J_{\mathbb{Q}}\times \mathbb{R}}\\
\ge  &f_{1}(F_{1}^{-1}(\alpha^{*})+C)u/2^{1/r}).
\end{align*}
Thus, for $t \in J_{\mathbb{Q}}$ fixed, the continuity of $P(X(t) >y)$ in $y$ and the right continuity of $I_{X_j(t) >y}$ in $y$ for $1 \le j \le n$  implies
\begin{align*}
&\dfrac1{\sqrt{n}}\|\sum_{j=1}^{n}\bigl(I_{X_{j}(t)>y}-P(X(t)>y)\bigr)\|_{J_{\mathbb{Q}}\times \mathbb{R}}\\
=~ &\dfrac1{\sqrt{n}}\|\sum_{j=1}^{n}\bigl(I_{X_{j}(t)>y}-P(X(t)>y)\bigr)\|_{J_{\mathbb{Q}}\times \mathbb{Q}}
\end{align*}
with probability one.

Hence, if $D:= 2^{-1/r}f_{1}(F_{1}^{-1}(\alpha^{*})+C)$, we have for $0 \le \dfrac{u}{\sqrt{n}}\le C$, 
\begin{align}\label{emp upper bound}P&(\sqrt{n}\sup_{t\in J_{\mathbb{Q}}, \alpha\in A_{\mathbb{Q}}}\bigl(F_{n,t}^{-1}(\alpha)-F_{t}^{-1}(\alpha)\bigr)>u)\notag\\
&\le P(\dfrac1{\sqrt{n}}\|\sum_{j=1}^{n}\bigl(I_{X_{j}(t)>y}-P(X(t)>y)\bigr)\|_{J_{\mathbb{Q}}\times \mathbb{Q}}\ge  Du)
\end{align}
Since the summands, $I_{X_{j}(t)>y}-P(X(t)>y)$ are bounded by $1$, and the CLT  over $\mathcal{C}$ implies stochastic boundedness of the normalized norm in (\ref{emp upper bound}), we can use a result of  Hoffman-J\o rgensen, see pp. 164-5 of  \cite{hoff}, 
to obtain for any $q>0$, 
\[B_{q}:=\sup_{n}\E \dfrac1{\sqrt{n}}\|\sum_{j=1}^{n}\bigl(I_{X_{j}(t)>y}-P(X(t)>y)\bigr)\|_{J_{\mathbb{Q}}\times \mathbb{Q}}^{q}<\infty.
\]
Therefore, for $0 \le  u \le C/\sqrt n,$
\begin{align}\label{small values}P&(\sqrt{n}\sup_{t\in J_{\mathbb{Q}}, \alpha\in A_{\mathbb{Q}}}\bigl(F_{n,t}^{-1}(\alpha)-F_{t}^{-1}(\alpha)\bigr)>u)\le B_{q}\dfrac{1}{(Du)^{q}}.
\end{align}

\item[(ii)] Now we deal with the case $\frac{u}{\sqrt{n}}> C$. 
In the  computation below we don't use the particular form of the quantiles, $F_{n,t}^{-1}(\alpha), F_{t}^{-1}(\alpha)$, only the fact that they are quantiles. Note (\ref{other quantile}) below. 
Hence, by Lemma \ref{quantile comparison} 
\begin{align}
&P(\sqrt{n}\sup_{t\in J,\alpha\in A}|F_{n,t}^{-1}(\alpha)-F_{t}^{-1}(\alpha)|>u)\notag\\
\le~ &P(\sqrt{n}\sup_{t\in J_{\mathbb{Q}},\alpha\in A_{\mathbb{Q}}}\bigl(F_{n,t}^{-1}(\alpha)-F_{t}^{-1}(\alpha)\bigr)>u)\notag\\
+~ &P(\sqrt{n}\sup_{t\in J_{\mathbb{Q}},\alpha\in A_{\mathbb{Q}}}\bigl(F_{t}^{-1}(\alpha)-F_{n,t}^{-1}(\alpha)\bigr)>u)\notag\\
 =~ &P(\sqrt{n}\sup_{t\in J_{\mathbb{Q}},\alpha\in A_{\mathbb{Q}}}\bigl(F_{n,t}^{-1}(\alpha)-F_{t}^{-1}(\alpha)\bigr)>u)\notag\\
 +~ &P(\sqrt{n}\sup_{t\in J_{\mathbb{Q}},\alpha\in A_{\mathbb{Q}}}\bigl(-q_{t}(1-\alpha)+q_{n,t}(1-\alpha)\bigr)>u).\label{other quantile}
 \end{align} 
Thus, the second term can be treated the same as the first term. 
For the first term we have 
\begin{align}\label{minmax}P&(\sqrt{n}\sup_{t\in J_{\mathbb{Q}},\alpha\in A_{\mathbb{Q}}}\bigl(F_{n,t}^{-1}(\alpha)-F_{t}^{-1}(\alpha)\bigr)>u)\notag\\
&=P(\exists t\in J_{\mathbb{Q}},\alpha\in A_{\mathbb{Q}}, F_{n,t}^{-1}(\alpha)>\dfrac{u}{\sqrt{n}}+F_{t}^{-1}(\alpha))\notag\\
&= P(\exists t\in J_{\mathbb{Q}}, \alpha\in A_{\mathbb{Q}}, \exists I, \#I= \lfloor n(1-\alpha)\rfloor, X_{j}(t)\notag\\
&\quad >\dfrac{u}{\sqrt{n}}+F_{t}^{-1}(\alpha), \forall j\in I)\notag\\
&\le P(\exists t\in J_{\mathbb{Q}}, \alpha\in A_{\mathbb{Q}}, \exists I, \#I= \lfloor n(1-\alpha^{*}) \rfloor, X_{j}(t)\notag\\
&\quad >\dfrac{u}{\sqrt{n}}+F_{t}^{-1}(\alpha), \forall j\in I)\notag
\end{align}
and again by Lemma \ref{quantile comparison}, since  $F_{t}^{-1}(\alpha)\ge F_{t}^{-1}(1-\alpha^{*})~ \text{for all}~ \alpha \in A_{\mathbb{Q}}$,
\begin{align}
&\le \binom{n}{\lfloor n(1-\alpha^{*})\rfloor}P(\exists t\in J_{\mathbb{Q}}, X_{j}(t)>\dfrac{u}{\sqrt{n}}-F_{t}^{-1}(\alpha^{*}),\notag\\
&\quad  j=1,\ldots, \lfloor n(1-\alpha^{*})\rfloor) \notag\\
&\le\binom{n}{\lfloor n(1-\alpha^{*})\rfloor}P(\min_{j\le \lfloor n(1-\alpha^{*})\rfloor }\|X_{j}\|_{J_{\mathbb{Q}}}>\dfrac{u}{\sqrt{n}}-F_{1}^{-1}(\alpha^{*}))\notag\\
&\le\binom{n}{\lfloor n(1-\alpha^{*})\rfloor}\bigl[P(\|X\|_{J_{\mathbb{Q}}}>\dfrac{u}{\sqrt{n}}-F_{1}^{-1}(\alpha^{*}))\bigr]^{\lfloor n(1-\alpha^{*})\rfloor }\notag\\
&\le \bigl[\dfrac{e}{1-\alpha^{*}}P(\|X\|_{J_{\mathbb{Q}}}>\dfrac{u}{\sqrt{n}}-F_{1}^{-1}(\alpha^{*}))\bigr]^{\lfloor n(1-\alpha^{*})\rfloor}
\end{align}
Now, for our stable process it is known, see Proposition 5.6 of  \cite{led-tal-book}, that there exists a constant, $c_{r}$, such that 
\[P(\|X\|_{J_{\mathbb{Q}}}>v)\le c_{r} v^{-r}.
\]
Hence, by (\ref{minmax}), if $\dfrac{u}{\sqrt{n}}\ge C\ge 2F_{1}^{-1}(\alpha^{*})$, we have for $\lambda_{r}^{r}:=\dfrac{2^{r}e c_{r}}{1-\alpha^{*}}$, 
\begin{align}\label{final minmax}
&P(\sqrt{n}\sup_{t\in J_{\mathbb{Q}},\alpha\in A_{\mathbb{Q}}}\bigl(F_{n,t}^{-1}(\alpha)-F_{t}^{-1}(\alpha)\bigr)>u)\notag\\
&\le \bigl[\dfrac{e}{1-\alpha^{*}}P(\|X\|_{J_{\mathbb{Q}}}>\dfrac{u}{2\sqrt{n}})\bigr]^{\lfloor n(1-\alpha^{*})\rfloor}\notag\\
&\le\bigl[\dfrac{e}{1-\alpha^{*}}c_{r}(\dfrac{2\sqrt{n}}{u})^{r}\bigr]^{\lfloor n(1-\alpha^{*})\rfloor }=\bigl[\frac{\lambda_r \sqrt n}{u}\bigr]^{r \lfloor n(1-\alpha^{*})\rfloor }.
\end{align}

Therefore, taking
$u/\sqrt n \ge C \equiv 2\lambda_r \vee 2F_1^{-1}(\alpha^{*})$,  (\ref{final minmax}) implies
\begin{align}\label{final minmax-1}P&(\sqrt{n}\sup_{t\in J_{\mathbb{Q}},\alpha\in A_{\mathbb{Q}}}\bigl(F_{n,t}^{-1}(\alpha)-F_{t}^{-1}(\alpha)\bigr)>u)\le\bigl[\frac{\lambda_r \sqrt n}{u}\bigr]^{r \lfloor n(1-\alpha^{*})\rfloor }\notag\\
 &\phantom{*************} \le 
2^{-(r \lfloor n(1-\alpha^{*})\rfloor -2)}(\frac{\lambda_r \sqrt n}{u})^2,
\end{align}
and hence $n$ sufficiently large implies
\begin{align}\label{final minmax-2}P(\sqrt{n}\sup_{t\in J_{\mathbb{Q}},\alpha\in A_{\mathbb{Q}}}\bigl(F_{n,t}^{-1}(\alpha)-F_{t}^{-1}(\alpha)\bigr)>u)\le u^{-2}.
\end{align}

Since the same estimates apply to the second term in (\ref{other quantile}), we have by putting the two parts together that
\begin{align}
\E&[\sup_{t\in J_{\mathbb{Q}}, \alpha\in A_{\mathbb{Q}}} \sqrt n |F_{n,t}^{-1}(\alpha)-F_{t}^{-1}(\alpha)|]\notag\\
&\le 2 \int_{0}^{\infty}P(\sqrt{n}\sup_{t\in J_{\mathbb{Q}}, \alpha\in A_{\mathbb{Q}}}\bigl(F_{n,t}^{-1}(\alpha)-F_{t}^{-1}(\alpha)\bigr)>u)\, du\notag\\
&\le 2[1+\int_{0}^{C\sqrt{n}}P(\sqrt{n}\sup_{t\in J_{\mathbb{Q}}, \alpha\in A_{\mathbb{Q}}}\bigl(F_{n,t}^{-1}(\alpha)-F_{t}^{-1}(\alpha)\bigr)>u)\, du\notag\\
&\phantom{...........}+\int_{C\sqrt{n}}^{\infty}P(\sqrt{n}\sup_{t\in J_{\mathbb{Q}}, \alpha\in A_{\mathbb{Q}}}\bigl(F_{n,t}^{-1}(\alpha)-F_{t}^{-1}(\alpha)\bigr)>u)\, du]\notag\\
&\le 2[1+\dfrac{B_{2}}{D^{2}}\int_{1}^{C\sqrt{n}}\dfrac1{u^{2}}\, du +\int_{C\sqrt{n}}^{\infty}\dfrac1{u^{2}}\, du]  < \infty,
\end{align}
provided $n$ is sufficiently large, $B_2$ and $D$ are as in (\ref{small values}), and $C \equiv 2\lambda_r \vee 2F_1^{-1}(\alpha^{*})$. Thus the hypotheses in the Lemma \ref{simplified homogeneous} are uniformly satisfied. Hence the theorem is proved. $\hfill\qed$
\end{enumerate}
\renewcommand{\qed}{}\end{proof}

Our next lemma is important in that it allows us to switch back and forth between supremums over countable and uncountable parameter sets. For example, one consequence is that the conclusion of Theorem \ref{tail bounds} can be strengthened to hold for all $t \in J$ and $\alpha \in A$ provided we ask that the processes $\{X_j(t): t \ge 0\}, j \ge 1,$ have cadlag sample paths with probability one. The lemma is as follows. The sets $A$ and $A_{\mathbb{Q}}$ are as above, but $[0,T]_{\mathbb{Q}}$ also includes the point $T$, even if it is irrational.

\begin{lem}\label{ctble-unctble-sups}
Let $\{X(t)\colon t\ge 0\}$ be a symmetric $r$-stable process with stationary, independent increments, cadlag sample paths, and such that $P(X(0)=0)=1$. Let $0<T < \infty$, $\mathbb{Q}$ the rational numbers, and define $[0,T]_{\mathbb{Q}}= ([0,T] \cap \mathbb{Q})\cup \{T\}$. Then, the  empirical quantile process $\tau_{\alpha}^n(t)$ built from i.i.d copies of $\{X(t): t \ge 0\}$ with cadlag paths on a complete probability space has right continuous paths on $[0,T]$ with probability one, and is such that
\begin{equation}\label{ctble-unctble-0}
P(\sup_{t \in [0,T], \alpha \in A} |\tau_{\alpha}^n(t) - \tau_{\alpha}(t)| = \sup_{t \in [0,T]_{\mathbb{Q}}, \alpha \in A_{\mathbb{Q}}} |\tau_{\alpha}^n(t)) - \tau_{\alpha}(t)|)=1.
\end{equation} 
Moreover, for each $t \in [0,T]$ and $n \geq 1$ with probability one the empirical quantile process  $\tau_{\alpha}^n(t)$ is left continuous in $\alpha \in (0,1)$.
\end{lem}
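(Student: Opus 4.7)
Both assertions rest on the pointwise representation of the empirical quantile as an order statistic. On a full-measure event, each $X_j$ is cadlag on $[0,T]$; I would work on this event throughout.

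I would start with the second assertion, which is the standard left continuity of the generalized inverse of a right continuous nondecreasing function. Fix $t \in [0,T]$ and $n \ge 1$ and let $\alpha_k \uparrow \alpha$ in $(0,1)$. By monotonicity of $\beta \mapsto \tau_\beta^n(t)$, $L := \lim_k \tau_{\alpha_k}^n(t) \le \tau_\alpha^n(t)$ exists. Since $F_n(t,\cdot)$ is right continuous, the infimum in \eqref{empirical quantile} yields $F_n(t,\tau_{\alpha_k}^n(t)) \ge \alpha_k$, so $F_n(t,L) \ge \alpha_k$ by monotonicity; letting $k\to\infty$ gives $F_n(t,L) \ge \alpha$, hence $\tau_\alpha^n(t) \le L$, so $L = \tau_\alpha^n(t)$.

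For the first assertion, the core is the identification $\tau_\alpha^n(t) = X_{(\lceil n\alpha \rceil)}(t)$, where $X_{(1)}(t) \le \cdots \le X_{(n)}(t)$ are the order statistics of $X_1(t),\ldots,X_n(t)$. The minimax representation
\[ X_{(k)}(t) = \min_{\{S \subseteq \{1,\ldots,n\}:\,|S|=k\}} \max_{i \in S} X_i(t), \]
combined with the elementary fact that finite pointwise minima and maxima preserve the cadlag property, shows that each $X_{(k)}$, and hence $\tau_\alpha^n$, is cadlag on $[0,T]$, so in particular right continuous. Together with the continuity of the deterministic quantile $\tau_\alpha(t) = t^{1/r} F_1^{-1}(\alpha)$ (by the scaling of the symmetric $r$-stable $X$, with $\tau_\alpha(0)=0$), the difference $g_\alpha(t) := \tau_\alpha^n(t) - \tau_\alpha(t)$ is cadlag in $t$.

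The sup-equality \eqref{ctble-unctble-0} then reduces, the inequality $\le$ being immediate, to showing that for any $(t_0,\alpha_0) \in [0,T] \times A$ there exist $(t_k,\alpha_k) \in [0,T]_\mathbb{Q} \times A_\mathbb{Q}$ with $g_{\alpha_k}(t_k) \to g_{\alpha_0}(t_0)$. I would pick $t_k \in [0,T]_\mathbb{Q}$ with $t_k \downarrow t_0$ (using $T \in [0,T]_\mathbb{Q}$). For the $\alpha_k$'s: if $n\alpha_0$ is not an integer, then $\lceil n\alpha \rceil$ is constant on a neighborhood of $\alpha_0$, and any rationals $\alpha_k \in A_\mathbb{Q}$ approaching $\alpha_0$ (from whichever side remains inside $A$) satisfy $\lceil n\alpha_k \rceil = \lceil n\alpha_0 \rceil$ for large $k$; if $n\alpha_0$ is an integer then $\alpha_0 \in \mathbb{Q}$, so $\alpha_0 \in A_\mathbb{Q}$ and one takes $\alpha_k = \alpha_0$. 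In either case
\[ \tau_{\alpha_k}^n(t_k) = X_{(\lceil n\alpha_0 \rceil)}(t_k) \to X_{(\lceil n\alpha_0 \rceil)}(t_0) = \tau_{\alpha_0}^n(t_0) \]
by right continuity of the order statistic, and $\tau_{\alpha_k}(t_k) \to \tau_{\alpha_0}(t_0)$ by joint continuity, giving $g_{\alpha_k}(t_k) \to g_{\alpha_0}(t_0)$. The main obstacle is precisely this joint approximation: one must simultaneously respect the right continuity in $t$ and the left-continuous / piecewise-constant behavior of $\alpha \mapsto \lceil n\alpha \rceil$, and carefully choose the approach direction at endpoints of $A$. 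The saving observation is that at any $\alpha_0$ for which forced one-sided approach would be delicate, $\alpha_0$ is either rational (so trivially in $A_\mathbb{Q}$) or irrational (so $n\alpha_0 \notin \mathbb{Z}$ and $\lceil n\alpha\rceil$ is locally constant there); both cases collapse to the same argument above.
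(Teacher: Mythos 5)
Your proof is correct, and its overall architecture matches the paper's: reduce everything to the order-statistic representation $\tau_\alpha^n(t)=X_{(\lceil n\alpha\rceil)}(t)$ with the index independent of $t$, get right continuity in $t$ from the cadlag paths of the $X_j$, get left continuity in $\alpha$ from the generalized inverse, and then pass between the countable and uncountable suprema. The one place where you take a genuinely different route is the central technical step, the right continuity of the order-statistic processes. The paper proves by induction that the order map is non-expansive in the sup norm, i.e.\ $\sup_j|x_{(j)}-y_{(j)}|\le\sup_j|x_j-y_j|$, and then feeds in the uniform right-continuity of $X_1,\dots,X_n$ at a fixed $t$; you instead invoke the minimax representation $X_{(k)}=\min_{|S|=k}\max_{i\in S}X_i$ and the closure of the cadlag class under finite minima and maxima. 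Your route is shorter and yields the full cadlag property (not just right continuity) essentially for free, while the paper's Lipschitz lemma is quantitative and reusable. Two smaller points in your favor: you supply the standard argument for left continuity in $\alpha$ (the paper merely asserts it from the definition), and you explicitly address the only delicate point in the sup-equality, namely that at an irrational $\alpha_0$ one has $n\alpha_0\notin\mathbb{Z}$ so $\lceil n\alpha\rceil$ is locally constant, which handles the endpoints of $A$ where one-sided approach is forced; the paper glosses over this. No gaps.
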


\begin{proof} 
First we observe that for $t>0$ the distribution function $F(t,x)$ has strictly positive density
given by
\begin{equation}\label{ctble-unctble-1}
f(t,x)= (2\pi)^{-1}\int_{\mathbb{R}} \exp{\{-ct|u|^r\}}\cos(xu) du, c>0,
\end{equation}
and hence $F(t,x)$ is strictly increasing and  continuous in $ x \in \mathbb{R}$. Thus $\tau_{\alpha}(t) = F_t^{-1}(\alpha)$ is continuous in $\alpha \in (0,1)$, and by its definition, we also have $ \tau_{\alpha}^n(t)= F_{n,t}^{-1}(\alpha)$ left continuous in $\alpha \in (0,1)$. In particular, the claim following (\ref{ctble-unctble-0}) holds.
 Moreover, for every $\alpha \in (0,1)$, $P(\tau_{\alpha}(0) = \tau_{\alpha}^n(0)=0)=1$ since we are assuming $P(X_j(0)=0)=1, j\ge1.$ Therefore, we have 
\begin{equation}\label{ctble-unctble-2}
P(\sup_{t \in [0,T], \alpha \in A} |\tau_{\alpha}^n(t) - \tau_{\alpha}(t)| = \sup_{t \in [0,T], \alpha \in A_{\mathbb{Q}}} |\tau_{\alpha}^n(t)) - \tau_{\alpha}(t)|)=1.
\end{equation} 

We also have $\tau_{\alpha}(\cdot)$ continuous in $t$ on $[0,\infty)$, since scaling easily implies $\tau_{\alpha}(t) = t^{\frac{1}{r}}\tau_{\alpha}(1)$ for all $t \ge 0$. Thus (\ref{ctble-unctble-0})
follows from (\ref{ctble-unctble-2}) provided we show $\tau_{\alpha}^n(t)$ is right continuous on $[0,T)$ with probability one, i.e. we then would have
\begin{equation}\label{ctble-unctble-3}
P(\sup_{t \in [0,T], \alpha \in A_{\mathbb{Q}}} |\tau_{\alpha}^n(t) - \tau_{\alpha}(t)| = \sup_{t \in [0,T]_{\mathbb{Q}}, \alpha \in A_{\mathbb{Q}}} |\tau_{\alpha}^n(t)) - \tau_{\alpha}(t)|)=1.
\end{equation}

To verify the right continuity of $\tau_{\alpha}^n(\cdot)$, and (\ref{ctble-unctble-3}), we use the right continuity of the paths of the processes $X_1,\cdots,X_n$. We do this through the following observation.
That is, given real numbers $x_1,\cdots,x_n$, let $x_{1,1}, \cdots, x_{1,n}$ be an ordering of these numbers such that $x_{1,1} \leq \cdots \leq x_{1,n}$. In case there are no ties among  $\{x_1,\cdots,x_n\}$ this ordering is unique, and when there are ties, we choose the ordering based on the priority of the original index among the tied numbers. We then refer to $x_{1,1} \leq \cdots \leq x_{1,n}$ as the order statistics of $\{x_1,\cdots,x_n\}$. Now take pairs $(x_1,y_1), \cdots, (x_n,y_n)$ of real numbers such that $\sup_{1 \leq j \leq n}|x_j - y_j|  \leq \delta$. These are called the initial pairs of the two sets of $n$ numbers. We will now verify by induction that the corresponding order statistics formed from these sets also satisfy
\begin{equation}\label{ctble-unctble-4}
\sup_{1 \leq j \leq n}|x_{1,j} - y_{1,j}|\leq \delta. 
\end{equation}

The case $n=1$ is obvious, so assume the result holds for all sets with cardinality less than or equal to $ n-1$.  Then, assume that in the initial pairings, $x_{1,1}$ and $y_{1,1}$ are paired with, say $y_{1,k}$ and $x_{1,l}$, respectively. Hence we have $x_{1,1} \leq x_{1,l}, y_{1,1} \leq y_{1,k}$, 
$|x_{1,1} - y_{1,k}|\leq \delta$ and $|y_{1,1} - x_{1,l}|\leq \delta.$ If $x_{1,1} \leq y_{1,1}$, then from the above we have $x_{1,1} \leq y_{1,1} \leq y_{1,k}$, and hence $|x_{1,1} - y_{1,k}|\leq \delta$ implies
$|x_{1,1} - y_{1,1}|\leq \delta$. Similarly, if $x_{1,1} > y_{1,1}$, then from the above we have $y_{1,1} < x_{1,1} \leq x_{1,l}$, and hence $|y_{1,1} - x_{1,l}|\leq \delta$ implies
$|x_{1,1} - y_{1,1}|\leq \delta$.  We also have $|x_{1,l} - y_{1,k}|\leq \delta$. That is, if $x_{1,l} \leq y_{1,k}$, then we have $x_{1,1} \leq  x_{1,l} \leq y_{1,k}$ and hence $|x_{1,1} - y_{1,k}|\leq \delta$ implies
$|x_{1,l} - y_{1,k}|\leq \delta$. Similarly, if $x_{1,l} >y_{1,k}$, then we have $y_{1,1} \leq y_{1,k} < x_{1,l}$ and hence $|y_{1,1} - x_{1,l}|\leq \delta$ implies
$|x_{1,l} - y_{1,k}|\leq \delta$.

To finish the proof  of (\ref{ctble-unctble-4}) we apply the induction hypothesis to the set of $n-1$ pairs determining $x_{1,2} \leq \cdots \leq x_{1,n}$ and $y_{1,2} \leq \cdots \leq y_{1,n}$, 
with $(x_{1,l},y_{1,k})$ being   a possibly  new pair, and the remaining $n-2$ pairs are those originally given. Note that the induction hypothesis applies to these pairs, since we have shown $|x_{1,l}-y_{1,k}| \leq \delta.$ Thus (\ref{ctble-unctble-4}) holds.

To verify the right continuity of $\tau_{\alpha}^n(\cdot)$, and hence that (\ref{ctble-unctble-3}) holds, we note that since the i.i.d. processes $X_1,\cdots,X_n$ are cadlag on $[0,\infty)$ with probability one, there is a set $\Omega_1 \subseteq \Omega$ such that $P(\Omega_1)=1$ and for every $t \in [0,T), \epsilon>0,$ there is a $\delta=\delta(\omega,t,\epsilon,n)>0$ such that $\omega \in \Omega_1$ implies 
$$
\sup_{1 \leq j \leq n, t\le s \le (t+\delta) \wedge T} |X_j(s) - X_j(t)| \leq \epsilon.
$$
Therefore, (\ref{ctble-unctble-4}) implies the order statistics $X_{1,1}(s) \leq \cdots \leq X_{1,n}(s)$ and $X_{1,1}(t) \leq \cdots \leq X_{1,n}(t)$ obtained from $\{X_1(s),\cdots,X_n(s)\}$ and  $\{X_1(t),\cdots$, $X_n(t)\}$ are such that
\begin{equation}\label{ctble-unctble-5}
\sup_{1 \leq j \leq n, t\le s \le (t+\delta) \wedge T} |X_{1,j}(s) - X_{1,j}(t)| \leq \epsilon. 
\end{equation}
Since $\epsilon>0$ is arbitrary, we thus have that the order statistic processes $\{X_{1,j}(t): t \in [0,T)\}, j=1,\cdots,n,$ are right continuous on $\Omega_1$, and hence with probability one.

Now for $0<\alpha <1, n \geq 1, t \in [0,\infty)$ we have $\tau_{\alpha}^n(t) = \inf\{x: F_n(t,x) \geq \alpha\}$, and hence 
$$
\tau_{\alpha}^n(t) = X_{1,j(\alpha)}(t), 
$$
where $j(\alpha)= \min\{ k: 1 \leq k \leq n, k/n  \geq \alpha\}$ is independent of $t \in E$. 
Thus for all $\omega \in \Omega_1$ 
we have $\tau_{\alpha}^n(t)$ right continuous in $t \in [0,T)$. Hence the lemma is proven.
\end{proof}
 
\begin{cor}\label{prob-cont-zero}
Let $\{X(t)\colon t\ge 0\}$ be a symmetric $r$-stable process with stationary, independent increments, cadlag sample paths, and such that $P(X(0)=0)=1$. Also, assume the  empirical quantile processes $\tau_{\alpha}^n(t)$ are built from i.i.d copies of $\{X(t): t \ge 0\}$ with cadlag paths. Then, 
there exists an integer $n_0 \ge 1$ such that for every $\epsilon>0$ there is  a $\delta=\delta(\epsilon)>0$ satisfying
\begin{equation}\label{cont-zero}
\sup_{ n \ge n_0}P(\sup_{t \in [0,\delta], \alpha \in A} \sqrt n |\tau_{\alpha}^n(t) - \tau_{\alpha}(t)| >\epsilon)\le \epsilon. 
\end{equation} 
\end{cor}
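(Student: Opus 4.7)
The plan is to combine Theorem~\ref{tail bounds}, the scaling Lemma~\ref{simplified homogeneous}, a single Markov inequality, and the countable/uncountable comparison from Lemma~\ref{ctble-unctble-sups}. First, observe that by the stable scaling of the input process $\{X(t)\colon t \ge 0\}$, for every $c>0$ the process $\{X(c t)\}$ has the same law as $\{c^{1/r} X(t)\}$, which immediately yields $F_{ct}^{-1}(\alpha) = c^{1/r} F_t^{-1}(\alpha)$ and, jointly for the $n$ i.i.d.\ copies, the same identity for $F_{n,ct}^{-1}(\alpha)$. Hence the centered empirical quantile processes
\[
W_n(t,\alpha) := \sqrt n\bigl(F_{n,t}^{-1}(\alpha) - F_t^{-1}(\alpha)\bigr), \quad t \ge 0, \ \alpha \in (0,1),
\]
are $p$-scalable with $p = 1/r$, and $W_n(0,\alpha) = 0$ a.s.

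Next I would apply Lemma~\ref{simplified homogeneous} with $q=1$ to each $W_n$, giving
\[
\E\!\left[\sup_{t \in (0,\delta]_{\mathbb{Q}},\, \alpha \in A_{\mathbb{Q}}} |W_n(t,\alpha)|\right]
\le \frac{\delta^{1/r}}{1-2^{-1/r}}\, \E\!\left[\sup_{t \in J_{\mathbb{Q}},\, \alpha \in A_{\mathbb{Q}}} |W_n(t,\alpha)|\right].
\]
By Theorem~\ref{tail bounds} there exists $n_0$ and a finite constant $M := \sup_{n \ge n_0} \E\bigl[\sup_{t\in J_{\mathbb{Q}},\, \alpha\in A_{\mathbb{Q}}} |W_n(t,\alpha)|\bigr]$. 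Applying Markov's inequality then yields, for every $\epsilon > 0$,
\[
\sup_{n \ge n_0} P\!\left(\sup_{t \in (0,\delta]_{\mathbb{Q}},\, \alpha \in A_{\mathbb{Q}}} \sqrt n\,|\tau_\alpha^n(t) - \tau_\alpha(t)| > \epsilon \right)
\le \frac{M\,\delta^{1/r}}{\epsilon\,(1-2^{-1/r})}.
\]
Choosing $\delta = \delta(\epsilon) > 0$ so small that the right-hand side is at most $\epsilon$, and noting that since $P(\tau_\alpha^n(0) = \tau_\alpha(0) = 0) = 1$ for every $\alpha$, the point $t=0$ contributes nothing to the supremum, we obtain the desired bound in which the supremum is over $[0,\delta]_{\mathbb{Q}} \times A_{\mathbb{Q}}$.

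Finally, I would invoke Lemma~\ref{ctble-unctble-sups}, whose conclusion~\eqref{ctble-unctble-0} (applied with $T$ replaced by $\delta$, which is harmless since the lemma is stated for arbitrary $0<T<\infty$) asserts that with probability one,
\[
\sup_{t \in [0,\delta],\, \alpha \in A} |\tau_\alpha^n(t) - \tau_\alpha(t)|
\;=\;
\sup_{t \in [0,\delta]_{\mathbb{Q}},\, \alpha \in A_{\mathbb{Q}}} |\tau_\alpha^n(t) - \tau_\alpha(t)|.
\]
Substituting this identity into the previous probability estimate yields \eqref{cont-zero}. The routine but mildly delicate point is checking that the countable rational sup on $(0,\delta]$ is what Lemma~\ref{simplified homogeneous} actually controls, and that the scaling constant $p=1/r$ is unaffected by the $n$-dependence of $W_n$; neither poses a genuine obstacle, since all $W_n$ share the same scaling exponent via the joint scaling of the i.i.d.\ copies.
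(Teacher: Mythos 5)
Your proposal is correct and follows essentially the same route as the paper: the paper's proof of this corollary is simply to cite Theorem~\ref{tail bounds} (whose second displayed conclusion is already the rational-parameter version of \eqref{cont-zero}, obtained there exactly via Lemma~\ref{simplified homogeneous} plus Chebyshev/Markov as you redo) together with Lemma~\ref{ctble-unctble-sups} to replace the countable supremum by the full one. You have merely unpacked the steps the paper leaves implicit, so no further comment is needed.
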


\begin{proof}
The proof of the corollary follows immediately from Theorem \ref{tail bounds}
and Lemma \ref{ctble-unctble-sups}.
\end{proof}

The next theorem shows an empirical quantile CLT holds on $[0,T]$ for the symmetric stable processes discussed here, which contrasts with the remarks at the end of section 3 showing that the empirical CLT for such processes fails. It also extends the results of \cite{swanson-scaled-median} and \cite{swanson-fluctuations} for Browning motion, showing the CLT is uniform in $\alpha \in I$, where $I$ is a closed subinterval of $(0,1)$.

\begin{thm}\label{quant-CLT-near-zero}
Let $\{X(t)\colon t\ge 0\}$ be a symmetric $r$-stable process with stationary, independent increments, cadlag sample paths, and such that $P(X(0)=0)=1$. Also, assume the  empirical quantile processes $\tau_{\alpha}^n(t)$ are built from i.i.d copies of $\{X(t): t \ge 0\}$ with cadlag paths, and $I$ is a closed subinterval of $(0,1)$. Then, the quantile processes
\begin{equation}\label{quant-CLT-near-zero-1}
\{\sqrt n (\tau_{\alpha}^n(t)- \tau_{\alpha}(t)) \colon\  n \geq 1\}
\end{equation}
satisfy the CLT in $\ell_{\infty}([0,T] \times I)$ with centered Gaussian limit process 
\begin{equation}\label{quant-CLT-near-zero-2}
\left\{W(t,\alpha)\colon\  (t,\alpha) \in [0,T] \times I \right\},
\end{equation}
where $W(0,\alpha)=0, \alpha \in I$, 
\begin{equation}\label{quant-CLT-near-zero-3}
W(t,\alpha) = \frac{G(t,\tau_{\alpha}(t))}{f(t, \tau_{\alpha}(t))}, (t,\alpha) \in (0,T] \times I,
\end{equation}
and for $(s, \beta),(t,\alpha) \in (0,T] \times I$ the covariance function is given by
\begin{equation}\label{quant-CLT-near-zero-4}
\E(W(s,\beta)W(t,\alpha)) = \frac{P(X(s) \le \tau_{\beta}(s),X(t) \le \tau_{\alpha}(t))- \alpha \beta}{f(s,\tau_{\beta}(s))f(t, \tau_{\alpha}(t))}.
\end{equation}
\end{thm}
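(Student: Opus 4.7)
The empirical CLT over $\mathcal{C}$ on $[0,T]\times\mathbb{R}$ fails for this input process by Corollary \ref{cor3.3}, so Theorem \ref{quantilethm} does not apply directly. My plan is three-fold: (i) invoke the quantile CLT on $[a,T] \times I$ for every $a \in (0,T)$, as provided by the opening remarks of Section \ref{sec5}; (ii) patch these consistent weak limits into a single centered Gaussian process $W$ on $(0,T]\times I$ and extend by $W(0,\alpha)=0$; and (iii) propagate the CLT from $[a,T]\times I$ to $[0,T]\times I$ using the uniform near-zero control supplied by Corollary \ref{prob-cont-zero}.

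Let $Z_n(t,\alpha) := \sqrt n\,(\tau_\alpha^n(t) - \tau_\alpha(t))$. Since $X_j(0)=0$ a.s.\ for every $j$ we have $\tau_\alpha^n(0)=\tau_\alpha(0)=0$, so $Z_n(0,\alpha)=0$ identically, which forces $W(0,\alpha)=0$. For each $a \in (0,T)$, the opening remarks of Section \ref{sec5} yield weak convergence of $\{Z_n\}$ in $\ell_\infty([a,T]\times I)$ to a centered Gaussian process with covariance (\ref{quant-CLT-near-zero-4}); these limits agree on overlaps and so determine a single Gaussian $W$ on $(0,T]\times I$ given by (\ref{quant-CLT-near-zero-3}). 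Using the $r$-scaling $X(ct)\stackrel{d}{=}c^{1/r}X(t)$ one checks that $\tau_\alpha(ct)=c^{1/r}\tau_\alpha(t)$ and $f(ct,c^{1/r}x)=c^{-1/r}f(t,x)$, so a direct calculation gives $\mathrm{Var}(W(t,\alpha))=\alpha(1-\alpha)\,t^{2/r}/f_1(\tau_\alpha(1))^2$, which vanishes uniformly in $\alpha\in I$ as $t\downarrow 0$, consistent with setting $W(0,\alpha)=0$.

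Choose $\alpha^* \in (1/2,1)$ with $I \subseteq A := [1-\alpha^*,\alpha^*]$, which is possible since $I$ is a closed subinterval of $(0,1)$. Corollary \ref{prob-cont-zero} then produces an $n_0$ and, for every $\epsilon>0$, a $\delta=\delta(\epsilon)>0$ with
\[
\sup_{n \ge n_0} P^{*}\!\left(\sup_{t \in [0,\delta],\,\alpha \in I}|Z_n(t,\alpha)|>\epsilon\right)\le \epsilon.
\]
Combining this with the weak convergence on $[a,\delta]\times I$ for every $a>0$, and passing to the limit via the portmanteau theorem together with monotone convergence in $a\downarrow 0$, transfers the estimate to the limit process,
\[
P\!\left(\sup_{t \in [0,\delta],\,\alpha \in I}|W(t,\alpha)|>\epsilon\right)\le \epsilon,
\]
confirming that $W$ extends uniformly continuously to $[0,T]\times I$ and has separable support there in $\ell_\infty([0,T]\times I)$.

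With these ingredients the CLT on $\ell_\infty([0,T]\times I)$ follows from convergence of finite-dimensional distributions together with asymptotic equicontinuity, in the sense of Theorem \ref{thm2.1} and Section 3.6 of \cite{Dudley-unif-clt}. Finite-dimensional convergence is trivial whenever a coordinate equals $0$ and, at any other point, is inherited from the CLT on $[t/2,T]\times I$ with $t$ the smallest positive coordinate. Asymptotic equicontinuity decomposes along the threshold $t=\delta$: oscillations strictly inside $[\delta,T]\times I$ are handled by the CLT there, while any oscillation involving a coordinate in $[0,\delta]$ is dominated by twice the Corollary \ref{prob-cont-zero} bound (and its analogue for $W$). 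I expect the main obstacle to be precisely this last step --- making the gluing at $t=\delta$ rigorous inside the outer-measure framework, and verifying the matching equicontinuity of $W$ at $t=0$ --- so that Theorem \ref{thm2.1} can be applied in $\ell_\infty([0,T]\times I)$ to conclude.
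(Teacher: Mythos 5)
Your proposal is correct and follows essentially the same route as the paper: the paper also decomposes $[0,T]\times I$ into $E_1=[0,\delta]\times I$, controlled uniformly in $n$ by Corollary \ref{prob-cont-zero} (via Theorem \ref{tail bounds} and Lemma \ref{ctble-unctble-sups}), plus a finite partition of $[\delta,T]\times I$ supplied by the quantile CLT of Theorem \ref{quantilethm} on $[\delta,T]$, and then concludes by the finite-dimensional-convergence-plus-asymptotic-equicontinuity criterion (Theorems 1.5.4 and 1.5.6 of \cite{vw}). The only cosmetic difference is that you explicitly patch the limits on $[a,T]$ into $W$ and transfer the near-zero bound to it, whereas the partition criterion delivers the limit process and its continuity automatically once the oscillation bounds are in place.
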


\begin{rem}\label{median-CLT-near-zero-1}
Since the process $\{X(t): t \ge 0\}$ in Theorem \ref{quant-CLT-near-zero} is scalable with parameter $\frac{1}{r}$, it is easy to check that $\tau_{\alpha}(t) = t^{\frac{1}{r}}\tau_{\alpha}(1)$ for $(t,\alpha) \in [0, \infty)\times (0,1)$. In addition, since the density of $X(t)$ is strictly positive for each $t>0$, it is easy to check for $t>0, x \in \mathbb{R}$ that 
$$
f(t,x)=t^{-\frac{1}{r}}f(1, xt^{-\frac{1}{r}}).
$$
Thus for $t>0, \alpha \in (0,1)$,
$$
f(t,\tau_{\alpha}(t))= t^{-\frac{1}{r}}f(1,\tau_{\alpha}(1)),
$$
and for $(t,\alpha), (s,\beta) \in (0,T]\times I$, (\ref{quant-CLT-near-zero-4}) becomes
\begin{equation}\label{quant-CLT-near-zero-14}
\E(W(s,\beta)W(t,\alpha)) = s^{\frac{1}{r}}t^{\frac{1}{r}}[\frac{P(X(s) \le \tau_{\beta}(s),X(t) \le \tau_{\alpha}(t))- \alpha \beta}{f(1,\tau_{\beta}(1))f(1, \tau_{\alpha}(1))}].
\end{equation}
Furthermore, since  $W(0,\alpha)=0, \alpha \in I$, we  also have (\ref{quant-CLT-near-zero-14}) when  $(t,\alpha), (s,\beta)$ $\in [0,T]\times I$.

To get the covariance for the limiting median process, set $I=\{\frac{1}{2}\}$. Then, since $\tau_{\frac{1}{2}}(t)=0$  for all $t \in [0,T]$, we have the limiting Gaussian process such that $P(W(0,\frac{1}{2})=0)=1$ and for $s,t \in [0,T]$ its covariance is
\begin{equation}\label{quant-CLT-near-zero-15}
\E(W(s,\frac{1}{2})W(t,\frac{1}{2})) =s^{\frac{1}{r}}t^{\frac{1}{r}} \frac{P(X(s) \le 0,X(t) \le 0)- \frac{1}{4}}{f(1,0)f(1,0)}.
\end{equation}
The density $f(t,x)$ is as in (\ref{ctble-unctble-1}), and hence
\begin{equation}\label{quant-CLT-near-zero-16}
f(t,0)= \frac{\int_{\mathbb{R}} \exp\{-|u|^r\}du}{2\pi(ct)^{\frac{1}{r}}}, t>0,
\end{equation}
which implies for $s,t \in [0,T]$ that
\begin{equation}\label{quant-CLT-near-zero-17}
\E(W(s,\frac{1}{2})W(t,\frac{1}{2})) = \frac{4\pi^2(c^2st)^{\frac{1}{r}}}{(\int_{\mathbb{R}} \exp\{-|u|^r\}du)^2 }[P(X(s) \le 0,X(t) \le 0)- \frac{1}{4}].
\end{equation}
In these examples the sample paths of the input process $X$  are assumed to be cadlag, and when  $r=2$ they could be assumed to be continuous as $X$ is then a Brownian motion. Hence, after the proof of Theorem \ref{quant-CLT-near-zero} we will examine the consequences of these path properties for the quantile CLT. At this time we also will discuss results comparable to those for sample continuous Brownian motion when the input data comes from any sample continuous fractional Brownian motion. 
\end{rem}

\begin{proof}

Let
$$
W_n(t,\alpha)= \sqrt n((\tau_{\alpha}^n(t)- \tau_{\alpha}(t)), t \in [0,T], \alpha \in I,n \geq 1.
$$
Then, $P(W_n(0,\alpha)=0)=$1 for $\alpha \in I, n \ge 1,$ and  the finite dimensional distributions of $W_n$ converge to the centered Gaussian distributions given by the covariance function in (\ref{quant-CLT-near-zero-4}) for $t \in(0,T], \alpha \in I$. Hence Theorem 1.5.4 and Theorem 1.5.6 of \cite{vw} combine to imply the quantile processes $\{W_n: n\ge 1\}$ satisfy the CLT in $\ell_{\infty}([0,T]\times I)$, where the limiting centered Gaussian process has  the covariance in (\ref{quant-CLT-near-zero-4}), provided for every $\epsilon >0, \eta>0 $ there is a partition
\begin{equation}\label{quant-CLT-near-zero-5}
[0,T] \times I= \cup_{i=1}^k E_i
\end{equation}
such that
\begin{equation}\label{quant-CLT-near-zero-6}
\limsup_{n \rightarrow \infty}P^{*}( \sup _{1 \le i \le k} \sup_{(t,\alpha),(s,\beta) \in E_i}|W_n(t,\alpha) - W_n(s,\beta)| > \epsilon) \le \eta.
\end{equation}

Since $I$ is a closed subinterval of $(0,1)$, there is an $\alpha^{*} \in (\frac{1}{2}, 1)$ such that $I \subseteq A= [1-\alpha^{*}, \alpha^{*}]$. For $\delta>0$ and $E_1=[0,\delta] \times I$ observe that
\begin{align*}
&P^{*}( \sup_{(t,\alpha),(s,\beta) \in E_1}|W_n(t,\alpha) - W_n(s,\beta)| >\frac{ \eta \wedge \epsilon}{2})\\
 \le~ &2P^{*}(\sup_{ s \in [0,\delta], \alpha \in I} |W_n(s,\alpha)| >  \frac{ \eta \wedge \epsilon}{4}).
\end{align*}
Hence, (\ref{ctble-unctble-0}) and (\ref{cont-zero}) imply there is a $\delta=\delta(\frac{\eta \wedge \epsilon}{4})$ such that 
\begin{equation}\label{quant-CLT-near-zero-7}
\limsup_{n \rightarrow \infty} P^{*}( \sup_{(s,\alpha),(t,\beta) \in E_1}|W_n(s,\alpha) - W_n(t,\beta)| >\frac{ \eta \wedge \epsilon}{2}) \le 2(\frac{ \eta \wedge \epsilon}{4})\le \frac{\eta}{2}.
\end{equation}

Now Theorem \ref{quantilethm} above implies the CLT for $\{W_n(t,\alpha): (t, \alpha) \in [\delta,T]\times I\} $ in $\ell_{\infty}([\delta,T] \times I)$, and hence Theorem 1.5.4 of \cite{vw} implies that there is a partition  $[\delta, T]\times I= \cup_{i=2}^k E_i$ such that 
\begin{equation}\label{quant-CLT-near-zero-8}
\limsup_{n \rightarrow \infty}P^{*}( \sup _{2 \le i \le k} \sup_{(s,\alpha),(t,\beta) \in E_i}|W_n(s,\alpha) - W_n(t,\beta)| >\frac{ \eta \wedge \epsilon }{2}) \le \frac{ \eta \wedge \epsilon}{2}.
\end{equation}
Combining (\ref{quant-CLT-near-zero-7}) and (\ref{quant-CLT-near-zero-8}) we have (\ref{quant-CLT-near-zero-5}) and (\ref{quant-CLT-near-zero-6}), and hence the theorem is proved.
\end{proof}

Now we turn to the question of how special sample path properties of the input process $\{X_t: t \in E\}$ influence our quantile CLT's. To be more specific, recall that the CLT results we have established for empirical quantile processes, hold uniformly in the space $\ell_{\infty}(E\times I)$, and the limiting Gaussian process $\{W(t,\alpha): (t,\alpha) \in E\times I\}$, almost surely,  has a version with paths which are bounded and uniformly continuous with respect to its own $L_2$ distance $d_W$ on $E\times I$. In particular, this guarantees that the measure induced by the Gaussian process on $\ell_{\infty}(E\times I)$ is supported on the subspace $C_{L_2}(E\times I)$ of $\ell_{\infty}(E\times I)$, where the subscript $L_2$ is written to indicate the topology on $E \times I$ is that given by the Gaussian process $L_2$ distance. Hence, if the input process $\{X_t: t \in E\}$ is assumed sample continuous on $(E,e_1)$, where $e_1$ is a metric on $E$, when does our quantile CLT
  with $\alpha \in (0,1)$ fixed hold on the space of $e_1$ continuous paths? If $E=[0,T]$ with metric the usual Euclidean distance,  and the input process has cadlag sample paths on $[0,T]$, a similar question can be asked if the quantile CLT holds in some related space of functions. Since processes with continuous paths or cadlag paths are typical of many examples throughout probability and statistics, these are natural questions,  but they also relate to some recent results of Jason Swanson. That is, he established a CLT in the space of continuous functions on $[0,T]$ for the median process obtained from sample continuous Brownian motions in \cite{swanson-scaled-median}, and for other individual quantile levels $\alpha \in (0,1)$ in \cite{swanson-fluctuations}. These results will follow from our next theorem, and are established in a remark following its proof.

Since the empirical quantile processes have jumps as $\alpha$ ranges over $(0,1)$, to state our theorem providing some facts related to these questions, we need the following function spaces. If $e_1$ is a metric on $E$ we set
\begin{equation}\label{quant-CLT-functions-1}
\mathbb{C}_{e_1}(E)=\{z: z \rm {~is~continuous~on~} (E,e_1)\},
\end{equation}
if $E=[0,T]$ we assume $e_1$ is the usual Euclidean distance and let
\begin{equation}\label{quant-CLT-functions-2}
\mathbb{D}_1([0,T])=\{z: z \rm {~is~cadlag~on~} [0,T]\},
\end{equation}
where right and left limits are taken with respect to $e_1$ on $[0,T]$, and for $I=[a,b]$ a closed subinterval of $(0,1)$ we set
\begin{equation}\label{quant-CLT-functions-3}
\mathbb{D}_2(I)=\{z: z \rm {~is~left~continuous~on~} (a,b], \rm {~and~ has~right~limits~on~}[a,b)\},
\end{equation}
where right and left limits are taken with respect to the usual Euclidean distance $e_2$ on $I$. We also define the closed subspaces of $\ell_{\infty}(E\times I)$ given by
\begin{equation}\label{quant-CLT-functions-4}
\mathbb{C}_{e_1}(E) \otimes\mathbb{D}_2(I)=\{ f(\cdot,\alpha) \in \mathbb{C}_{e_1}(E) ~\forall \alpha \in I  \text{ and } f(t,\cdot) \in \mathbb{D}_2(I)~\forall t\in [0,T]\},
\end{equation}
and 
\begin{align}\label{quant-CLT-functions-5}
&\mathbb{D}_1([0,T])\otimes\mathbb{D}_2(I)=\{ f(\cdot,\alpha) \in \mathbb{D}_1([0,T]) ~\forall \alpha \in I\\
&\quad  \text{ and } f(t,\cdot) \in \mathbb{D}_2(I)~\forall t\in [0,T]\}.\notag
\end{align}
Both $\mathbb{C}_{e_1}(E) \otimes\mathbb{D}_2(I)$ and $\mathbb{D}_1([0,T])\otimes\mathbb{D}_2(I)$ are closed subspaces of\break $\ell_{\infty}(E \times I))$.

\begin{thm}\label{quant-CLT-paths}
Let $\{X_t: t \in E\}$ be the input process for the empirical quantile processes  defined for $t \in E, \alpha \in (0,1), n \ge 1,$ by
$$
W_n(t,\alpha) = \sqrt n(\tau_{\alpha}^n(t) - \tau_{\alpha}(t)),
$$
and assume they satisfy the empirical quantile CLT in $\ell_{\infty}(E \times I)$ with Gaussian limit $\{W(t,\alpha): (t,\alpha) \in E\times I\}$. Let $d_W$ denotes the $L_2$ distance of $W$ on $E \times I$, and assume the identity map $j$ on $E\times I$ is continuous from the $e_1 \times e_2$ topology on $E\times I$ to the $d_W$ topology, and that $\tau_{\alpha}(\cdot) \in \mathbb{C}_{e_1}(E)$ for every $ \alpha \in I$. Then, we have:

(i) If $\{X_t: t \in E\}$ has version with paths in $\mathbb{C}_{e_1}(E)$, then the empirical quantile CLT holds in the Banach subspace $\mathbb{C}_{e_1}(E) \otimes\mathbb{D}_2(I)$ of $\ell_{\infty}(E\times I)$. In particular, if $\alpha \in (0,1)$ is fixed, then the CLT will hold in the space of continuous functions $\mathbb{C}_{e_1}(E)$ with the topology that given by the sup-norm. 

(ii) If $E=[0,T]$ and $\{X_t: t \in E\}$ has version with paths in $\mathbb{D}_{1}([0,T])$, we have the empirical quantile CLT holding in the Banach subspace $\mathbb{D}_{1}([0,T]) \otimes\mathbb{D}_2(I)$ of  $\ell_{\infty}([0,T]\times I)$. Hence, if $\alpha \in (0,1)$ is fixed, then the CLT will hold in the space of functions $\mathbb{D}_{1}([0,T])$ with the topology that given by the sup-norm.

\end{thm}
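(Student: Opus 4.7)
The plan is to exploit the fact that both $\mathbb{C}_{e_1}(E)\otimes\mathbb{D}_2(I)$ and $\mathbb{D}_1([0,T])\otimes\mathbb{D}_2(I)$ are \emph{closed} subspaces of $\ell_{\infty}(E\times I)$, so that once both the approximating processes $W_n$ and the Gaussian limit $W$ are shown to take values in the appropriate subspace (with probability one), the CLT in that subspace follows automatically from the CLT already assumed on $\ell_{\infty}(E\times I)$. This reduces the theorem to two sample-path assertions: (a) $W_n$ has paths in the target subspace for each $n$, and (b) $W$ does as well.

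For (b), I use the assumption that the identity map from $(E\times I,e_1\times e_2)$ to $(E\times I,d_W)$ is continuous. Since the limiting Gaussian process has a version whose paths are bounded and $d_W$-uniformly continuous, composition shows $W$ is jointly continuous in $(t,\alpha)$ for the $e_1\times e_2$ topology. Because $\tau_{\alpha}(\cdot)\in\mathbb{C}_{e_1}(E)$ by assumption, joint continuity gives $W(\cdot,\alpha)\in\mathbb{C}_{e_1}(E)$ (and hence in $\mathbb{D}_1([0,T])$ in case (ii)), while joint continuity in $\alpha$ is stronger than left-continuity with right limits, so $W(t,\cdot)\in\mathbb{D}_2(I)$ trivially. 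This handles both (i) and (ii) for the limit.

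For (a), I use the representation $\tau_{\alpha}^n(t)=X_{1,j(\alpha)}(t)$ employed at the end of the proof of Lemma \ref{ctble-unctble-sups}, where $X_{1,1}(t)\le\cdots\le X_{1,n}(t)$ are the order statistics of $X_1(t),\ldots,X_n(t)$ and $j(\alpha)=\min\{k\colon k/n\ge\alpha\}$. The map $\alpha\mapsto j(\alpha)$ is a piecewise-constant step function that jumps only at the points $k/n$ and is left-continuous there with right limits; consequently, for each fixed $t$, $\tau_\alpha^n(t)$ lies in $\mathbb{D}_2(I)$ as a function of $\alpha$. Subtracting $\tau_{\alpha}(t)$, which is continuous in $\alpha$ since $F(t,\cdot)$ is strictly increasing and continuous, keeps $W_n(t,\cdot)$ in $\mathbb{D}_2(I)$. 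For the $t$-direction I invoke the order-statistic comparison inequality (\ref{ctble-unctble-4}) that is proved inside Lemma \ref{ctble-unctble-sups}: uniform control of $|X_j(s)-X_j(t)|$ across $j$ passes to uniform control of $|X_{1,j}(s)-X_{1,j}(t)|$ across $j$. In case (ii) this is exactly the argument given in Lemma \ref{ctble-unctble-sups} for cadlag paths, and reversing time gives left limits; in case (i) the same inequality applied to the modulus of $e_1$-uniform continuity of each $X_j$ on compact sets implies each order-statistic process is in $\mathbb{C}_{e_1}(E)$. Since $\tau_{\alpha}(\cdot)\in\mathbb{C}_{e_1}(E)$ by hypothesis, $W_n(\cdot,\alpha)$ inherits the same regularity in $t$.

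Assembling the pieces, with probability one both $W_n$ and $W$ take values in the closed Banach subspace in question. Hence the empirical quantile CLT in $\ell_{\infty}(E\times I)$ restricts to a CLT in that subspace equipped with the induced sup-norm topology. The specialization to fixed $\alpha\in(0,1)$ then follows by continuity of evaluation. The main obstacle I anticipate is the joint path-regularity of $W_n$ in case (i): the cadlag argument from Lemma \ref{ctble-unctble-sups} used the one-sided control afforded by the right-continuity of $X_j$, and to replicate it for continuity one must run inequality (\ref{ctble-unctble-4}) from both sides over arbitrary $e_1$-small neighborhoods. I expect this to go through with only cosmetic modifications, but it is the one place where care is required.
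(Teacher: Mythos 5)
Your proposal is correct and follows essentially the same route as the paper: show that $W_n$ lies a.s.\ in the closed subspace via the order-statistic representation $\tau_{\alpha}^n(t)=X_{1,j(\alpha)}(t)$ and inequality (\ref{ctble-unctble-4}) from Lemma \ref{ctble-unctble-sups}, show that $W$ lies there by composing its $d_W$-uniformly continuous version with the continuous identity map $j$, and then restrict the CLT from $\ell_{\infty}(E\times I)$ to the closed subspace (the paper justifies this last step by the portmanteau theorem, citing Theorem 1.3.10 of \cite{vw}). The two-sided application of (\ref{ctble-unctble-4}) that you flag as the delicate point in case (i) is exactly what the paper relies on, though it states it only implicitly by appealing to the proof of Lemma \ref{ctble-unctble-sups}.
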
 

\begin{proof}
If $\{X_t: t \in E\}$ has a version with paths in $\mathbb{C}_{e_1}(E)$, then taking i.i.d. copies of this continuous version to build the quantile process, the proof of Lemma 5.3 implies one has with probability one that $W_n(t,\alpha)$ is continuous on $( E,e_1)$ for each $\alpha \in (0,1)$. In addition, for each $n \ge 1$, by Lemma 5.3 we have $\alpha \rightarrow \tau_{\alpha}^n(t)$ is in $ \mathbb{D}_2(I)$ for all $t\in E$, and therefore 
\begin{equation}\label{quant-CLT-functions-6}
P(W_n(\cdot,\cdot) \in \mathbb{C}_{e_1}(E) \otimes\mathbb{D}_2(I))=1.
\end{equation}
Moreover, since we are assuming the identity map $j$ is continuous from the $e_1 \times e_2$ topology to the $d_W$ topology on $E\times I$, with $W$ having a version with paths in $C_{L_2}(E \times I)$, it follows from the fact that the composition of continuous maps is continuous that 
$W$ has a version such that
\begin{equation}\label{quant-CLT-functions-7}
P(W(\cdot, \cdot) \in \mathbb{C}_{e_1}(E) \otimes\mathbb{D}_2(I))=1.
\end{equation}
Combining (\ref{quant-CLT-functions-6}) and (\ref{quant-CLT-functions-7}), an easy application of the portmanteau theorem implies (see Theorem 1.3.10 of \cite{vw} for the details) the CLT will hold on $\mathbb{C}_{e_1}(E) \otimes\mathbb{D}_2(I)$ with the topology that is given by the sup-norm. Thus (i) holds.

The proof of (ii) is entirely similar, since the assumptions of (ii) and Lemma \ref{ctble-unctble-sups} imply that (\ref{quant-CLT-functions-6}) holds with $\mathbb{C}_{e_1}(E) \otimes\mathbb{D}_2(I)$ replaced by $\mathbb{D}_1([0,T])\otimes\mathbb{D}_2(I)$. Moreover, since we always have 
$$
\mathbb{C}_{e_1}([0,T]) \otimes\mathbb{D}_2(I)\subseteq \mathbb{D}_1([0,T])\otimes\mathbb{D}_2(I),
$$
and the argument for (\ref{quant-CLT-functions-7}) is valid under (ii), we have that (\ref{quant-CLT-functions-6}) holds with this replacement.
Hence (ii) is verified as before.
\end{proof}

\begin{rem}\label{median-CLT-near-zero-2}

In this remark we provide specific applications of Theorem \ref{quant-CLT-paths}. Our first application assumes the input process $X$ is a cadlag symmetric r-stable process with stationary independent increments on $[0,T]$ with $P(X(0)=1$, and shows that under these conditions the empirical quantile CLT holds in the Banach space $\mathbb{D}_1([0,T])$ with the sup-norm. The special case r=2 implies $X$ is Brownian motion, and if the quantile CLT is built from i.i.d sample continuous Brownian motions, then we will also see that for fixed $\alpha \in (0,1)$ the empirical quantile CLT holds in the Banach space $\mathbb{C}_{e_1}([0,T])$ with the sup-norm. As we mentioned earlier, this implies the quantile CLT for medians in  \cite{swanson-scaled-median}, and for other individual quantile levels $\alpha \in (0,1)$ in \cite{swanson-fluctuations}. A major step in these results will be the use of Theorem \ref{quant-CLT-near-zero} to establish the empirical CLT. 

In the second application the input  process $X$ is a sample continuous fractional Brownian motion, and here for fixed $\alpha \in (0,1)$ we again have the empirical quantile CLT in $\mathbb{C}_{e_1}([0,T])$ with sup-norm. However, for this class of examples we will only outline the necessary arguments as they are much the same as those for the stable processes. Hence we now turn to that case.

First we observe that if $X$ is a cadlag symmetric r-stable process with stationary independent increments on $[0,\infty]$ with $P(X(0)=1$, then for $T>0$ fixed and $I$ a closed subinterval of $(0,1)$ we have the empirical quantile CLT of Theorem \ref{quant-CLT-near-zero}.
Furthermore, since $\{X_{ct}\colon\  t  \ge 0\}$ is equal in distribution to  $c^{\frac{1}{r}}\{X_t\colon\  t  \ge 0\}$ for $c>0$, it easily follows that  $\tau_{\alpha}(t)= t^{\frac{1}{r}} \tau_{\alpha}(1)$ is jointly continuous in $(t,\alpha) \in [0,\infty) \times (0,1)$. Hence Theorem \ref{quant-CLT-paths} implies the claims made above for the stable process inputs provided we show the identity map $j$ on $[0,T]\times I$ is continuous from the Euclidean topology to the $d_W$ topology on $[0,T] \times I$, where (\ref{quant-CLT-near-zero-14}) and  $(s,\beta), (t,\alpha) \in [0,T]\times I$ implies the $L_2$ distance
$d_W$ is given by
\begin{align*}
d_W^2((s,\beta),(t,\alpha)) &= \frac{s^{\frac{2}{r}}(\beta -\beta^2)}{f^2(1,\tau_{\beta}(1))} + \frac{t^{\frac{2}{r}} (\alpha -\alpha^2)}{f^2(1, \tau_{\alpha}(1))}\\
&\quad -
2s^{\frac{1}{r}}t^{\frac{1}{r}}  \frac{[P(X_s \le \tau_{\beta}(s),X_t \le \tau_{\alpha}(t)) - \alpha\beta]}{f(1,\tau_{\beta}(1))f(1, \tau_{\alpha}(1))}.
\end{align*}

Hence, fix $(t,\alpha) \in [0,T] \times I$, and assume $e((s,\beta),(t,\alpha))=e_1(s,t)+e_2(\alpha,\beta) \rightarrow 0$. Then, if $t=0$ it is obvious that the identity map $j$ is continuous at $(0,\alpha), \alpha \in I$, as asserted, i.e.  $e((s,\beta),(0,\alpha)) \rightarrow 0$ implies  $d_W((s,\beta),(0,\alpha)) \rightarrow 0$ since the density $f(1,x)$ is strictly positive and continuous in $x \in \mathbb{R}$ and $\tau_{\beta}(1)\rightarrow \tau_{\alpha}(1)$ as $\beta \rightarrow \alpha>0$. Moreover, for $t \in (0,T]$ and $\alpha \in I$ fixed, the continuity is again obvious provided we show  $e((s,\beta),(t,\alpha)) \rightarrow 
 0$ implies
\begin{equation}\label{quant-CLT-near-zero-41}
P(X_s \le \tau_{\beta}(s),X_t \le \tau_{\alpha}(t)) \rightarrow \alpha.
\end{equation}

To verify (\ref{quant-CLT-near-zero-41}) we observe that if $A_n$ and $B_n$ are sequences of random variables on the same probability space with $\rm {plim}_{n \rightarrow \infty} A_n=A$ and $\rm {plim}_{n \rightarrow \infty}B_n=B$, then the random vector $(A_n,B_n)$ converges to $(A,B)$ in probability, and $P(A_n \le x,B_n \le y)$ converges uniformly to $P(A\le x,B \le y)$ for $(x,y) \in \mathbb{R}^2$ provided $(A,B)$ has a continuous distribution function. In particular, if $A=B$, then $(A_n,B_n)$ converges in probability to $(A,A)$,
and  $P(A_n \le x,B_n \le y)$ converges uniformly to $P(A\le x,B \le y)=P(A \le x \wedge y)$ for $(x,y) \in \mathbb{R}^2$ provided $A$ has a continuous distribution function. Now $X_t$ is continuous in probability on $[0,T]$, and $\tau_{\beta}(s)= s^{\frac{1}{r}}\tau_{\beta}(1)$ is jointly continuous in $s \in [0,T]$ and $\beta \in (0,1)$, so setting $A_n=A=X_t$ with $ t>0, \alpha \in I$ fixed, and $B_n=X_{s_n}, s_n  \rightarrow t, \beta_n \rightarrow \alpha, B=X_t$, we then have
$$
\lim_{s_n \rightarrow t} P(X_{s_n} \le \tau_{\beta_n}(s_n), X_t \le \tau_{\alpha}(t))=P( X_t \le \tau_{\alpha}(t))=\alpha.
$$
Since the sequences $s_n$ and $\beta_n$ with the stated properties are arbitrary, we have  (\ref{quant-CLT-near-zero-41}). Thus the claims regarding the stable processes of this remark are established.

Now we turn to the application of Theorem \ref{quant-CLT-paths} to fractional Brownian motions. Hence let  $\{X_t\colon\  t  \ge 0\}$ be a centered sample continuous $\gamma$-fractional Brownian motion for $0<\gamma< 1$ such that $X_0=0$ with probability one and $\E(X_t^2)= t^{2\gamma}$ for $t \ge 0$. 
Take $I$ a closed subinterval of $(0,1)$, and assume the empirical quantile processes built from i.i.d. copies of $X$ with continuous paths are given for $t \ge 0, \alpha \in (0,1), n \ge 1,$ as in (\ref{quant-CLT-near-zero-1}). Since $\{X_{ct}\colon\  t  \ge 0\}$ is equal in distribution to  $c^{\gamma}\{X_t\colon\  t  \ge 0\}$ for $c>0$, it follows that  $\tau_{\alpha}(t)= t^{\gamma} \tau_{\alpha}(1)$ is jointly continuous in $(t,\alpha) \in [0,\infty) \times (0,1)$. Moreover, the analogue of the argument given above for stable processes implies the identity map $j$ on $[0,T] \times I$ is continuous from the Euclidean topology to the $L_2$ distance $d_W$ on $[0,T] \times I$, where in this situation 
\begin{equation}\label{quant-CLT-near-zero-47}
d_W^2((s,\beta),(t,\alpha)) = 2\pi \bigl\{s^{2\gamma}  \exp\{\tau_{\beta}^2(1)\}(\beta -\beta^2) + t^{2\gamma} \exp\{\tau_{\alpha}^2(1)\}(\alpha -\alpha^2)-~~~~~~~~~
\end{equation}
$$
~~~~~~~~~~~~~~~~~~~~~~~~2s^{\gamma}t^{\gamma}  \exp\{\frac{1}{2}(\tau_{\alpha}^2(1) + \tau_{\beta}^2(1)) \}[P(X_s \le \tau_{\beta}(s),X_t \le \tau_{\alpha}(t)) - \alpha\beta]\bigr\}
$$
for $(s,\beta), (t,\alpha) \in [0,T]\times I$. Hence we will have empirical CLT results as in part (i) of Theorem \ref{quant-CLT-paths} with $E=[0,T]$, provided we can show we that Theorem \ref{quant-CLT-near-zero}
applies when the input process $X$ is a fractional Brownian motion. 

Except for $\gamma=\frac{1}{2}$, $X$ does not have independent increments, 
but the results of Theorem \ref{tail bounds} still hold in this setting. Moreover, from the proof of Lemma \ref{ctble-unctble-sups} the empirical processes are such that
\begin{equation}\label{quant-CLT-functions-50}
P(W_n(\cdot,\cdot) \in \mathbb{C}_{e_1}(E) \otimes\mathbb{D}_2(I))=1.
\end{equation}
Furthermore, Corollary \ref{prob-cont-zero} and Theorem \ref{quant-CLT-near-zero} hold when the input processes are sample continuous fractional Brownian motions, and hence the proof of Theorem \ref{quant-CLT-near-zero} implies the
empirical quantile CLT in $\ell_{\infty}([0,T]\times I)$ in this setting. The limiting Gaussian process $W(t,\alpha)$ has covariance as in (\ref{quant-CLT-near-zero-4}) for $s,t \in (0,T]$ and zero for $s$ or $t$ equal to zero, and hence the covariance for $W$
is as given in (\ref{quant-CLT-near-zero-47}). Hence the empirical quantile CLT results in part (i) of Theorem \ref{quant-CLT-paths} hold as indicated with $E=[0,T]$ and a fractional Brownian motion as the input process.

In particular, if $\alpha \in (0,1)$ is fixed, then the CLT would hold in the space of continuous functions on $[0,T]$ with the topology that given by the sup-norm. The limiting Gaussian process then has covariance as in (\ref{quant-CLT-near-zero-47}) with $\beta=\alpha$. Hence if $\alpha=\beta=\frac{1}{2}$ and $s,t \in (0,T]$, the covariance of the limiting Gaussian process is
$$
\E(W(s,\frac{1}{2})W(t,\frac{1}{2})) = \frac{P(X(s) \le 0,X(t) \le 0)- \frac{1}{4}}{f(s,0)f(t,0)}= s^{\gamma} t^{\gamma} \sin^{-1}(\frac{\E(X_sX_t)}{s^{\gamma}t^{\gamma}})
$$
where $2\E(X_sX_t)= s ^{2\gamma} + t^{2\gamma} - |s -t|^{2\gamma}$, and the second equality follows from a standard Gaussian identity. When $\gamma=1/2$ and the input process is standard Brownian motion, this gives the covariance in \cite{swanson-scaled-median}.
Thus the claims of this remark are established.
\end{rem}

\begin{rem}
The Brownian sheet also has a scaling property, and continuous paths. Hence by using the methods of the previous remark, results of the type discussed there should also hold for the sheet. We have checked these results when the input process is the 2-parameter sheet on $[0,T]\times[0,T]$, but the details differ very little from what is done in the previous remark, so they are not included.
\end{rem}

\bibliographystyle{amsalpha} 

\bibliography{mybib5-08-11,othbib8-23-11}
\end{document}